\documentclass[DIV=13]{scrartcl}
\usepackage{amsmath,amsthm,enumitem,upref}
\usepackage{mathtools}
\usepackage{microtype}
\usepackage[sortcites=true,giveninits=true,maxbibnames=10, backend=biber]{biblatex}
\AtEveryBibitem{\clearfield{issn}\clearfield{doi}\clearfield{urldate}}
\AtEveryCitekey{\clearfield{issn}\clearfield{doi}\clearfield{urldate}}
\renewrobustcmd*{\bibinitdelim}{\,}
\DeclareSourcemap{
  \maps[datatype=bibtex]{
    \map[overwrite]{
      \step[fieldsource=shortjournal,fieldtarget=journaltitle]
    }
  }  
}
\theoremstyle{definition}
\newtheorem{definition}{Definition}[section]
\newtheorem{example}[definition]{Example}
\newtheorem{remark}[definition]{Remark}
\newtheorem{corollary}[definition]{Corollary}
\theoremstyle{plain}
\newtheorem{theorem}[definition]{Theorem}
\newtheorem{lemma}[definition]{Lemma}
\newtheorem{proposition}[definition]{Proposition}
\newcommand{\C}{\mathbf{C}}
\newcommand{\R}{\mathbf{R}}
\newcommand{\N}{\mathbf{N}}

\newcommand{\Ll}{\mathrm{L}}
\newcommand{\Cc}{\mathrm{C}}
\newcommand{\LL}{\mathcal{L}}
\newcommand{\Ee}{\mathrm{E}}
\newcommand{\e}{\mathrm{e}}
\newcommand{\dd}{\,\mathrm{d}}
\newcommand{\Abs}[1]{\!\left| #1 \right|}
\newcommand{\abs}[1]{| #1 |}
\newcommand{\Norm}[1]{\!\left\lVert #1 \right\rVert}
\newcommand{\norm}[1]{\lVert #1 \rVert}
\newcommand{\spr}[2]{\langle #1 , #2 \rangle}
\newcommand{\Spr}[2]{\left\langle #1 , #2 \right\rangle}
\newcommand{\ISS}{\textls[50]{\textsc{iss}}}
\newcommand{\WCG}{\textls[50]{\textsc{wcg}}}
\newcommand{\iISS}{{\footnotesize i}\kern.03em{}\ISS{}}
\renewcommand{\epsilon}{\varepsilon}
\DeclareMathOperator{\id}{id}
\DeclareMathOperator{\ran}{ran}
\newcommand{\ball}{\mathrm{B}}
\newcommand{\wuc}{w.u.C.{}}
\newcommand{\uc}{u.c.{}}
\newcommand{\czero}{$\Cc_0$}
\setlist{itemsep=0.2ex, parsep=0.4ex}
\date{\today}
\author{\stepcounter{footnote}Philip Preußler\thanks{Mathematics of Systems Theory, Department of Applied Mathematics, University of Twente, P.\,O.\ Box 217, 7500~AE  Enschede, The Netherlands. E-mail: \texttt{p.n.preusler@utwente.nl}} \ and Felix L.\ Schwenninger\thanks{Mathematics of Systems Theory, Department of Applied Mathematics, University of Twente, P.\,O.\ Box 217, 7500~AE  Enschede, The Netherlands. E-mail: \texttt{f.l.schwenninger@utwente.nl}}}
\title{Implications of structured\\ continuous maximal regularity}
\begin{document}
\maketitle
\vspace{-4.5ex}
\begin{abstract}
    \noindent\textbf{Abstract.}
    We study how maximal regularity estimates with respect to the continuous functions improve automatically in cases where the spatial norm is fundamentally different from the supremum norm. 
    More precisely, we invoke properties such as weak compactness of convolution-type operators related to the mild solutions of the underlying linear evolution equations to sharpen the a priori estimates. These results have several applications: such as a new proof of Guerre-Delabriere's result on $\Ll^1$-maximal regularity and an extension of Baillon's theorem; a simplification for well-known perturbation theorems for generation of \czero-semigroups; and we resolve an open problem on input-to-state stability from control theory for a general abstract
    class of systems.
\end{abstract}\vspace{3ex}
\noindent\textbf{Mathematics Subject Classification (2020):}
47D06, 34G10, 93C05\\
\vspace{-2ex}\\
\noindent\textbf{Keywords:} maximal regularity, admissible operators, input-to-state stability, perturbations of operator semigroups, Baillon's theorem, infinite-dimensional control theory
\section{Introduction}\label{sec:intro}
Well-posedness and regularity of (nonlinear) evolution equations, modeled via operator semigroups, are closely related to properties of underlying convolution-type operators that formally arise from the variation of constants formula. In this context, the interplay of temporal and spatial norms is crucial to apply, e.g., fixed-point arguments or perturbation results. In this paper we deal with the question whether a priori estimates with respect to limit case time norms, such as $\Ll^1$ or $\Ll^{\infty}$, can be sharpened to stronger estimates, taking into account structural assumptions on the geometry of the Banach space. Our abstract results further impact several domains in partial differential equations such as questions from control theory and maximal regularity.

To explain the setting, let $A$ generate a strongly continuous semigroup $T$ on a Banach space $X$. Operator semigroups have a decades-long tradition in the structural study of evolution equations; see, for example, \cite{pazySemigroupsLinearOperators1983,arendtVector-valuedLaplaceTransforms2011,engel_one-parameter_2000}. While the theory is fundamentally built up for linear problems given in the abstract form $\dot{x}(t)=Ax(t)$, the setting readily allows for the treatment of nonlinear equations; usually through rephrasing nonlinearities by means of fixed-point arguments. Yet, the properties of the linear dynamics are usually essential to apply these. 

Let $U$ be a Banach space, $B_{0}\colon U\to X$ be a bounded linear operator and let $\tau > 0$. Consider the singular convolution-type operator
\[\Phi\colon f \mapsto \Phi(f)\coloneq A \int_{0}^{\,\cdot\,}T(\,\cdot\,-s)B_{0}f(s)\dd s,\] 
which is (well-)defined for smooth $f\colon[0,\tau]\to U$. 
For $U=X$ and $B_{0}=\id_{X}$, the boundedness of $\Phi$ with respect to $\mathrm{Z}=\Ll^{p}([0,\tau];U)$, $p\in[1,\infty)$, or  $\mathrm{Z}=\Cc([0,\tau];U)$ refers to $\mathrm{Z}$-maximal regularity of the inhomogeneous equation 
\[\dot{x}(t)=Ax(t)+f(t),\qquad x(0)=0\in X,\ t\in [0,\tau].\]
Maximal regularity has seen particularly wide application in the study of nonlinear parabolic equations, see, e.g., \cite{dapratoEquationsdEvolutionAbstraites1979,denkRboundedness2003,prussMovingInterfacesQuasilinear2016,monniauxMaximalRegularity2024}, with a strong emphasis on $\Ll^{p}$-maximal regularity with $p\in(1,\infty)$. 
Here, we focus on \emph{structured} maximal regularity with respect to the \emph{continuous functions}; for short: \emph{structured continuous maximal regularity}; that is, the property of $A$ and $B_{0}$ that 
\[\Phi\colon \Cc([0,\tau];U)\to \Cc([0,\tau];X)\]
is a bounded operator. In this case we say that \emph{$A$ has $B_{0}$-structured $\Cc$-maximal regularity}. Our main interest lies in the question \emph{whether $\Phi$ extends boundedly to larger spaces} such as, e.g., $\Ll^{\infty}$. More precisely, we aim to find conditions on $U$ and $X$ (and possibly on $A$) such that $\Phi$ is bounded from some $\Ll^{p}([0,\tau];U)$ or some vector-valued Orlicz space $\Ee_F([0,\tau];U)$ to $\Cc([0,\tau];X)$. We emphasize that any such extension to $\Ll^p$ or $\Ee_F$ implicitly contains the assertion that
mild solutions to the above differential equation with inhomogeneities in $\Ll^p$ or $\Ee_F$ are continuous
functions with values in $X$.

At first glance, this problem may seem artificial, albeit interesting from an operator theoretic point of view. But already in the extremal case $U=X$ and $B_{0}=\mathrm{id}_{X}$, the situation is subtle. On the one hand, by Baillon's theorem \cite{baillonCaractere1980}, boundedness of $\Phi$ on $\Cc([0,\tau];X)$ implies that either $A$ is bounded or the space $X$ must contain an isomorphic copy of $c_{0}$. On the other hand, as was shown in \cite{jacobRefinementBaillonTheorem2022}, an extension to $\Ll^{\infty}$ (and hence to any other $\Ll^{p}$) is only possible if $A$ is bounded. Yet, the general case, where $B_{0}$ is not the identity, has a multitude of applications and was not addressed previously except in related special cases. The study of boundedness of $\Phi$ and related concepts are implicitly present in several fields of evolution equations, such as:
\begin{itemize}
    \item control of PDEs, where $f$ is, e.g., a (time-dependent) boundary datum on the spatial domain and $\Phi$ arises from a ``lifting'' of the boundary value problem to the domain,
\cite{tucsnak_observation_2009,mironchenkoInputtoStateStabilityInfiniteDimensional2020,staffans_well-posed_2005,haakKatoMethodNavier2009,haakWeightedAdmissibilityWellposedness2007,Jacob_2018,jacobContinuitySolutionsParabolic2019,byrnesRegularLinearSystems2002,arora2026inputtostatestabilityintegralnorms};
    \item perturbation results for semigroups, such as the Desch--Schappacher theorem \cite{deschSomeGenerationResults1989}, cf.\ also \cite{aroraLimitcaseAdmissibilityPositive2025a, barbieriPerturbationsPositiveSemigroups2025, barbieriOnStructuredPerturbations2025, batkaiPerturbationsPositiveSemigroups2018, wintermayrPositivityPerturbationTheory2019a,engel_one-parameter_2000,kunstmannPerturbationTheorems2001,adlerOnPerturbations2014};
    \item (continuous) maximal regularity, \cite{baillonCaractere1980,guerre-delabriere$L_p$regularityCauchyProblem1995, jacobRefinementBaillonTheorem2022, kaltonREMARKSL1MAXIMAL2008,lecroneContinuousMaximalRegularity2011,riMaximalL1Regularity2022,dapratoEquationsdEvolutionAbstraites1979, danchinFreeBoundaryProblems2025,lecroneOnquasilinear2020,hytonenAnalysisBanachSpaces2018,clementMaximalRegularity2001,arora2026inputtostatestabilityintegralnorms}, see also the references in \cite{lecroneContinuousMaximalRegularity2011};
     \item well-posedness of parabolic and hyperbolic semilinear equations (fixed-point arguments); where $U$ is typically
    a domain of some fractional power of the generator or another interpolation space \cite{pazySemigroupsLinearOperators1983, hosfeldInputToStateStabilityforBilinear2024, lecroneOnquasilinear2020,lunardi2018,dapratoEquationsdEvolutionAbstraites1979,hytonenAnalysisBanachSpaces2018,clementMaximalRegularity2001,danchinFreeBoundaryProblems2025}, see also the references in 
    \cite[Ch.\ 17.5]{hytonenAnalysisBanachSpaces2018}.
\end{itemize}
The common implicit feature among the above topics and the point of departure of this work is the following characterization of structured continuous maximal regularity
and the boundedness of the point evaluations of $\Phi$:
\begin{proposition}[Theorem \ref{thm:structuredmaxreg}]
        Let $\tau > 0$ and let $A$ generate a \czero-semigroup $(T(t))_{t\ge0}$ on the Banach space $X$. Let also $\delta_\tau$ be the usual evaluation map $f\mapsto f(\tau)$. If $U$ is another Banach space and $B_0 \in \LL(U,X)$, then $A$ has 
    $B_0$-structured $\Cc$-maximal regularity if and only if $\Phi_\tau \coloneq \delta_\tau\circ \Phi$, i.e., the map
    \[
        f \mapsto \Phi_{\tau} f = A\int_0^\tau T(\tau - s)B_0f(s)\dd s
    \]
    is a bounded map $\Cc([0,\tau];U) \to X$.
\end{proposition}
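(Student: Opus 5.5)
The implication from $B_0$-structured $\Cc$-maximal regularity to boundedness of $\Phi_\tau$ is immediate, since $\Phi_\tau=\delta_\tau\circ\Phi$ and $\delta_\tau$ is bounded on $\Cc([0,\tau];X)$. The work lies in the converse. We first need a precise meaning of $\Phi_\tau$ on all of $\Cc([0,\tau];U)$. The plan is to define it on a dense subspace, for instance $\Cc^1([0,\tau];U)$, where $\int_0^t T(t-s)B_0f(s)\dd s\in D(A)$ by the standard regularity of mild solutions. There the identity
\[
A\int_0^t T(t-s)B_0f(s)\dd s = T(t)B_0f(0)-B_0f(t)+\int_0^t T(t-s)B_0f'(s)\dd s
\]
holds, and it shows that $t\mapsto \Phi f(t)$ is continuous. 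Boundedness of $\Phi_\tau$ then means $\norm{\Phi_\tau f}\le M\norm{f}_\infty$ on this subspace, and $\Phi_\tau$ extends by density. Alternatively, we use closedness of $A$ to interpret the formula directly.

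\emph{Step 1: uniform bound for $\Phi_t$, $t\le\tau$.} Fix $t\in(0,\tau]$ and $f\in\Cc^1([0,t];U)$. Extend $f$ to $g\in\Cc^1([0,\tau];U)$ by $g(s)=0$ for $s\le\tau-t$ and $g(s)=f(s-\tau+t)$ afterwards. To keep $g$ continuous we either pre-multiply by a cutoff near $s=0$, or we handle the value $f(0)$ separately using $f-f(0)$. A change of variables gives $\Phi_\tau g=\Phi_t f$, so $\norm{\Phi_t f}\le M\norm{f}_\infty$.

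The constant $f\equiv u$ needs separate treatment. Here $\Phi_t u=(T(t)-I)B_0u$, which is bounded by $(\sup_{[0,\tau]}\norm{T}+1)\norm{B_0}\norm u$. Writing $f=(f-f(0))+f(0)$ with $f-f(0)$ vanishing at $0$, the zero-extension is continuous, and we obtain $\sup_{t\le\tau}\norm{\Phi_t f}\le M'\norm f_\infty$. Hence $\norm{\Phi f}_{\Cc([0,\tau];X)}\le M'\norm f_\infty$ on $\Cc^1$.

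\emph{Step 2: extension and continuity.} For general $f\in\Cc([0,\tau];U)$, choose $f_n\in\Cc^1$ with $f_n\to f$ uniformly. The functions $\Phi f_n$ are continuous, and by Step 1 they form a Cauchy sequence in $\Cc([0,\tau];X)$. Their limit is therefore continuous, and it coincides with $\Phi f$ pointwise. Indeed, $\int_0^t T(t-s)B_0f_n\dd s$ converges to the corresponding integral for $f$, and since $A$ is closed, the integral for $f$ lies in $D(A)$ with $A$ applied to it equal to the limit. Thus $\Phi$ maps $\Cc([0,\tau];U)$ boundedly into $\Cc([0,\tau];X)$.

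\emph{Main obstacle.} The key point is the shift argument in Step 1, specifically keeping the zero-extension continuous and controlling the constant part. Both are handled by the splitting $f=(f-f(0))+f(0)$ together with the explicit formula for $\Phi_t$ on constants. A secondary subtlety is ensuring that $\Phi_\tau$ is well-defined on all continuous $f$ in the statement. The closedness-of-$A$ argument in Step 2 settles this.
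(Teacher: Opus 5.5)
Your proof is correct, and it takes a genuinely different and considerably more elementary route than the paper.

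\textbf{The paper's route.} The paper proves the converse through the appendix Theorem~\ref{thm:structuredmaxreg}. It first invokes the characterization of $\Cc$-admissibility of $AB_0$ by bounded semivariation of $t\mapsto T(t)B_0$, taken from the sun-duality paper. Following Travis, it then writes $\Phi f(t)=-\int_0^t \dd_s T(t-s)B_0f(s)$ as an operator-valued Riemann--Stieltjes integral. Right and left continuity in $t$ are checked separately. For left continuity a new splitting $K_1+K_2-K_3$ is needed, because $T(\,\cdot\,)$ itself need not have bounded semivariation. The uniform bound comes from H\"onig's inequality applied with the semivariation.

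\textbf{Your route.} You replace all of this with three soft facts.
\begin{enumerate}
    \item Translation invariance of the convolution. Your zero-extension of $f-f(0)$, together with the explicit formula on constants, gives $\sup_{t\le\tau}\norm{\Phi_t}\le 2M+(\sup_{[0,\tau]}\norm{T}+1)\norm{B_0}$.
    \item The integration-by-parts formula on $\Cc^1$ inputs, which makes $\Phi f$ visibly continuous there.
    \item Closedness of $A$, combined with the fact that uniform limits of continuous functions are continuous.
\end{enumerate}
This argument is self-contained and needs no external results on semivariation or Stieltjes integration. It also sidesteps the left-continuity difficulty entirely. In return, the paper's route yields the explicit Stieltjes representation of $\Phi f$ and the link to the bounded-semivariation characterization, which fits its admissibility framework.

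\textbf{One small correction.} The zero-extension $g$ of $f-f(0)$ is continuous and piecewise $\Cc^1$, but it is not $\Cc^1$: it has a corner at $\tau-t$ unless $f'(0)=0$. There are two easy fixes.
\begin{itemize}
    \item Run Step~1 only after you have extended $\Phi_\tau$ to all of $\Cc([0,\tau];U)$ via your closedness argument. The change of variables then gives $\Phi_\tau g=\Phi_t f$ directly for every continuous $f$ with $f(0)=0$.
    \item Alternatively, note that your integration-by-parts formula also holds for piecewise $\Cc^1$ inputs.
\end{itemize}
The cutoff alternative you mention would not work as stated, since it alters $f$ and breaks the identity $\Phi_\tau g=\Phi_t f$. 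The splitting $f=(f-f(0))+f(0)$ is the right device.
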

    The case $U = X$ and $B_0 = \id_X$ is found in Travis \cite[Prop.\ 3.1, Proof of Prop.\ 3.1]{travisDifferentiabilityWeakSolutions1981}; see also \cite[Prop.\ 2.2]{jacobRefinementBaillonTheorem2022}. In \cite[Cor.\ 5.4]{kruseContinuousMaximalRegularity2025}, the authors show this equivalence assuming that $B_0$ commutes with the semigroup. We provide a full proof of the general result in the appendix as Theorem \ref{thm:structuredmaxreg}. The proposition also shows the equivalence between structured maximal regularity and the notion of \emph{admissible operators} with respect to the continuous functions. Indeed, the property that the operator $\Phi_{\tau}$ is bounded means precisely that $A_{-1}B_{0}$ is $\Cc$-admissible, see Section \ref{sec:B_operators} for details.

We show that operators $\Phi$ from above do indeed extend to bounded operators with respect to stronger time norms if the
underlying Banach space $X$ fulfills certain geometric assumptions.
More precisely, we prove that under conditions such as $X$ 
being reflexive or $X$ having nontrivial Rademacher type, any $\Phi$ bounded on the continuous functions
extends to a bounded operator from some Orlicz space. These results
provide insight into and answer a series of open questions in the context of the aforementioned applications. For instance,
we show that the contractivity assumption $\limsup_{\tau\searrow 0}\norm{\Phi_{\tau}}<1$ in classical perturbation results on operator semigroups, cf., e.g., \cite[Cor.\ III.3.3]{engel_one-parameter_2000}, \cite{deschSomeGenerationResults1989}, is void for 
the previously mentioned classes of Banach spaces. On the 
other hand, we resolve a standing question on the equivalence
of integral input-to-state stability (for short: \iISS) and input-to-state stability (for short: \ISS) for a large class of
infinite-dimensional linear systems \cite{Jacob_2018,jacobContinuitySolutionsParabolic2019}. This result, which is
trivial in finite dimensions, had previously only been shown
for specific classes of semigroups \cite{jacobContinuitySolutionsParabolic2019}. Finally, our results
partially extend classical theorems due to Baillon \cite{baillonCaractere1980} and Guerre-Delabriere \cite{guerre-delabriere$L_p$regularityCauchyProblem1995} on maximal regularity with respect to the 
continuous functions resp.\ the absolutely integrable functions.
\subsection{Literature overview}
The problem of extending operators such as $\Phi$ has been treated previously in 
special situations. Structured maximal regularity as the main feature of the abstract setup and the generality
of our results are novel. In the context of control theory, there are preexisting results. We provide a summary of the previous work in this area from the literature.

For $U$ finite-dimensional and for analytic semigroups, generalizing earlier results from \cite{jacobOnInputToStateStability2016} and \cite{Jacob_2018} on diagonal semigroups, the article \cite{jacobContinuitySolutionsParabolic2019} considers such extensions from $\Ll^\infty$ to Orlicz spaces under
assumptions on the generator $A$, such as square function estimates or boundedness of 
the $\mathrm{H}^\infty$-calculus in the Hilbert space setting, see \cite[Prop.\ 6, Thm.\ 7, Cor.\ 8]{jacobContinuitySolutionsParabolic2019}. The equivalence of integral \ISS{} and \ISS{}
under these assumptions is also noted in \cite[Cor.\ 21]{jacobContinuitySolutionsParabolic2019}. 

In \cite{jacob2026laplacecarlesonembeddingsinfinitynormadmissibility},
the authors study boundedness of so-called Laplace--Carleson embeddings and the connection to mappings such as $\Phi$. In particular, for diagonal semigroups and finite-dimensional $U$, the authors show in
\cite[Thm.\ 3.6, Cor.\ 3.7]{jacob2026laplacecarlesonembeddingsinfinitynormadmissibility} that boundedness of $\Phi$ on $\Ll^\infty$ always implies boundedness of $\Phi$ on a suitable Orlicz space. Moreover, in \cite[Thm.\ 3.3]{jacob2026laplacecarlesonembeddingsinfinitynormadmissibility}, the authors even recover $\Ll^2$-admissibility
from $\Ll^\infty$-admissibility if $X$ and $U$ are Hilbert spaces and $A$ generates a \czero-\emph{group} on $X$; this is shown by applying previous results by Weiss and Hansen--Weiss.

The relation between admissibility and Baillon-type results was previously considered in \cite{jacobRefinementBaillonTheorem2022}. This includes the result \cite[Thm.\ 2.9]{jacobRefinementBaillonTheorem2022} that, in the case $U = X$ and $B_ 0=\id_X$, the operator $\Phi$, when assumed to be bounded on $\Cc([0,\tau];X)$, can only extend to $\Ll^\infty$ if the generator $A$ is bounded.

Results related to the equivalence of $B_0$-structured maximal continuous regularity and $\Cc$-admissibility have been shown in \cite[Cor.\ 5.4]{kruseContinuousMaximalRegularity2025}; 
however, under stronger assumptions such as commutation of $B_0$ with the semigroup. For $U = X$ and $B_0 = \id_X$, the setting of classical $\Cc$-maximal regularity, \cite{travisDifferentiabilityWeakSolutions1981} contains a proof 
of the equivalence.

The equivalence of $\Ll^\infty$-\ISS{} and $\Ll^\infty$-\iISS{} for parabolic diagonal systems with $B \in \LL(\C,X_{-1})$, $X$ a Hilbert space, has appeared in \cite[Thm.\ 3.1]{jacobOnInputToStateStability2016}. In \cite[Thm.\ 4.1, Rem.\ 4.3]{Jacob_2018} the same group of authors shows that the class of parabolic diagonal systems with respect to $q$-Riesz bases on Banach spaces $X$ has the property that $\Phi$ operators bounded on $\Ll^\infty$ always extend to some Orlicz space, and that the equivalence of \ISS{} and \iISS{} also holds here. Moreover, they also remark \cite[Rem.\ 4.3]{Jacob_2018}
that the $\Delta_2$ condition, a growth condition on the Young function associated to the
Orlicz space, can not be recovered in some cases; as this would already imply an extension 
to some $\Ll^p$.

Recently, results leading to admissibility with respect to the continuous functions (or $\Ll^\infty$) have been derived in 
the special context of positive semigroups. 
In \cite[Thm.\ 4.2.7]{wintermayrPositivityPerturbationTheory2019a} it was shown that 
that for Banach lattices $X$, $\dim U < \infty$ and positive $B\in\LL(U,X_{-1})$ \emph{zero-class} $\mathrm{Z}$-admissibility (cf.\ Section \ref{sec:B_operators} below) follows from $\mathrm{Z}$-admissibility for $\mathrm{Z} = \Cc$ or $\mathrm{Z} = \Ll^\infty$.

Also, in \cite[Thm.\ 4.5]{aroraLimitcaseAdmissibilityPositive2025a}
the authors show that zero-class admissibility with respect to $\Cc$ follows from $\Cc$-admissibility under assumptions related to the order of $X$ and an assumption on the range of $B$. Various 
sufficient conditions implying forms of $\Ll^\infty$- or $\Cc$-admissibility using different assumptions 
can also be found throughout \cite[Sect.\ 4.2]{aroraLimitcaseAdmissibilityPositive2025a}. Dropping
the order structure on $X$, the authors also obtain \cite[Thm.\ 4.12]{aroraLimitcaseAdmissibilityPositive2025a}, which is zero-class $\Ll^\infty$-admissibility from $\Ll^\infty$-admissibility when $X$ is reflexive and $U$ is an AM-space. Also, for $\dim U < \infty$ and $X$ reflexive,
\cite[Thm.\ 4.13]{aroraLimitcaseAdmissibilityPositive2025a} is the statement that $\Cc$-admissibility
always improves to zero-class $\Ll^\infty$-admissibility.
\subsection{Structure of the paper}
In the first section, we give a quick overview on notation and selected concepts that will be used throughout the paper.

Section 2 deals with a wide-ranging consequence of the classical de la 
Vallée Poussin theorem for linear operators $\Psi\colon W \to \Ll^1([0,1];Y)$ mapping into Lebesgue--Bochner
spaces. In particular, we note a folklore version of de la Vallée Poussin's theorem which states that uniform integrability of the image of the unit ball is necessary and sufficient for an operator, a priori bounded into an $\Ll^1$ space, to also be bounded with respect to a stronger Orlicz space norm. 
We comment on the application of weak compactness properties in this context, related to the Dunford--Pettis theorem. Also, we present several sufficient conditions on the involved Banach spaces
which yield the requisite uniform integrability property for any bounded operator.

The third section contains the main results of this article. The results are
phrased using the language of admissible operators in Banach spaces.
For operators $\Phi_\tau$ stemming from structured continuous maximal regularity as above, we use the framework of \emph{sun duality}, a duality
concept that is adapted to \czero-semigroups, to relate them to operators
mapping into $\Ll^1$ spaces. As $\Ll^{1}([0,\tau];U')$ is not the dual space of $\Cc([0,\tau];U)$, the duality arguments are subtle. The starting point rests on  \cite[Thm.\ 3.1]{aroraAdmissibleOperatorsSundual2025}, where the authors show that 
bounded $\Phi_\tau$ operators give rise to boundedness of a related, formally dual, operator
$\tilde\Psi$ defined on the sun dual space with values in a vector-valued 
$\Ll^1$ space. We then show that uniform integrability of the image under $\tilde\Psi$ of the unit ball in the sun dual space suffices to conclude
that the original operator $\Phi_\tau$ factorizes through an Orlicz space.

Moreover, we prove that weak compactness of $\Phi_\tau$ implies the uniform
integrability criterion for the associated formally dual operator. As in the
previous section, we discuss various other sufficient conditions, such as, again, reflexivity and nontrivial type.

In the final section, Section 4, we highlight applications
of the abstract results of Sections 2 and 3. We begin with admissibility for observation operators, arising from control theory, where we note that
the results of Section 2 directly translate into conditions where
$\Ll^1$-admissibility is automatically improved to Orlicz-admissibility. Next, we show that integral input-to-state stability is equivalent to input-to-state stability under the sufficient conditions of Section 3, which was an open problem for the last ten years. Then, 
we comment on the fact that an assumption on the perturbing operator can be dropped in the Desch--Schappacher theorem under assumptions on the Banach space. Further, we remark that Section 3 results lead to continuity of mild solutions for linear
systems with $\Ll^\infty$-admissible operators. We
also comment on the implications of positivity assumptions for Orlicz-admissibility and
note certain extensions of these results. As a final application, we present a novel 
proof of Guerre-Delabriere's result on $\Ll^1$-maximal regularity using the theory
from Section 2, while providing context on Baillon's theorem.

In the appendix, we collect some auxiliary statements.
\subsection{Terminology and notation}
Let $X$ be a Banach space. We write $X'$ for the dual space of $X$. If $x \in X$ and $x' \in X'$, we use any of the notations $\spr{x}{x'}_{X\times X'} \coloneq \spr{x'}{x}_{X'\times X} \coloneq x'(x)$ to refer
to the dual pairing. The closed unit ball of $X$ is denoted by
$\ball_X \coloneq \{ x \in X : \norm{x}_X \leq 1 \}$. Recall also that a subspace $M \subset X$ is called complemented if
there is a projection $P_M \in \LL(X)$ with $\ran P_M = M$, see, e.g.,
\cite[37]{wojtaszczykBanachSpacesAnalysts1991}.

If $Y$ is a second Banach space, we use the symbol 
$\LL(X,Y)$ to stand for the bounded linear operators from $X$ to $Y$. Recall that an
operator $T \in \LL(X,Y)$ is called weakly compact if the image $T(\ball_X)$ is a relatively
weakly compact subset of $Y$; cf., e.g., \cite[Def.\ VI.5.1]{conwayCourseFunctionalAnalysis2007}.
We say that an operator $T\in\LL(X,Y)$ factors through a third Banach space $W$ if there
are operators $Q\in \LL(X,W)$ and $R\in \LL(W,Y)$ such that $T = RQ$. 

Also, if $A$ is a (possibly 
unbounded) operator on $X$, then $R(\lambda,A) \coloneq (\lambda \id_X - A)^{-1}$ denotes the resolvent 
of $A$ for parameters $\lambda \in \rho(A)$, the resolvent set of $A$. If $A\colon X \supset D(A) \to X$ is the generator of a \czero-semigroup on $X$, then we use the shorthand $X_1$ to denote the
domain $D(A)$ equipped with the graph norm.

For a Banach space $Y$ and an interval $I \subset \R$ and $p \in [1,\infty]$, we denote by $\Ll^p(I;Y)$ the usual Lebesgue--Bochner space.
Lastly, we point out that we use $[0,1]$ generically as a bounded interval for notational convenience. The results
can also be stated for any other bounded interval with left endpoint $0$.
\section{Uniform integrability and linear maps into $\Ll^1$}\label{sec:operatorsintoL1}
Let $W$ and $Y$ be Banach spaces. We discuss the relation between \emph{uniform
integrability} of subsets of $\Ll^1$ spaces and conditions that ensure that
operators $\Psi \in \LL(W, \Ll^1([0,1];Y))$ factorize through a function space $\mathrm{Z}([0,1];Y) \subset
\Ll^1([0,1];Y)$ which is equipped with a stronger norm. We focus on the Orlicz function spaces; see Section \ref{sec:orlicz} in the sequel.
\subsection{Uniform integrability}\label{sec:ui}
Following \cite[101]{diestelVectorMeasures1977}, we say that $M \subset \Ll^1([0,1];Y)$ is called \emph{uniformly
    integrable} if for any sequence $(E_n)_{n\in\N}$ of 
    measurable subsets of $[0,1]$ with $\abs{E_n} \to 0$ we have 
    \[
    \lim_{n\to\infty} \sup_{f\in M} \int_{E_n} \norm{f(t)}_Y \dd t = 0.
    \]
Since the Lebesgue measure is non-atomic, 
uniform integrability of $M$ implies that $M$ is bounded in $\Ll^1$ norm; see, e.g., \cite[112]{albiacTopicsBanachSpace2016}. Since bounded subsets of $\Ll^1$ need not be uniformly integrable,
cf., e.g., \cite[Ex.\ 5.2.3 (iii)]{albiacTopicsBanachSpace2016}, uniform integrability is a 
strictly stronger property.
    
Due to the classical Dunford--Pettis theorem, there is a deep link between uniform integrability and
weak compactness in $\Ll^1$. For scalar $\Ll^1$ spaces, the theorem asserts that a subset $M$
of $\Ll^1([0,1])$ is relatively weakly compact
if and only if $M$ is uniformly integrable. See, e.g., \cite[Thm.\ 5.2.8]{albiacTopicsBanachSpace2016} or \cite[Thm.\ III.2.15]{diestelVectorMeasures1977} for  proofs. On passing to vector-valued spaces, the two properties are no longer equivalent. Uniform integrability of $M \subset \Ll^1([0,1];Y)$ 
still follows from relative weak compactness of $M$; see, e.g., \cite[Thm.\ IV.2.4]{diestelVectorMeasures1977} and the discussion on \cite[104]{diestelVectorMeasures1977}.
For general $Y$ the converse fails and additional
assumptions are needed to recover relative weak compactness of uniformly integrable sets.
See also \cite[p.~101~ff.]{diestelVectorMeasures1977}, \cite[p.~62~ff.]{diestelUniformIntegrabilityIntroduction1991} and the references
\cite{ulgerWeakCompactnessL11991, diestelWeakCompactnessL1muX1993, Diestel_1977} mentioned in \cite{diestelUniformIntegrabilityIntroduction1991} for more on weak compactness
in $\Ll^1([0,1];Y)$.
\subsection{Sufficient conditions}
In this section, we are concerned with uniform integrability of sets of the form $\Psi(\ball_W)$, where
$\Psi$ is a bounded operator $W \to \Ll^1([0,1];Y)$. Reflexivity of $W$ is a central sufficient condition:
\begin{proposition}\label{prop:reflexiveui}
  Let $W$ be a reflexive Banach space and $Y$ be another Banach space.
  If $\Psi \in \LL(W,\Ll^1([0,1];Y))$, then $\Psi$ is weakly compact and $\Psi(\ball_W)$
  is uniformly integrable.
\end{proposition}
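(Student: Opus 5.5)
The plan is to first establish weak compactness of $\Psi$ and then deduce uniform integrability from the vector-valued Dunford--Pettis direction recalled in Section~\ref{sec:ui}.

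\emph{Step 1: weak compactness.} Since $W$ is reflexive, Kakutani's theorem shows that $\ball_W$ is weakly compact. A bounded linear operator is weak-to-weak continuous, because for $y'\in \Ll^1([0,1];Y)'$ the functional $y'\circ\Psi$ lies in $W'$. Hence $\Psi(\ball_W)$ is weakly compact in $\Ll^1([0,1];Y)$, in particular relatively weakly compact, and $\Psi$ is weakly compact.

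\emph{Step 2: uniform integrability.} Here I will invoke the fact quoted in Section~\ref{sec:ui}: relatively weakly compact subsets of $\Ll^1([0,1];Y)$ are uniformly integrable \cite[Thm.\ IV.2.4]{diestelVectorMeasures1977}. Applied to $M=\Psi(\ball_W)$, this gives the claim.

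If a self-contained argument is preferred, I would argue by contradiction. Suppose there are $\epsilon>0$, sets $E_n$ with $\abs{E_n}\to0$, and $w_n\in\ball_W$ with $\int_{E_n}\norm{\Psi w_n(t)}_Y\dd t\ge\epsilon$. By reflexivity (Eberlein--Šmulian) we may pass to a weakly convergent subsequence, so $\Psi w_n$ converges weakly in $\Ll^1([0,1];Y)$. Consider the map
\[
f\mapsto \bigl(E\mapsto \textstyle\int_E\norm{f(t)}_Y\dd t\bigr).
\]
Because it is nonlinear, weak convergence does not pass through it directly, and this is the main obstacle. The standard way around it is to use the scalar Dunford--Pettis theorem on the norms $\norm{\Psi w_n(\cdot)}_Y$. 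This needs relative weak compactness of that scalar set, which is not immediate from weak compactness in the Bochner space. That is exactly the content of Diestel--Uhl's result, so citing it is the cleanest route.
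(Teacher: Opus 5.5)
Your proof is correct and follows the paper's route. Reflexivity of $W$ makes $\Psi(\ball_W)$ relatively weakly compact, and the vector-valued Dunford--Pettis-type fact that relatively weakly compact subsets of $\Ll^1([0,1];Y)$ are uniformly integrable then gives the claim, which the paper likewise simply cites. Your optional self-contained sketch is left unfinished, but it is not needed.
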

\begin{proof}
Since $\Psi(\ball_W)\subset \Ll^1([0,1];Y)$ is relatively weakly compact,
by the previously mentioned Dunford--Pettis type results (see, e.g., \cite{diestelWeakCompactnessL1muX1993}),
    $\Psi(\ball_W)$ is uniformly integrable.
\end{proof}
\subsubsection{(V*) sets, complemented copies of $\ell^1$, Rademacher type}
The aim of this section is to show that if $\Psi \in \LL(W,\Ll^1([0,1];Y))$, uniform integrability of $\Psi(\ball_W)$ 
is automatic when the Banach space $W$
does not contain the space $\ell^1$ as a complemented subspace; this seems to be a folklore result. In the proof, we use results from 
Bombal's article \cite{bombalSetsPelczynskisProperty1990} on Pełczyński's property (V*).
Pełczyński's property (V*) originates from Pełczyński's article \cite{pelczynskiBanachSpacesWhich1962}
as a property dual to the property (V); see Section \ref{sec:prop(V)} below. 

To introduce the necessary terminology for property (V*), we recall the definition of weakly unconditionally Cauchy series. Following \cite[{}{II.D.3}]{wojtaszczykBanachSpacesAnalysts1991}, for $(w_n)_{n\in\N} \subset W$ we say that the (formal) series $\sum_{n\in\N} w_n$ is weakly
unconditionally Cauchy (\wuc), if for each $w' \in W'$ the series
$\sum_{n\in\N} w'(w_n)$ is an unconditionally convergent series in $\C$. See also \cite[Def.\ 2.4.3]{albiacTopicsBanachSpace2016}, \cite[641]{pelczynskiBanachSpacesWhich1962},
\cite[Prop.\ 4.3.9 ff.]{megginsonIntroductionBanachSpace1998}, 
\cite[p.\ 58 ff.]{wojtaszczykBanachSpacesAnalysts1991}, \cite[p.\ 99 ff.]{lindenstraussClassicalBanachSpaces1977} and \cite[Ch.\ 2.4]{albiacTopicsBanachSpace2016} for more on \wuc{} series.

Following the definition in \cite[109]{bombalSetsPelczynskisProperty1990}, see also \cite[171]{emmanueleBanachSpacesProperty1988}, a subset $M$ of a Banach
space $W$ is called a (V*) set if, for every \wuc{}\ series $\sum_{n\in\N} w_n'$ in $W'$, it 
holds that 
\[
    \lim_{n\to\infty} \sup_{w\in M} \abs{w_n'(w)} = 0.
\]  
The space $W$ is said to have Pełczyński's property (V*) if (V*) sets in $W$ are weakly compact, see \cite[110]{bombalSetsPelczynskisProperty1990} and compare \cite[Def.\ 3]{pelczynskiBanachSpacesWhich1962}, where
the term (V*) set is not used.

However, we do not make further use of property (V*) itself; the argument uses (V*) sets. With Bombal's results from \cite{bombalSetsPelczynskisProperty1990}, we provide a proof of the following folklore result:
\begin{proposition}
\label{prop:complementedforui}
    Let $W$ be a Banach space containing no complemented copy of $\ell^1$.
    If $Y$ is a Banach space and $\Psi\in \LL(W,\Ll^1([0,1];Y))$,
    then $\Psi(\ball_W)\subset \Ll^1([0,1];Y)$ is uniformly integrable.
\end{proposition}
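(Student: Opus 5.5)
We argue by contradiction, using two facts: (V*) sets are stable under bounded operators, and in $\Ll^1([0,1];Y)$ a (V*) set is already uniformly integrable.

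\emph{Step 1.} By Bombal's characterization \cite{bombalSetsPelczynskisProperty1990}, a Banach space $W$ contains no complemented copy of $\ell^1$ if and only if every bounded subset of $W$ is a (V*) set. This is the dual form of Pełczyński's observation that $W$ contains a complemented $\ell^1$ exactly when $W'$ contains a copy of $c_0$, equivalently when there is a non-trivial \wuc{} series in $W'$. In particular, $\ball_W$ is a (V*) set.

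\emph{Step 2.} Images of (V*) sets under bounded operators are (V*) sets. Indeed, if $\sum_n g_n'$ is \wuc{} in $\Ll^1([0,1];Y)'$, then $\sum_n \Psi' g_n'$ is \wuc{} in $W'$. Hence
\[
\sup_{w\in \ball_W}\abs{g_n'(\Psi w)}=\sup_{w\in\ball_W}\abs{(\Psi'g_n')(w)}\to 0 .
\]
So $M\coloneq\Psi(\ball_W)$ is a bounded (V*) set in $\Ll^1([0,1];Y)$.

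\emph{Step 3 (main obstacle).} We show that a bounded (V*) set $M\subset\Ll^1([0,1];Y)$ is uniformly integrable. Suppose not. Then there are $\epsilon>0$, sets $E_n$ with $\abs{E_n}\to0$, and $f_n\in M$ with $\int_{E_n}\norm{f_n}\ge\epsilon$.

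First we make the relevant pieces disjoint. Each single $f_k$ is uniformly integrable, so after passing to a subsequence we can replace $E_n$ by $F_n\coloneq E_n\setminus\bigcup_{m>n}E_m$. We choose $\abs{E_m}$ small enough along the subsequence that $\int_{\bigcup_{m>n}E_m}\norm{f_n}<\epsilon/2$. The $F_n$ are then pairwise disjoint and satisfy $\int_{F_n}\norm{f_n}\ge\epsilon/2$.

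Next we build norming functionals. Choose strongly measurable $y_n'\colon F_n\to \ball_{Y'}$, approximately norming $f_n$ pointwise; simple-function approximations of $f_n$ make this possible. This gives $\abs{\int_{F_n}\spr{f_n}{y_n'}}\ge\epsilon/4$. Define $g_n'\in \Ll^1([0,1];Y)'$ by $g_n'(h)=\int_{F_n}\spr{h(t)}{y_n'(t)}\dd t$. Since the $F_n$ are disjoint, for any finite set $J$ and signs $\theta_n$ we have $\norm{\sum_{n\in J}\theta_n g_n'}\le 1$, because the combined integrand is bounded by $1$ pointwise. Hence $\sum_n g_n'$ is \wuc{}.

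Finally, $\sup_{f\in M}\abs{g_n'(f)}\ge\abs{g_n'(f_n)}\ge\epsilon/4$ for all $n$, which contradicts the (V*) property from Step 2. The delicate point is the measurable selection of the $y_n'$ when $Y'$ is nonseparable. To handle it we approximate $f_n$ in $\Ll^1$ by a simple function and pick norming functionals on each of its finitely many values. This is exactly where the vector-valued setting needs care; the disjointification itself is routine.
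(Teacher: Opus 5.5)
Your proof is correct and follows the paper's route: $\ball_W$ is a (V*) set by Bombal's characterization \cite[Cor.\ 1.5]{bombalSetsPelczynskisProperty1990}, bounded operators map (V*) sets to (V*) sets, and (V*) sets in $\Ll^1([0,1];Y)$ are uniformly integrable. The only difference is that you prove the last two facts yourself, via the adjoint argument and a disjointification with simple norming functionals that gives a \wuc{} series $\sum_n g_n'$ with $\abs{g_n'(f_n)}\geq\epsilon/4$; the paper instead cites Bombal's remark (b) on p.~110 and \cite[Prop.\ 3.1]{bombalSetsPelczynskisProperty1990}.
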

\begin{proof}
    Let $\Psi \in \LL(W,\Ll^1([0,1];Y))$. 
    Since $W$ lacks a complemented subspace isomorphic to $\ell^1$, \cite[Cor.\ 1.5]{bombalSetsPelczynskisProperty1990} implies that bounded
    subsets of $W$ are (V*) sets; hence $\ball_W$ is a (V*) set.
    Using (b) on \cite[110]{bombalSetsPelczynskisProperty1990}, which states that bounded operators
    take (V*) sets to (V*) sets,
    it follows that $\Psi(\ball_W)$ is a (V*) set in $\Ll^1([0,1];Y)$. Lastly, \cite[Prop.\ 3.1]{bombalSetsPelczynskisProperty1990} implies that (V*) sets in $\Ll^1([0,1];Y)$ are uniformly integrable, ending the proof.
\end{proof}
Building on this result and the notion of (Rademacher) type, we get a further sufficient condition.
We give an abridged summary of the definition of type, as we do not need the concept in its full generality; for 
the full definition of type $p\in [1,2]$ and cotype $q \in [2,\infty]$ involving
Rademacher random variables, we refer to \cite[Def. 7.1.1]{hytonenAnalysisBanachSpaces2018}.

For our purposes, it suffices to distinguish spaces with trivial type 1 from those with nontrivial type $p > 1$.
We use the Maurey--Pisier characterization, see, e.g., \cite[Thm.\ 7.3.8]{hytonenAnalysisBanachSpaces2018}, as a definition.

By \cite[Def.\ 7.1.8, Def.\ 7.3.7, Thm.\ 7.3.8]{hytonenAnalysisBanachSpaces2018}, we say that the Banach space $W$
has trivial type if $\ell_n^1$ is contained in $W$ uniformly in $n$, i.e., there is $\lambda \geq 1$ such that 
for each $n\in \N$ there is an operator $T_n\in \LL(\ell^1_n, W)$ and $\lambda_{1,n},\lambda_{2,n} > 0$ with $\lambda_{2,n}/\lambda_{1,n} \leq \lambda$ and
the property that 
\[\lambda_{1,n}\norm{w^{(n)}}_1 \leq \norm{T_n w^{(n)}}_W \leq \lambda_{2,n}\norm{w^{(n)}}_1\]
holds for every $w^{(n)} \in \ell^1_n$.  

Similarly, also by \cite[Def.\ 7.1.8, Def.\ 7.3.7, Thm.\ 7.3.8]{hytonenAnalysisBanachSpaces2018},
we say that $W$ has cotype $\infty$ if $\ell^\infty_n$ is uniformly contained
in $W$.
\begin{remark}
By \cite[Cor.\ 7.4.24]{hytonenAnalysisBanachSpaces2018}, the space $W$ has nontrivial type if and only if $W'$ has nontrivial type.
Moreover, if $W$ has nontrivial type, then $W$ also has finite cotype, see \cite[Prop.\ 7.1.14]{hytonenAnalysisBanachSpaces2018}. Lastly, the definition implies that spaces with
finite cotype cannot contain $c_0$.

We recall that reflexivity and nontrivial Rademacher
type of Banach spaces are logically independent. Examples can be constructed of reflexive spaces 
that only have trivial type, see, e.g., \cite[318--319]{albiacTopicsBanachSpace2016} and of 
spaces with nontrivial type that
are not reflexive; see, e.g., \cite{jamesNonreflexive1978}.  
        
However, for many classes of well-known function spaces, such as the $\Ll^p$ scale, Orlicz spaces $\Ll_F$, and closed
subspaces of $\Ll^1$, the concepts of nontrivial type and reflexivity \emph{are} equivalent. For
$\Ll^p$ spaces see, e.g., \cite[Cor.\ 11.7]{diestelAbsolutelySummingOperators1995}.
For Orlicz spaces, the assertion follows from \cite[Thm.\ 4.13.9]{pickFunctionSpaces2013}
in conjunction with \cite[Prop.\ 4, Thm.\ 5]{Kaminiska_Turett_1991}; see also
\cite[Ch.\ 11, Notes and Remarks]{diestelAbsolutelySummingOperators1995}.
Lastly, for closed subspaces of $\Ll^1([0,1])$ the claim follows from \cite[Thm.\ 13.10, Thm.\ 13.19]{diestelAbsolutelySummingOperators1995}.
\end{remark}
\begin{corollary}\label{cor:typeforCs}
    Assume $W$ has nontrivial type. Then every $\Psi\in\LL(W, \Ll^1([0,1];Y))$
    has the property that
    $\Psi(\ball_W)$ is uniformly integrable.
\end{corollary}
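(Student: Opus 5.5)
The plan is to reduce Corollary~\ref{cor:typeforCs} to Proposition~\ref{prop:complementedforui}. It then suffices to show that a Banach space $W$ with nontrivial type contains no complemented copy of $\ell^1$. In fact we will show the stronger statement that $W$ contains no isomorphic copy of $\ell^1$ at all. Once this is established, Proposition~\ref{prop:complementedforui} applies verbatim to any $\Psi\in\LL(W,\Ll^1([0,1];Y))$ and yields uniform integrability of $\Psi(\ball_W)$.

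The key step is the contrapositive. Suppose $J\colon \ell^1\to W$ is an isomorphic embedding, so that $c\norm{a}_1\le\norm{Ja}_W\le C\norm{a}_1$ for all $a\in\ell^1$ and some constants $0<c\le C$. For each $n\in\N$, let $T_n$ be the restriction of $J$ to the span of the first $n$ unit vectors, which is isometrically $\ell^1_n$. Set $\lambda_{1,n}=c$ and $\lambda_{2,n}=C$. Then
\[\lambda_{1,n}\norm{w^{(n)}}_1\le\norm{T_nw^{(n)}}_W\le\lambda_{2,n}\norm{w^{(n)}}_1,\qquad w^{(n)}\in\ell^1_n,\]
with $\lambda_{2,n}/\lambda_{1,n}=C/c=:\lambda$ independent of $n$. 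Hence $\ell^1_n$ is contained in $W$ uniformly in $n$. By the Maurey--Pisier characterization adopted above as the definition, this means $W$ has trivial type, which contradicts the assumption.

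Consequently $W$ contains no copy of $\ell^1$, and in particular no complemented one, so Proposition~\ref{prop:complementedforui} finishes the proof. No step here is a real obstacle. The only point needing care is that restricting $J$ gives constants uniform in $n$, and this is immediate because the embedding constants of $J$ do not depend on $n$.
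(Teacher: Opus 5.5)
Your proof is correct and follows the paper's route: show that $W$ contains no copy of $\ell^1$ at all, and then invoke Proposition~\ref{prop:complementedforui}. The only difference is in the middle step. You check the absence of $\ell^1$ directly from the Maurey--Pisier definition, by restricting an embedding $\ell^1\to W$ to $\ell^1_n$ with constants independent of $n$. The paper instead cites that type passes to subspaces, is preserved under isomorphisms, and that $\ell^1$ has trivial type.
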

\begin{proof}
        If $W$ has nontrivial type, any subspace of
        $W$ has nontrivial type, see \cite[219]{diestelAbsolutelySummingOperators1995}. Since $\ell^1$
        has trivial type \cite[Cor.\ 7.1.10]{hytonenAnalysisBanachSpaces2018} and type is preserved under
        isomorphisms \cite[Rem.\ 6.2.11\ (f)]{albiacTopicsBanachSpace2016}, it follows that no subspace
        of $W$ is isomorphic to $\ell^1$. In particular, $\ell^1$ is not isomorphic
        to any complemented subspace of $W$. 
        By Proposition
        \ref{prop:complementedforui} above, $\Psi(\ball_W)$
        is uniformly integrable.
\end{proof} 
\begin{remark}
    Parenthetically, we note that Corollary \ref{cor:typeforCs} can alternatively be
    proven directly using a Maurey--Nikishin-type
    factorization result for sublinear operators from \cite[Cor.\ 2.1]{diestelFactoringMultisublinearMaps2014}. 
\end{remark}
\subsection{Boundedness in Orlicz spaces and de la Vallée Poussin's
criterion}\label{sec:orlicz}\label{sec:OrliczIntro}
In our context, the main benefit of uniform integrability is that de~la Vallée~Poussin's theorem applies, asserting that a set $M \subset \Ll^1([0,1];Y)$
is uniformly integrable if and only if there is a convex function $F$ such that 
$F\circ f$ is integrable for all $f\in M$.
Moreover, 
this integrability property can be seen as boundedness in some Orlicz space $\Ll_F$. 
\subsubsection{Orlicz spaces}
In introducing the Orlicz function spaces, we follow Section 4 in \cite[Sect.\ 4]{pickFunctionSpaces2013} and \cite{hosfeldInputtostateStabilityClasses2025}. Compare also \cite[Ch.\ 8]{adamsSobolevSpaces2003}, \cite{raoTheoryOrliczSpaces1991} and \cite{krasnoselskiiConvexFunctionsOrlicz1961}. 
We begin with Young functions; we note that, in literature, the definitions of 
Young functions vary, and that some authors use the term ``N-function''.

Following \cite[Definition 4.2.1, Remark 4.2.7]{pickFunctionSpaces2013}, we call $F\colon [0,\infty) \to [0,\infty)$ a \emph{Young function} if $F$ is convex, continuous, and increasing with $\lim_{t\searrow 0} F(t)/t = 0$ and $\lim_{t\to\infty} F(t)/t = \infty$.

In the rest of this section
we let $F$ be a fixed Young function, $[a,b]$ a fixed interval and $Y$ be a
given Banach space.
As in \cite[Def.\ III.1.5]{raoTheoryOrliczSpaces1991}, the \emph{Orlicz space} $\Ll_F([a,b];Y)$ is defined by
\[
    \Ll_F([a,b];Y) \coloneq \biggl\{ f\in\Ll^1([a,b];Y) : \exists c>0\colon \int_a^b F(c\norm{f(x)}_Y)\dd x < \infty \biggr\}.
\]
We use the Luxemburg norm on $\Ll_F([a,b];Y)$; following \cite[Definition 4.8.1]{pickFunctionSpaces2013}, we set 
\[
    \norm{f}_{\Ll_F([a,b];Y)} \coloneq \inf\biggl\{ k > 0 : \int_a^b F(\norm{f(s)}_Y/k)\dd s \leq 1 \biggr\}.
\]
This norm makes $\Ll_F([a,b];Y)$ into a Banach space; see, e.g.,
\cite[Thm.\ III.3.10]{raoTheoryOrliczSpaces1991}, \cite[Cor.\ 1.2.22]{hosfeldInputtostateStabilityClasses2025} or \cite[{}{(5.1.22)}]{edgarStoppingTimesDirected1992}. We also introduce the space $\Ee_F([a,b];Y) 
\subset \Ll_F([0,1];Y)$, which we define as the closure of the $Y$-valued
simple functions; see \cite[Def.\ 4.12.1, Thm.\ 4.12.8]{pickFunctionSpaces2013}.

If $F$ is a Young function, we call the function $\tilde F$ defined by $\tilde F(x) \coloneq \sup \{ xy - F(y) : y \geq 0 \}$
the complementary (Young) function corresponding to $F$; see \cite[6]{raoTheoryOrliczSpaces1991}.

Hölder's inequality for $f\in \Ll_F([a,b];Y)$ and $g\in \Ll_{\tilde F}([a,b];Y')$ reads as follows:
\[
    \int_a^b \abs{\spr{f(x)}{g(x)}_{Y\times Y'}} \dd x \leq 2 \norm{f}_{\Ll_F([a,b];Y)}\norm{g}_{\Ll_{\tilde F}([a,b];Y')},
\]
see, e.g., \cite[Lem.\ 1.2.19]{hosfeldCharacterizationOrliczAdmissibility2023}, \cite[Prop.\ III.3.1, Rem.\ p.\ 58]{raoTheoryOrliczSpaces1991},
\cite[Thm.\ 4.8.5, Thm.\ 4.8.7]{pickFunctionSpaces2013}.

In some cases it can be useful to alternatively consider Orlicz'
norm, which we denote by $\norm{\,\cdot\,}_{\Ll_F([a,b];Y),\text{O}}$, see, e.g.,
\cite[Def.\ 4.6.1]{pickFunctionSpaces2013}. 
Importantly, the
Orlicz and Luxemburg norms are equivalent, see \cite[Prop.\ III.3.4]{raoTheoryOrliczSpaces1991}.

We say that $F$ satisfies the $\Delta_2$-condition (denoted $F\in \Delta_2$, see, e.g.\ \cite[Def.\ 4.4.1]{pickFunctionSpaces2013}) if there is $t_0 \geq 0$ and $C > 0$ such that $F(2t) \leq C F(t)$ holds for all $t \geq t_0$. Note that $\Ee_F([0,1];Y) = \Ll_F([0,1];Y)$ if and only if $F \in \Delta_2$; cf.\ \cite[Prop.\ 4.12.3, Rem.\ 4.12.4]{pickFunctionSpaces2013}.
\subsubsection{De la Vallée Poussin's theorem}
Let $Y$ be a Banach space. It can be shown that, individually, each element
of $\Ll^1([0,1];Y)$ lies in some Orlicz space, cf., e.g., \cite[Theorem~4.2.5]{pickFunctionSpaces2013}.
However, an arbitrary bounded set $M \subset \Ll^1([0,1];Y)$ need not be contained in a single Orlicz space. But under a suitable stronger condition, which, due to the classical de~la Vallée~Poussin theorem, is uniform
integrability of $M$, containment in a shared Orlicz space does follow. 

There is a great variety of work on uniform integrability and de la Vallée Poussin-type theorems; we mention the references \cite{chafaiValleePoussinUniform2014,diestelUniformIntegrabilityIntroduction1991,diestelVectorMeasures1977,raoTheoryOrliczSpaces1991,dellacherieProbabilitiesPotential1978,meyerLemmeValleePoussin1978,edgarStoppingTimesDirected1992} in this context.
We will make use of the following ``strengthened'' version of de la Vallée Poussin's theorem. This formulation seems to be folklore; see, e.g., the proof of \cite[Thm.\ 5.1]{ciprianiSobolevOrliczImbeddings2000}. Using \cite[Thm.\ II.22]{dellacherieProbabilitiesPotential1978}, \cite[Lemme]{meyerLemmeValleePoussin1978}, \cite[Thm.\ I.2.2, Lem.\ III.4.1]{raoTheoryOrliczSpaces1991} and \cite[Thm.\ 2.3.5]{edgarStoppingTimesDirected1992},
a full proof of this version of the theorem can be obtained.
\begin{proposition}[de la Vallée Poussin]
    Let $M$ be a bounded subset of $\Ll^1([0,1];Y)$. Then the following are equivalent:
    \begin{enumerate}[label=\alph*\textup{)}]
        \item there is a Young function $G \in \Delta_2$ such that 
            $M \subset \Ee_G([0,1];Y)$ is norm bounded;
        \item there is a Young function $F$ such that $M$ 
        is a norm bounded subset of $\Ll_F([0,1];Y)$;
        \item $M$ is uniformly integrable.
    \end{enumerate}
\end{proposition}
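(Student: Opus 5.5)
I plan to prove the cycle a) $\Rightarrow$ b) $\Rightarrow$ c) $\Rightarrow$ a). The first implication is immediate: $\Ee_G([0,1];Y)$ is a closed subspace of $\Ll_G([0,1];Y)$ carrying the same Luxemburg norm, so we may take $F=G$.

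For b) $\Rightarrow$ c), I assume $\norm{f}_{\Ll_F}\le K$ for all $f\in M$. By the definition of the Luxemburg norm, $\int_0^1 F(\norm{f(s)}_Y/K)\dd s\le 1$ for every $f\in M$. Given a measurable set $E$ and a level $R>0$, I split $E$ into the part where $\norm{f}_Y\le RK$ and the part where it is larger. On the first part the integral is at most $RK\abs{E}$. On the second part I use $\norm{f}_Y/K \le \frac{R}{F(R)}\,F(\norm{f}_Y/K)$; this holds because $F(t)/t$ is nondecreasing by convexity and $F(0)=0$. This gives
\[
\sup_{f\in M}\int_E\norm{f(s)}_Y\dd s\le RK\abs{E}+\frac{KR}{F(R)}.
\]
Since $F(R)/R\to\infty$, I first choose $R$ large to make the second term small and then let $\abs{E_n}\to0$. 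This yields uniform integrability.

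The main work is c) $\Rightarrow$ a), and it reduces to the scalar case. The set $\{\norm{f(\cdot)}_Y : f\in M\}\subset\Ll^1([0,1])$ is uniformly integrable and bounded. Set $\mu(t)\coloneq\sup_{f\in M}\abs{\{\norm{f}_Y>t\}}\le C/t$. Uniform integrability gives
\[
\epsilon(t)\coloneq\sup_{f\in M}\int_{\{\norm{f}_Y>t\}}\norm{f}_Y\dd s\to0 \quad\text{as } t\to\infty.
\]
I then choose an increasing sequence $t_k\to\infty$ with $\sum_k\epsilon(t_k)<\infty$, after thinning so that $t_{k+1}\ge2t_k$. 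I define $F(t)=\int_0^t g$ with $g$ a nondecreasing step function equal to $k$ on $[t_k,t_{k+1})$. This is Meyer's/Dellacherie's construction. By Fubini,
\[
\int_0^1 F(\norm{f}_Y)\dd s\le\sum_k\int_{\{\norm{f}_Y>t_k\}}\norm{f}_Y\dd s\le\sum_k\epsilon(t_k)<\infty
\]
uniformly in $f\in M$.

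The \textbf{main obstacle} is arranging the $\Delta_2$ condition together with the Young-function normalisations. The slow growth $g(t_{k+1})-g(t_k)=1$ combined with $t_{k+1}\ge 2t_k$ gives $g(2t)\le g(t)+1\le 2g(t)$ for large $t$, hence $F(2t)=2\int_0^t g(2s)\dd s\le 4F(t)$ eventually. The remaining normalisations are handled as follows. For $\lim_{t\searrow0}F(t)/t=0$, I replace $g$ near $0$ by a linear ramp. For $F(t)/t\to\infty$, it suffices that $g\to\infty$. Continuity of $F$ is automatic.

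It remains to pass from finiteness of the modular to boundedness in $\Ee_G$. The uniform modular bound $\int_0^1 G(\norm{f}_Y)\le C'$ with $C'\ge1$ gives $\norm{f}_{\Ll_G}\le C'$ by convexity, using $G(x/C')\le G(x)/C'$. Finally, $G\in\Delta_2$ gives $\Ll_G=\Ee_G$ on the finite measure space. I would double-check the eventual versus global $\Delta_2$ issue, which is harmless on $[0,1]$ since the paper's definition only requires $t\ge t_0$.
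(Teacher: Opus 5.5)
Your proposal is correct and is essentially the argument the paper defers to. The paper gives no self-contained proof; it cites Dellacherie--Meyer's de la Vallée Poussin construction (a step function $g$ counting the levels $t_k$ with $\sum_k \epsilon(t_k)<\infty$) and Meyer's lemma (spacing the levels geometrically to obtain moderate, i.e.\ $\Delta_2$, growth), together with standard Orlicz-space facts: a uniform modular bound gives a uniform Luxemburg bound, and $\Delta_2$ gives $\Ll_G=\Ee_G$ on $[0,1]$. This is exactly what you carry out after passing to the scalar functions $\norm{f(\cdot)}_Y$.

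One implicit step is worth writing out. The paper's sequential definition of uniform integrability has to be converted into the $\epsilon$--$\delta$ form before you can conclude $\epsilon(t)\to 0$ for the $f$-dependent sets $\{\norm{f}_Y>t\}$; this follows at once by contradiction.
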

We observe that this theorem has the following consequence for 
linear operators into $\Ll^1$:
\begin{theorem}\label{thm:ui_C}
    Let $W$ and $Y$ be Banach spaces and let $\Psi\colon W \to \Ll^1([0,1];Y)$ be linear. The following are equivalent:
    \begin{enumerate}[label=\alph*\textup{)}]
        \item There is a Young function $G \in \Delta_2$
        such that $\Psi$ factors through $\Ee_G([0,1];Y)$ with 
        \[\norm{\Psi w}_{\Ll_G([0,1];Y)} \leq C_G \norm{w}_W\] for $w\in W$ and some $C_G \geq 0$;
        \item There is a Young function $F$
        such that $\Psi$ factors through 
        $\Ll_F([0,1];Y)$ with \[\norm{\Psi w}_{\Ll_F([0,1];Y)} \leq C_F \norm{w}_W\] for $w\in W$ and some $C_F \geq 0$;
        \item The set $\Psi(\mathrm{B}_W)
        \subset \Ll^1([0,1];Y)$
        is uniformly integrable.
    \end{enumerate}
\end{theorem}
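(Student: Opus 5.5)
The plan is to deduce the theorem from the de la Vallée Poussin proposition applied to $M \coloneq \Psi(\ball_W)$, using linearity of $\Psi$ to pass between ``$M$ is bounded in a space'' and ``$\Psi$ is bounded into that space''. The implications a) $\Rightarrow$ b) $\Rightarrow$ c) $\Rightarrow$ a) will be proved in turn.

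For a) $\Rightarrow$ b), take $F \coloneq G$. Then $\Ee_G([0,1];Y) \subset \Ll_G([0,1];Y)$ carries the restricted Luxemburg norm, and the estimate is literally the same. For b) $\Rightarrow$ c), the bound $\norm{\Psi w}_{\Ll_F} \le C_F$ for $w \in \ball_W$ shows that $M$ is a norm bounded subset of $\Ll_F([0,1];Y)$. Before applying the proposition, one must check that $M$ is bounded in $\Ll^1$. This follows from the continuous embedding $\Ll_F([0,1];Y) \hookrightarrow \Ll^1([0,1];Y)$ on a finite measure space. Concretely, Hölder's inequality with the constant function $1 \in \Ll_{\tilde F}$ gives $\norm{f}_{\Ll^1} \le 2\norm{1}_{\Ll_{\tilde F}}\norm{f}_{\Ll_F}$, or one can argue directly from convexity and $F(t)/t \to \infty$. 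The implication b) $\Rightarrow$ c) of the proposition then gives uniform integrability of $M$.

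For c) $\Rightarrow$ a), the first step is that $M$ is bounded in $\Ll^1$. As recalled in Section~\ref{sec:ui}, uniform integrability already implies this because Lebesgue measure is non-atomic. Consequently $\Psi$ is bounded into $\Ll^1$, so no boundedness assumption on $\Psi$ is needed up front. The proposition, direction c) $\Rightarrow$ a), then yields $G \in \Delta_2$ and $C_G$ with $M \subset \Ee_G([0,1];Y)$ and $\sup_{f\in M}\norm{f}_{\Ll_G} \le C_G$. By linearity, for $w \ne 0$ the element $\Psi(w/\norm{w}) = \Psi w/\norm{w}$ lies in $M$, so $\Psi w \in \Ee_G$ with $\norm{\Psi w}_{\Ll_G} \le C_G\norm{w}_W$; the case $w = 0$ is trivial. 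Hence $\Psi$, viewed as a map $W \to \Ee_G([0,1];Y)$, is bounded, and composing with the continuous inclusion $\Ee_G \hookrightarrow \Ll^1$ gives the claimed factorization. The same argument, applied with $\Ll_F$ in place of $\Ee_G$, shows that in b) the factorization is through bounded operators.

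The only genuinely delicate points are the $\Ll^1$-boundedness of $M$ needed to invoke the proposition and the continuity of the inclusions $\Ll_F, \Ee_G \hookrightarrow \Ll^1$. I expect the inclusion estimate, which requires a short Young-function computation, to be the main, though minor, obstacle. Everything else is bookkeeping with homogeneity.
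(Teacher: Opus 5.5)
Your proposal is correct and follows the same route as the paper: apply the de la Vallée Poussin proposition to $M=\Psi(\ball_W)$, and use the homogeneity of $\Psi$ to pass between norm-boundedness of $M$ in the relevant Orlicz space and the operator estimates in a) and b). The only difference is that you make explicit the $\Ll^1$-boundedness checks needed to invoke the proposition (the embedding $\Ll_F\hookrightarrow\Ll^1$, and uniform integrability implying $\Ll^1$-boundedness), which the paper's one-line proof leaves implicit.
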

\begin{proof}
The statements a) and b) are equivalent to $\Psi(\ball_W)$ being a norm bounded subset of
the corresponding Orlicz space. Then the claim follows from de la Vallée Poussin's 
theorem.\end{proof}
\begin{corollary}\label{cor:reflexiveforC}\label{cor:complementedl1Orlicz}
Let $W$, $Y$ be Banach spaces.
If the operator $\Psi\colon W \to \Ll^1([0,1];Y)$ is weakly compact, 
there exists a Young
    function $G \in \Delta_2$ such that $\Psi$ factors through
    $\Ee_G([0,1],Y)$. 
\end{corollary}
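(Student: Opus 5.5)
The plan is to reduce the claim to criterion c) of Theorem \ref{thm:ui_C}: it suffices to show that $\Psi(\ball_W)$ is uniformly integrable in $\Ll^1([0,1];Y)$. The implication c)$\Rightarrow$a) of that theorem then gives a Young function $G\in\Delta_2$ such that $\Psi$ is bounded into $\Ll_G([0,1];Y)$. Since $G\in\Delta_2$, we have $\Ee_G([0,1];Y)=\Ll_G([0,1];Y)$, so $\Psi$ indeed factors through $\Ee_G([0,1];Y)$. The factorization is $\Psi = \iota\circ Q$, where $Q\colon W\to\Ee_G$ is $\Psi$ with restricted codomain and $\iota\colon \Ee_G\hookrightarrow \Ll^1$ is the inclusion. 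The inclusion is bounded because Young functions satisfy $F(t)\ge ct-d$ on a bounded interval; this is a standard Orlicz-space embedding.

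To get uniform integrability, I use weak compactness of $\Psi$: by definition $M\coloneq\Psi(\ball_W)$ is relatively weakly compact in $\Ll^1([0,1];Y)$. The vector-valued Dunford--Pettis direction recalled in Section \ref{sec:ui} says that relatively weakly compact subsets of $\Ll^1([0,1];Y)$ are uniformly integrable (see \cite[Thm.\ IV.2.4]{diestelVectorMeasures1977}). This is exactly the step already used in the proof of Proposition \ref{prop:reflexiveui}, so I invoke it the same way.

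If one prefers an argument that does not rely on that citation, I would reduce to the scalar case via $N\colon f\mapsto \norm{f(\cdot)}_Y$. This map is not linear, so weak compactness does not transfer directly. Instead, I would argue by contradiction. Suppose there are $f_n\in M$ and sets $E_n$ with $\abs{E_n}\to0$ but $\int_{E_n}\norm{f_n}\ge\epsilon$. By Eberlein--\v{S}mulian, pass to a weakly convergent subsequence. Then choose norming functions $g_n\in\Ll^\infty([0,1];Y')$ supported on $E_n$ with $\norm{g_n}_\infty\le1$ and $\spr{f_n}{g_n}\approx\int_{E_n}\norm{f_n}$. A gliding-hump argument then contradicts weak convergence. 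This is the main obstacle, since Bochner-space duality is delicate for general $Y$: the norming functionals require measurable selection of near-norming elements of $Y'$, which is possible for simple approximants of $f_n$. I would therefore rely on the cited Dunford--Pettis-type result rather than redo this.
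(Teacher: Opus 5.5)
Your proposal is correct and matches the paper's proof. The paper likewise gets uniform integrability of $\Psi(\ball_W)$ from the vector-valued Dunford--Pettis direction (relatively weakly compact sets in $\Ll^1([0,1];Y)$ are uniformly integrable) and then applies de la Vall\'ee Poussin's theorem in the form of Theorem \ref{thm:ui_C}, c) $\Rightarrow$ a), which already yields the factorization through $\Ee_G([0,1];Y)$ with $G\in\Delta_2$, so your detour through $\Ee_G=\Ll_G$ is harmless but unnecessary.
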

\begin{proof}
    The claim follows by the Dunford--Pettis theorem and de la Vallée Poussin's theorem.
\end{proof}
\section{Main abstract results}\label{sec:B_operators}
To introduce our main results, which rely on the structure of the operators $\Phi$ as introduced in Section \ref{sec:intro}, we pass to the framework of admissible operators. For this, we let $X$ be a Banach space and let $A$ be the generator of a \czero{}-semigroup $T$
on $X$. We consider operators $B \in \LL(U, X_{-1})$; where $U$ is another 
Banach space and $X_{-1}$ is the extrapolation space obtained by completing $X$ in the norm $\norm{R(z,A)\,\cdot\,}_X$ for some $z\in\rho(A)$. 

The operator $B$ is 
called $\mathrm{Z}$-admissible if
$\Phi_1\colon u \mapsto \int_0^1 T(1-s)Bu(s)\dd s$
is a well-defined and bounded operator $\mathrm{Z}([0,1];U) \to X$, where $\mathrm{Z}$ is a placeholder for the spaces $\Cc$, $\Ll^{p}$ for $p\in [1,\infty]$, or $\Ee_{F}$ for some Young function $F$.
We say that $B$ is zero-class $\mathrm{Z}$-admissible if even 
$ \sup_{u\in \mathrm{Z}([0,\epsilon];U) \leq 1} \norm{\int_0^\epsilon T(\epsilon - s) Bu(s) \dd s}_{X}
    \to 0$ as $\epsilon \searrow0$.
Admissible operators have been studied intensively in the past decades \cite{weissAdmissibleObservationOperators1989,weiss_admissibility_1989,staffans_well-posed_2005} in the context of infinite-dimensional control theory. One typically distinguishes \emph{admissible control}---we simply use the term \emph{admissible} here---and \emph{observation} operators; see also Section \ref{sec:admissibilityobservation} in the sequel.

In this section, we investigate the relation between $\Cc$-admissibility and Orlicz-admissibility. Before we state the main results, we revisit the connection between
$\Cc$-admissibility and structured maximal regularity with respect to the continuous 
functions, which was mentioned in the introduction of this article.
\begin{proposition}
Let $X$ and $U$ be Banach spaces and let $A$ generate a \czero-semigroup on $X$. Let $B \in \LL(U,X_{-1})$ and set $B_0 \coloneq R(\lambda,A)B$ for some $\lambda \in \rho(A)$. The following statements are equivalent:
\begin{enumerate}
    \item $A$ has $B_0$-structured maximal $\Cc$-regularity; \item $\Phi\colon f \mapsto A\int_0^{\,\cdot\,} T(\,\cdot\, - s) B_0 f(s) \dd s$ is a bounded map $\Cc([0,1];U) \to \Cc([0,1];X)$;
    \item $B$ is $\Cc$-admissible for $A$;
    \item $\Phi_1\colon f \mapsto A\int_0^{1} T(1 - s) B_0 f(s) \dd s$ is a bounded map $\Cc([0,1];U) \to X$;
\end{enumerate}
\begin{proof}
The statements 1.\ $\Leftrightarrow$ 2.\ and 3.\ $\Leftrightarrow$ 4.\ follow from the definitions. The remaining
implication 2.\ $\Leftrightarrow$ 3.\ is the content of
Theorem \ref{thm:structuredmaxreg} in the appendix.
\end{proof}
\end{proposition}
\begin{remark}
    By \cite[Proof of Prop.\ 5]{jacobContinuitySolutionsParabolic2019}, Orlicz-admissibility implies 
    zero-class $\Ll^\infty$-admissibility and thus also zero-class $\Cc$-admissibility. Also, it is known that $\Cc$-admissibility is equivalent to admissibility with respect
    to the space of regulated functions: see, e.g., \cite[Prop.\ 2.1]{aroraAdmissibleOperatorsSundual2025}. See also \cite[Thm.\ 10.2.2]{staffans_well-posed_2005}
    for more on regulated admissibility.
\end{remark}
Using sun duality, which is a duality concept that takes the semigroup structure into account, we present sufficient conditions for the improvement of $\Cc$-admissibility to Orlicz-admissibility. The main
argument rests on identifying the image of a formal dual map related to 
$\Phi_1$ with a uniformly integrable subset of
$\Ll^1([0,1];U')$ and then applying the results of Section 2.
\subsection{Sun duality and uniform integrability}
It is well-known that the adjoint semigroup $T'$ on the dual space $X'$ is not necessarily strongly continuous, see, e.g., \cite[II.2.6, Examples, pp.~63--64]{engel_one-parameter_2000} and \cite[Ex.\ 1.3.9]{neervenAdjointSemigroupLinear1992}. Following
\cite[5]{neervenAdjointSemigroupLinear1992} (see also \cite[II.2.6, pp.~62--64]{engel_one-parameter_2000}), we define the sun dual space $X^\odot$ as the
space of strong continuity of the semigroup $T'$ (in particular, $X^\odot$ depends on $T$) by setting
\[
    X^\odot \coloneq \{ x' \in X' : \norm{T'(t) x' - x'}_{X'} \to 0\text{ as }t\searrow 0\}.
\]
By \cite[5]{neervenAdjointSemigroupLinear1992}, $X^\odot$ is a closed subspace of $X'$,
hence itself a Banach space. Note that, for any semigroup $T$ and any Banach space $X$, the domain of $A'$ is a subset of $X^\odot$, see, e.g., \cite[II.2.6, Lem., p.~62]{engel_one-parameter_2000}. Next, by \cite[6]{neervenAdjointSemigroupLinear1992}, $T^\odot \coloneq T|_{X^\odot}$ is a \czero-semigroup and by \cite[Thm.\ 1.3.3]{neervenAdjointSemigroupLinear1992}, cf.\ also \cite[II.2.6, Prop., p.~63]{engel_one-parameter_2000}, its generator $A^\odot$ is given by 
\[
    A^\odot x^\odot \coloneq A'x^\odot,\qquad x^\odot \in D(A^\odot) \coloneq \{x^\odot \in  D(A') : A'x^\odot \in X^\odot\}.
\]
By \cite[Thm.\ 3.1.15, Cor.\ 3.1.17]{neervenAdjointSemigroupLinear1992}, sun duality obeys the relation $(X_{-1})^\odot = D(A^\odot)$.

In the context of the semigroup $T$ on $X$ as above, if we now consider another Banach space $U$ and suppose that $B \in \LL(U,X_{-1})$ is $\Cc$-admissible, then the input operator $\Phi_1\colon
\Cc([0,1];U) \to X$ is bounded. The following result from \cite[Thm.\ 3.1]{aroraAdmissibleOperatorsSundual2025}, which will be crucial in the next section, shows the connection between $\Cc$-admissibility and operators $X^\odot \to \Ll^1([0,1];U')$:
\begin{proposition}[{\cite[Thm.\ 3.1]{aroraAdmissibleOperatorsSundual2025}}]\label{prop:sun_duality}
If $X$ and $U$ are Banach spaces and $T$ is a \czero-semigroup on $X$, then an
operator $B \in \LL(U,X_{-1})$ is $\Cc$-admissible if and only
if
\[
    \tilde \Psi_1\colon X^\odot \to \Ll^1([0,1];U'),\qquad x^\odot \mapsto \left.B'\right|_{(X^\odot)_1}T^\odot (\,\cdot\,) x^\odot
\]
defines a bounded linear operator.
\end{proposition}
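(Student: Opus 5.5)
Since this proposition is quoted from \cite[Thm.\ 3.1]{aroraAdmissibleOperatorsSundual2025}, I will only outline how I would prove it. The idea is a duality between $\Phi_1\colon \Cc([0,1];U)\to X$ and the map $\tilde\Psi_1$, read through the sun dual.

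\emph{The pairing.} For smooth $u$ and $x^\odot\in D(A^\odot)=(X_{-1})^\odot$ I want
\[
\spr{\Phi_1 u}{x^\odot} = \int_0^1 \spr{u(s)}{B'T^\odot(1-s)x^\odot}\dd s .
\]
This follows by moving the pairing inside the integral. It uses that $T_{-1}(t)'$ restricted to $(X_{-1})^\odot$ is $T^\odot(t)$, and that $B'$ maps $(X^\odot)_1=D(A^\odot)$ boundedly into $U'$ (graph norm). Hence $s\mapsto B'T^\odot(s)x^\odot$ is continuous, and in particular lies in $\Ll^1$.

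\emph{($\Rightarrow$).} Assume $B$ is $\Cc$-admissible and take $x^\odot\in D(A^\odot)$. Then
\[
\norm{\tilde\Psi_1x^\odot}_{\Ll^1([0,1];U')}=\sup\Bigl\{\Abs{\int_0^1\spr{u(s)}{B'T^\odot(1-s)x^\odot}\dd s} : u\in\Cc([0,1];U),\ \norm{u}_\infty\le1\Bigr\}.
\]
This identity holds because $\Cc([0,1];U)$ is norming for $\Ll^1([0,1];U')$. For continuous $g$ one sees this by approximating $\norm{g(s)}$ with continuous unit-vector selections, using a partition of unity.

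By the pairing, the supremum is at most $\norm{\Phi_1}\norm{x^\odot}$. Since $D(A^\odot)$ is dense in $X^\odot$, $\tilde\Psi_1$ extends to a bounded operator on $X^\odot$.

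\emph{($\Leftarrow$).} Assume $\tilde\Psi_1$ is bounded. For $u\in\Cc^1([0,1];U)$ the element $\Phi_1u$ lies in $X$ (integrate by parts). The pairing then gives $\abs{\spr{\Phi_1u}{x^\odot}}\le\norm{\tilde\Psi_1}\norm{u}_\infty\norm{x^\odot}$.

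Next I use that $X^\odot$ is norming for $X$: one has $\norm{x}\le M\sup_{\norm{x^\odot}\le1}\abs{\spr{x}{x^\odot}}$, which comes from $\lambda R(\lambda,A')x'\to x'$ weak$^*$. This yields $\norm{\Phi_1u}\le M\norm{\tilde\Psi_1}\norm{u}_\infty$. Finally, $\Cc^1$ is dense in $\Cc$, and $\Phi_1$ is continuous into $X_{-1}$. So $\Phi_1$ extends boundedly, the extension is the well-defined $X_{-1}$-valued integral, and it takes values in $X$.

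\emph{Main obstacle.} The delicate points are the two norming arguments: $\Cc([0,1];U)$ norming $\Ll^1([0,1];U')$ when $U'$ lacks the Radon--Nikodým property, and $X^\odot$ norming $X$. Both must hold with constants independent of $u$ and $x^\odot$.
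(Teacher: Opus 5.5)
Your outline is correct and matches the argument the paper itself uses when it invokes this result (see the proofs of Theorems~\ref{thm:equi_for_B} and~\ref{thm:mainwcompactness}). The ingredients are the same: the pairing $\spr{\Phi_1u}{x^\odot}=\int_0^1\spr{u(s)}{B'T^\odot(1-s)x^\odot}\dd s$ for $x^\odot\in D(A^\odot)$, density of $D(A^\odot)$ in $X^\odot$, the fact that $X^\odot$ norms $X$, and the norming of $\Ll^1([0,1];U')$ by $\Cc([0,1];U)$. The Radon--Nikodým worry in your last paragraph is unnecessary: by Remark~\ref{rem:isometryL1}, $\Ll^1([0,1];U')$ embeds isometrically into $\Cc([0,1];U)'$ for every Banach space $U$, and you only need this for the continuous functions $B'T^\odot(\,\cdot\,)x^\odot$, $x^\odot\in D(A^\odot)$, which your partition-of-unity argument already handles.
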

In a similar way, we connect uniform integrability of $\tilde{\Psi}_{1}(\ball_{X^{\odot}})$ to Orlicz-admissibility.
\begin{theorem}\label{thm:equi_for_B}
Let $X$ be a Banach space and let $T$ be a \czero-semigroup on $X$. Let 
$U$ be another Banach space and let $B \in \LL(U,X_{-1})$ be $\Cc$-admissible.
Consider the following statements:
    \begin{enumerate}[label=\alph*\textup{)}]
        \item There is a Young function $F$ such that $B$ is $\Ee_F$-admissible;
        \item The set $B'|_{(X^{\odot})_1} T^{\odot}(\,\cdot\,) \mathrm{B}_{X^\odot}$
        is a uniformly integrable subset of $\Ll^1([0,1];U')$.
    \end{enumerate}
    Then b\textup{)} implies a\textup{)}. If $U'$ has the Radon--Nikodým property, then a\textup{)} implies b\textup{)}.
\end{theorem}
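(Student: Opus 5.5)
For b) $\Rightarrow$ a), I would apply Theorem \ref{thm:ui_C} to $\Psi = \tilde\Psi_1\colon X^\odot \to \Ll^1([0,1];U')$, which is bounded by Proposition \ref{prop:sun_duality}. This gives a Young function $F_0$ (for instance the $\Delta_2$ function $G$ from part a) of that theorem) with
\[
\norm{\tilde\Psi_1 x^\odot}_{\Ll_{F_0}([0,1];U')} \leq C\norm{x^\odot}_{X^\odot}.
\]
Set $F \coloneq \tilde F_0$, the complementary Young function. For a $U$-valued step function $u$ (or $u\in\Cc([0,1];U)$), the element $\Phi_1 u \in X$ satisfies the duality identity
\[
\spr{\Phi_1 u}{x^\odot} = \int_0^1 \spr{u(s)}{B'T^\odot(1-s)x^\odot}\dd s .
\]
I would verify this first for $x^\odot\in D(A^\odot)$ and smooth $u$, where everything is classical, and then extend by density and the $\Ll^1$-boundedness of $\tilde\Psi_1$. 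Hölder's inequality for Orlicz spaces, together with the reflection $s\mapsto 1-s$, then gives
\[
\abs{\spr{\Phi_1u}{x^\odot}} \leq 2C\norm{u}_{\Ll_F}\norm{x^\odot}.
\]

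The key remaining point is to recover $\norm{\Phi_1 u}_X$ from pairings with $X^\odot$ alone. Here I would use that $X^\odot$ is norming: $\norm{x}\le M\sup_{\norm{x^\odot}\le1}\abs{\spr{x}{x^\odot}}$, which is standard since $\lambda R(\lambda,A')x'\to x'$ weak$^*$ and $R(\lambda,A')x'\in D(A')\subset X^\odot$. This makes $\Phi_1$ bounded on simple functions in the $\Ll_F$-norm, and density of simple functions in $\Ee_F$ yields $\Ee_F$-admissibility. Well-definedness of the extension, i.e. that it coincides with the integral formula with values in $X$, follows by the usual closure argument, because $\Phi_1$ is already continuous into $X_{-1}$ with respect to the $\Ll^1$ norm.

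For a) $\Rightarrow$ b), assuming $U'$ has the Radon--Nikodým property, I would dualize. The dual of $\Ee_F([0,1];U)$ is $\Ll_{\tilde F}([0,1];U')$ precisely under the RNP of $U'$, with Orlicz norm equivalent to the dual norm. Hence $\Phi_1'\colon X'\to \Ll_{\tilde F}([0,1];U')$ is bounded. Restricted to $X^\odot$ and composed with the reflection $s\mapsto 1-s$, it agrees with $\tilde\Psi_1$ by the same duality identity, tested against simple $u$. So $\tilde\Psi_1(\ball_{X^\odot})$ is bounded in $\Ll_{\tilde F}$, and de la Vallée Poussin (the direction b) $\Rightarrow$ c)) gives uniform integrability. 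Reflection clearly preserves uniform integrability.

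I expect the main obstacle to be the rigorous justification of the duality identity with the unbounded $B'|_{(X^\odot)_1}$, together with the norming step. I would handle this with resolvent/Yosida approximations $\lambda R(\lambda,A^\odot)x^\odot$ and the relation $(X_{-1})^\odot = D(A^\odot)$.
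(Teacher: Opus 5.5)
Your proposal is correct and follows the paper's proof essentially step for step. For b)$\Rightarrow$a) you apply Theorem~\ref{thm:ui_C} to $\tilde\Psi_1$, then use the duality identity, H\"older's inequality with the complementary Young function, the norming property of $X^\odot$ and a density argument; for a)$\Rightarrow$b) you use the representation $\Ee_F([0,1];U)'\simeq\Ll_{\tilde F}([0,1];U')$ under the Radon--Nikod\'ym property followed by de la Vall\'ee Poussin, exactly as the paper does.

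The differences are cosmetic. You prove the norming property via $\lambda R(\lambda,A')x'\to x'$ weak$^*$ instead of citing van Neerven. You also test with step functions rather than continuous ones; this is fine as long as the dense class is interval step functions or continuous functions, since for these $\Phi_1u\in X$ is known a priori. Finally, your explicit $X_{-1}$-closure argument identifying the extension on $\Ee_F$ is slightly more careful than the paper's one-line density remark.
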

\begin{proof}
We first show that b)\ implies a). To that end, let $B'|_{(X^\odot)_1} T^\odot(\,\cdot\,)\mathrm{B}_{X^\odot}$ be uniformly integrable. 
    For each $u \in \Cc([0,1];U)$, due to admissibility of $B$,
    $\Phi_1 u = \int_0^1 T(1-s) Bu(s) \dd s$ is a well-defined element of $X$. Proceeding as in the proof of \cite[Thm.\ 3.1]{aroraAdmissibleOperatorsSundual2025}, we consider $u \in \Cc([0,1];U)$ and 
    $x^\odot \in D(A^\odot)$ and compute
    \begin{align*}
        \abs{\spr{\Phi_1 u}{x^\odot}_{X\times X'}} &= \Abs{\Spr{\int_0^1 T(1-s)Bu(s) \dd s}{x^\odot}_{X\times X'}}\\
        &= \Abs{\int_0^1{\spr{u(1-s)}{B'|_{(X^\odot)_1}T^{\odot}(s)x^\odot}}_{U\times U'}\dd s}.
    \end{align*}
    Note that $T^\odot(s) x^\odot \in (X^\odot)_1 = D(A^\odot)$ for $x^\odot \in D(A^\odot)$ by the strong continuity of the sun dual semigroup $T^\odot$, see, e.g., \cite[Lem.\ 1.3~(ii)]{engel_one-parameter_2000}.

    By Theorem \ref{thm:ui_C} above, uniform integrability of $B'|_{(X^{\odot})_1} T^{\odot}(\,\cdot\,) \mathrm{B}_{X^\odot}$ leads to the existence of a Young function $\tilde F$ such that 
    \[
        M\coloneq\sup_{x^\odot\in \ball_{X^\odot}}\norm{B'|_{(X^\odot)_1}T^\odot(\,\cdot\,)x^\odot}_{\Ll_{\tilde F}([0,1];U')}< \infty.
    \]
    Since $u$ is a continuous function with values in $U$, $u$ lies in any $U$-valued Orlicz space
    on $[0,1]$. In particular, if $F$ is the complement of $\tilde F$ from above, it follows
    that $u \in \Ee_F([0,1];U)$.
    
    An application of the generalized
    Hölder inequality 
    (see, e.g., \cite[Lem.\ 1.2.19]{hosfeldCharacterizationOrliczAdmissibility2023}, \cite[Prop.\ III.3.1, Rem.\ p.\ 58]{raoTheoryOrliczSpaces1991},
\cite[Thm.\ 4.8.5, Thm.\ 4.8.7]{pickFunctionSpaces2013}) 
     yields
    \begin{align*}\Abs{\int_0^1{\spr{u(1-s)}{B'|_{(X^\odot)_1}T^{\odot}(s)x^\odot}}_{U \times U'}\dd s} &\leq 2 \norm{u}_{\Ll_F([0,1];U)}\norm{B'|_{(X^\odot)_1}T^{\odot}(\,\cdot\,)x^\odot}_{\Ll_{\tilde F}([0,1];U')}.
    \end{align*}
    Hence
    \[
    \abs{\spr{\Phi_1u}{x^\odot}_{X\times X'}} \leq 2 \norm{u}_{\Ll_F([0,1];U)}\norm{B'|_{(X^\odot)_1}T^{\odot}(\,\cdot\,)x^\odot}_{\Ll_{\tilde F}([0,1];U')}
    \]
    holds for $u \in \Cc([0,1];U)$ and 
    $x^\odot \in D(A^\odot)$. 
    By density of $D(A^\odot) = 
    (X^\odot)_1$, this estimate is also valid for any 
    $x^\odot \in X^\odot$.

As $X^\odot$ norms $X$, see, e.g., \cite[Thm.\ 1.3.5]{neervenAdjointSemigroupLinear1992}, with $m \coloneq \limsup_{t \searrow 0}\norm{T(t)}_{\LL(X)}$
it follows that 
\[\norm{x}_X \leq m \sup_{x^\odot \in \ball_{X^\odot}}\abs{\spr{x}{x^\odot}_{X,X'}}.\]
We combine the above estimates and get
\begin{align*}
    \norm{\Phi_1 u}_X &\leq m \sup_{x^\odot \in B_{X^\odot}} \abs{\spr{\Phi_1 u}{x^\odot}_{X\times X'}}\\
    &\leq 2 m \norm{u}_{\Ll_F([0,1];U)}\sup_{x^\odot \in B_{X^\odot}}\norm{B'|_{(X^\odot)_1}T^{\odot}(\,\cdot\,)x^\odot}_{\Ll_{\tilde F}([0,1];U')}\\
    &\leq 2 m M \norm{u}_{\Ll_F([0,1];U)}
\end{align*}
for any $u \in \Cc([0,1];U)$. Then, due to the density of $\Cc([0,1];U)$ in $\Ee_F([0,1];U)$, see, e.g., \cite[Lemma~1.2.28~(iii)]{hosfeldInputtostateStabilityClasses2025}, the inequality also extends to $u\in\Ee_F([0,1];U)$. Thus $B$ is $\Ee_F$-admissible.

For the converse, assume that $B$ is $\Ee_F$-admissible for
some Young function $F$.
Then there is $C\geq 0$ such that for any $u \in \Ee_F([0,1];U)$ we have
\[\Norm{\int_0^1 T(1-s) Bu(s) \dd s}_X \leq C \norm{u}_{\Ll_F([0,1];U)}.\]

Since $\norm{x}_X = \sup_{x'\in \ball_{X'}}\abs{\spr{x}{x'}_{X\times X'}} \geq 
\sup_{x^\odot\in \ball_{X^\odot}}\abs{\spr{x}{x^\odot}_{X\times X'}}$, the inequality
\[
    \sup_{x^\odot \in \mathrm{B}_{X^\odot}} \Abs{ \int_0^1 \spr{u(1-s)}{B'|_{(X^\odot)_1} T^\odot(s) x^\odot}_{U\times U'}\dd s} \leq
    C \norm{u}_{\Ll_F([0,1];U)}
\]
is valid for each $u \in \Ee_F([0,1];U)$. Defining $R_1 \in \LL(\Ll^1([0,1];U'))$ by
$f \mapsto f(1-\,\cdot\,)$, this implies that for every $x^\odot \in \ball_{X^\odot}$
\[f_{x^\odot}\colon u \mapsto \int_0^1 \spr{u(s)}{R_1B'|_{(X^\odot)_1} T^\odot(s) x^\odot}_{U\times U'}\dd s\]
is a bounded functional with norm $\norm{f_{x^\odot}}_{\Ee_F([0,1];U)'}$ uniformly bounded
in $x^\odot \in \mathrm{B}_{X^\odot}$ by $C$. 

Now we suppose $U'$ has the Radon--Nikodým property so that $\Ee_F([0,1];U) \simeq \Ll_{\tilde F}([0,1];U')$; see
\cite[Prop.\ 1.2.30]{hosfeldInputtostateStabilityClasses2025}. The representation
theorem yields a function $g_{x^\odot} \in \Ll_{\tilde F}([0,1];U')$
for each $x^\odot \in \ball_{X^\odot}$
such that $f_{x^\odot} = \int_0^1 \spr{\,\cdot\,}{g_{x^\odot}}_{U\times U'} \dd s$
and $\norm{g_{x^\odot}}_{\Ll_{\tilde F}([0,1];U')} \leq C$. Thus $g_{x^\odot} =  R_1 B'|_{(X^\odot)_1}T^\odot(\,\cdot\,)x^\odot$. Since $R_1$ is isometric with respect to $\norm{\,\cdot\,}_{\Ll_{\tilde F}}$, the set $\left.B'\right|_{(X^\odot)_1}T^\odot(\,\cdot\,)\ball_{X^\odot}$ is bounded in
$\Ll_{\tilde F}$, and therefore also uniformly integrable by de la Vallée Poussin's theorem.
\end{proof}
\begin{remark}
    This result can be compared to duality results for admissibility of control operators and
    observation operators. A classical result in this area
    was obtained by
    Weiss \cite[Thm.\ 6.9]{weissAdmissibleObservationOperators1989}, who formulated a duality theorem 
    for $\Ll^p$-admissible operators. Hosfeld \cite[Thm.\ 2.2.9]{hosfeldInputtostateStabilityClasses2025}, see also \cite[Thm.\ 5.2]{hosfeldCharacterizationOrliczAdmissibility2023}, 
    then generalized these results to the scale of Orlicz spaces. Note that
    these results make use of the structural assumption that the adjoint semigroup $T'$ is
        strongly continuous on $X'$. We also remark that the proof of Theorem \ref{thm:equi_for_B} above goes along
    some of the same lines as the proof of \cite[Thm.\ 2.2.9]{hosfeldInputtostateStabilityClasses2025}.

    For example, using Hosfeld's result \cite[Thm.\ 2.2.9]{hosfeldInputtostateStabilityClasses2025}, if 
    $T'$ is strongly continuous on $X'$ and $B$ is even admissible with respect to the larger space $\Ll_F([0,1];U)$, by way of Orlicz-boundedness it directly follows that $B'T'(\,\cdot\,)\ball_{X'}$
    is uniformly integrable, without needing to assume that $U'$ has the Radon--Nikodým property;
    in this case, the proof of ``a) $\Rightarrow$ b)'' from above is superfluous.

    As an aside on the Radon--Nikodým property: if $U'$ is separable or reflexive, then $U'$ has the Radon--Nikodým property by \cite[Thm.\ 1.3.21]{hytonenAnalysisBanachSpaces2016}.
    More precisely, it is known that $U'$ is Radon--Nikodým if and only if $U$ is an Asplund
    space, a property that is related to differentiability conditions on the norm of the Banach space $U$; see, e.g., \cite{diestelVectorMeasures1977,guiraoRenormingsBanachSpaces2022}.
\end{remark}
\subsection{Weak compactness}
As in Section \ref{sec:ui}, weak compactness is a useful tool to obtain uniform integrability. Given a $\Cc$-admissible $B\in \LL(U,X_{-1})$, such that 
the associated input operator $\Phi_1\colon \Cc([0,1];U) \to X$ is bounded, we show that weak compactness of
$\Phi_1$ suffices to show that $B$ is even Orlicz-admissible for some Young 
function.
The 
starting point of $\Ll^1$-boundedness of the formal dual operator $\tilde\Psi_1$ is provided by
\cite[Thm.\ 3.1]{aroraAdmissibleOperatorsSundual2025}. For later reference, we record
the following observation:
\begin{remark}\label{rem:isometryL1}
The embedding $\Ll^1([0,1];U') \hookrightarrow \Cc([0,1];U)'$ defined by \[f \mapsto T_f = \biggl(g \mapsto
\int_0^1 \spr{f}{g}_{U'\times U} \dd x\biggr)\] is an isometry. This can be seen using the Dinculeanu--Singer
representation of operators on spaces of Banach space-valued continuous functions, see \cite[182]{diestelVectorMeasures1977} and \cite[§19.3, Thm.\ 2]{dinculeanuVectorMeasures1967}. Due to the representation theorem, $\norm{T_f}$ is equal to the total variation of some representing measure. In this case, the representing
measure is the weighted Lebesgue measure $f \dd x$, with total variation (cf., e.g., \cite[Lem.\ 1.3.8]{hytonenAnalysisBanachSpaces2016}) given by $\norm{f}_{\Ll^1([0,1];U')}$. Thus
$f \mapsto T_f$ is an isometric embedding.
\end{remark}
\begin{theorem}\label{thm:mainwcompactness}
Let $X$ and $U$ be Banach spaces and assume $B$ is $\Cc$-admissible for a \czero-semigroup
$T$ on $X$.
Suppose that $\Phi_1\colon \Cc([0,1];U) \to X$, $u\mapsto\int_0^1 T(1-s)Bu(s)\dd s$ is weakly compact. Then there is a Young function $F$ such that $B$ is $\Ee_F$-admissible.
In particular, the operator $B$ is then also zero-class
    $\Cc$-admissible and zero-class
    $\Ll^\infty$-admissible.
\end{theorem}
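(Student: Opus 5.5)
The plan is to reduce to Theorem \ref{thm:equi_for_B}. By that result it suffices to show that the set $M \coloneq B'|_{(X^\odot)_1}T^\odot(\,\cdot\,)\ball_{X^\odot}$ is uniformly integrable in $\Ll^1([0,1];U')$. Proposition \ref{prop:sun_duality} already gives that $\tilde\Psi_1\colon X^\odot\to\Ll^1([0,1];U')$ is bounded. The strategy is to express $\tilde\Psi_1$, up to the reflection $R_1$, as a restriction of the adjoint $\Phi_1'$. Then weak compactness passes from $\Phi_1$ to $\Phi_1'$ by Gantmacher's theorem, and hence to $\tilde\Psi_1$.

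First, the computation in the proof of Theorem \ref{thm:equi_for_B} shows the following. For $u\in\Cc([0,1];U)$ and $x^\odot\in D(A^\odot)$,
\[
\spr{\Phi_1u}{x^\odot}=\int_0^1\spr{u(s)}{R_1\tilde\Psi_1x^\odot(s)}\dd s .
\]
By density and boundedness of $\tilde\Psi_1$, this identity extends to all $x^\odot\in X^\odot$. With the embedding $J\colon f\mapsto T_f$ of Remark \ref{rem:isometryL1}, this reads
\[
\Phi_1'|_{X^\odot}=J\circ R_1\circ\tilde\Psi_1 .
\]

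Second, since $\Phi_1$ is weakly compact, Gantmacher's theorem makes $\Phi_1'\colon X'\to\Cc([0,1];U)'$ weakly compact. Hence $\Phi_1'(\ball_{X^\odot})=J R_1\tilde\Psi_1(\ball_{X^\odot})$ is relatively weakly compact in $\Cc([0,1];U)'$. It lies in the closed subspace $J(\Ll^1([0,1];U'))$, which is closed because $J$ is an isometry. A subset of a closed subspace that is relatively weakly compact in the ambient space is relatively weakly compact in the subspace, since weak closures agree and the Hahn--Banach theorem identifies the weak topologies. As $J$ is an isometric isomorphism onto its range, $R_1\tilde\Psi_1(\ball_{X^\odot})$ is relatively weakly compact in $\Ll^1([0,1];U')$. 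Because $R_1$ is an isometric involution, the same holds for $M$.

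Third, relative weak compactness in $\Ll^1([0,1];U')$ implies uniform integrability, as recalled in Section \ref{sec:ui} via \cite[Thm.\ IV.2.4]{diestelVectorMeasures1977}. Theorem \ref{thm:equi_for_B} (b$\Rightarrow$a) then yields a Young function $F$ such that $B$ is $\Ee_F$-admissible. The zero-class statements follow from the Remark after the first proposition of Section \ref{sec:B_operators}: Orlicz-admissibility implies zero-class $\Ll^\infty$-admissibility (\cite[Proof of Prop.\ 5]{jacobContinuitySolutionsParabolic2019}), and hence zero-class $\Cc$-admissibility.

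The main obstacle I expect is the identification in the first step, that is, checking that $\Phi_1'$ restricted to $X^\odot$ really lands in the isometric copy of $\Ll^1$ and coincides with $JR_1\tilde\Psi_1$. This relies on the density argument and on the boundedness of $\tilde\Psi_1$, and care is needed with the reflection $s\mapsto1-s$. The weak-compactness transfer to the subspace is routine by comparison.
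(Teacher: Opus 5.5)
Your proposal is correct and follows the paper's proof essentially step by step. Both arguments use the sun-duality bound for $\tilde\Psi_1$ and Gantmacher's theorem for $\Phi_1'$. Both obtain the identification $\Phi_1'|_{X^\odot} = R_1\tilde\Psi_1$ by computing on $D(A^\odot)$ and passing to $X^\odot$ by density. Both transfer weak compactness into the closed (hence weakly closed) isometric copy of $\Ll^1([0,1];U')$ inside $\Cc([0,1];U)'$, get uniform integrability from relative weak compactness, and finish with Theorem~\ref{thm:equi_for_B}. The only difference is cosmetic: you argue with relative weak compactness of the image set and make the embedding $J$ explicit, whereas the paper speaks of weak compactness of the operator $\Phi_1'|_{X^\odot}$ into $\Ll^1$ and then uses the ideal property to absorb $R_1$.
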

\begin{proof}
As before, using the sun duality theorem \cite[Thm.\ 3.1]{aroraAdmissibleOperatorsSundual2025} we infer
 that 
 \[
    \tilde \Psi_1\colon X^\odot \to \Ll^1([0,1];U'),\qquad x^\odot \mapsto \left.B'\right|_{(X^\odot)_1}T^\odot(\,\cdot\,)x^\odot
\] 
defines a bounded linear map.
Due to Gantmacher's theorem, see \cite[Satz 1]{gantmacherUberSchwacheTotalstetige1940} and \cite[\,\!{3.5.13}]{megginsonIntroductionBanachSpace1998}, 
$\Phi_1' \colon X' \to \Cc([0,1];U)'$ is also weakly compact. 
Using $\Phi_1'$, we show that $\tilde \Psi_1$ is weakly compact. 

The proof is divided 
into two steps. First, we show that $\tilde\Psi_1$ is a shifted version 
of $\Phi_1'$ restricted to the sun dual space $X^\odot \subset X'$. Second,
we show that weak compactness of $\Phi_1'$ transfers to $\tilde \Psi_1$.

In the first step, we follow a strategy that is similar to the proof of \cite[Thm.\ 3.1]{aroraAdmissibleOperatorsSundual2025}.
Let $R_1 \in \LL(\Ll^1([0,1];U'))$ be the reflection operator $(R_1 f)(s) = f(1-s)$, $s \in [0,1]$.

We show that $\Phi_1'|_{X^\odot} = R_1 \tilde \Psi_1$.
Consider $u \in \Cc([0,1];U)$ and $x^\odot \in D(A^{\odot}) \subset X^\odot$.
We have
\[
    \spr{u}{\left.\Phi_1'\right|_{X^\odot} x^\odot}_{\Cc([0,1];U)\times\Cc([0,1];U)'} = \spr{\Phi_1 u}{x^\odot}_{X\times X'}
\]
by definition of the adjoint operator. Next, we expand the input operator $\Phi_1$ to see that 
\begin{align*}
    \spr{\Phi_1 u}{x^\odot}_{X \times X'} &= \Spr{\int_0^1 T(1-s) Bu(s) \dd s}{x^\odot}_{X\times X'}\\
    &= \int_0^1 \spr{T(1-s) Bu(s)}{x^\odot}_{X_{-1} \times (X_{-1})^\odot}\dd s\\
    &= \int_0^1 \spr{u(s)}{B'|_{(X^\odot)_1}T^\odot(1-s)x^\odot}_{U\times U'} \dd s\\
    &= \int_0^1 \spr{u(s)}{[{R}_1\tilde\Psi_1x^\odot](s)}_{U\times U'} \dd s.
\end{align*}
Once we employ the embedding of $\Ll^1([0,1];U')$ in $\mathrm C([0,1];U)'$, we conclude that for any $u \in \Cc([0,1];U)$ and any $x^\odot \in D(A^\odot)$
it holds that 
\[
    \spr{u}{\left.\Phi_1'\right|_{X^\odot} x^\odot}_{\Cc([0,1];U)\times\Cc([0,1];U)'} = \spr{u}{R_1 \tilde \Psi_1 x^\odot}_{\Cc([0,1];U)\times\Cc([0,1];U)'};
\]
density of $D(A^\odot)$ in $X^\odot$ allows us to infer that 
$\left.\Phi_1'\right|_{X^\odot} = R_1 \tilde \Psi_1$. In particular,
$\left.\Phi_1'\right|_{X^\odot}$ maps into $\Ll^1([0,1];U')$ seen as a subspace
of $\Cc([0,1];U)'$.

Now we show that $\tilde\Psi_1$ is weakly compact as an operator into
$\Ll^1([0,1];U')$. As a restriction to $X^\odot$ of a weakly compact operator on
$X'$, $\Phi_1'|_{X^\odot}$ is weakly compact into $\Cc([0,1];U)'$.
As noted above, the range of $\Phi_1'|_{X^\odot}$ is contained in $\Ll^1([0,1];U')$. Moreover, $\Ll^1([0,1];U')$, under the identification with the integral functionals with $\Ll^1$-densities, is  a \emph{closed} subspace of 
    $\Cc([0,1];U)'$. 
    
The closedness of the subspace $\Ll^1([0,1];U')$ follows from the Dinculeanu--Singer representation theorem, see Remark \ref{rem:isometryL1} above.
Since the $\Ll^1$ norm of a function $f$ agrees with the variation of the associated vector measure with density $f$, see, e.g., \cite[Lem.\ 1.3.8]{hytonenAnalysisBanachSpaces2016}, the representation theorem implies that 
    $\norm{f}_{\Ll^1([0,1];U')} = \norm{T_f}_{\Cc([0,1];U)'}$. Since $\Ll^1([0,1];U')$ is a Banach space, it follows
    that its isometric image is closed in $\Cc([0,1];U)'$.
    
    Since the weak topology of $\Ll^1([0,1];U')$ coincides with the weak topology induced by $\Cc([0,1];U)'$
    (cf, e.g., \cite[Prop.\ 2.5.22]{megginsonIntroductionBanachSpace1998}) and $\Ll^1([0,1];U')$ is even weakly
    closed in $\Cc([0,1];U)'$ by \cite[Cor.~V.1.5]{conwayCourseFunctionalAnalysis2007}, it follows that 
    $\Phi_1'|_{X^\odot} \in \LL(X^\odot, \Ll^1([0,1];U')$ is weakly compact. By the ideal property
    of weakly compact operators $R_1 \Phi_1'|_{X^\odot} = \tilde \Psi_1$ is also weakly compact; the Dunford--Pettis theorem implies that $\tilde\Psi_1(\ball_{X^\odot})$ is uniformly integrable.    
    
    By Theorem \ref{thm:equi_for_B}, it follows that there is a Young function 
    $F$ such that $B$ is $\Ee_F$-admissible. The other claims follow directly from 
    Orlicz-admissibility of $B$; see, e.g., \cite[Proof Prop.\ 5]{jacobContinuitySolutionsParabolic2019}.
\end{proof}
\begin{corollary}\label{cor:reflexiveforB}
    If $X$ is a reflexive Banach space, then any $\Cc$-admissible operator is $\Ee_F$-admissible for some 
    Young function $F$.
\end{corollary}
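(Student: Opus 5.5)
The plan is to reduce Corollary \ref{cor:reflexiveforB} directly to Theorem \ref{thm:mainwcompactness}. Let $B \in \LL(U,X_{-1})$ be $\Cc$-admissible for the \czero-semigroup $T$ on the reflexive space $X$. By definition, the input map
\[
\Phi_1\colon \Cc([0,1];U) \to X,\qquad u \mapsto \int_0^1 T(1-s)Bu(s)\dd s,
\]
is then a bounded linear operator. It therefore suffices to check that $\Phi_1$ is weakly compact; Theorem \ref{thm:mainwcompactness} then yields a Young function $F$ such that $B$ is $\Ee_F$-admissible.

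Weak compactness follows from reflexivity of the target space. Since $\Phi_1$ is bounded, the image $\Phi_1(\ball_{\Cc([0,1];U)})$ is a bounded subset of $X$. In a reflexive space the closed unit ball is weakly compact (Kakutani's theorem), so every bounded subset of $X$ is relatively weakly compact. Hence every bounded operator into $X$ is weakly compact, and in particular $\Phi_1$ is.

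No step here is a genuine obstacle, since all the substance lies in Theorem \ref{thm:mainwcompactness}: the sun-dual identification $\Phi_1'|_{X^\odot} = R_1\tilde\Psi_1$, the closedness of $\Ll^1([0,1];U')$ in $\Cc([0,1];U)'$, and the de la Vallée Poussin argument. The only point to watch is that the reflexivity assumption is on $X$, the codomain of $\Phi_1$, and not on $U$ or on $\Cc([0,1];U)$ (which is never reflexive unless $U=\{0\}$). Weak compactness therefore has to come from the target space, which is exactly the role reflexivity plays here.

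As a byproduct, Theorem \ref{thm:mainwcompactness} also shows that such $B$ are zero-class $\Cc$- and $\Ll^\infty$-admissible.
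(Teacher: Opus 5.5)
Your proposal is correct and matches the paper's intended argument: the corollary is stated as an immediate consequence of Theorem \ref{thm:mainwcompactness}. Reflexivity of the target space $X$ makes the bounded input map $\Phi_1\colon \Cc([0,1];U)\to X$ automatically weakly compact, and that theorem then gives $\Ee_F$-admissibility.
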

\begin{remark}
    It is known that $F\in \Delta_2$ can not be obtained from the above result in 
    general, even if $X$ is a separable Hilbert space. This was noted in \cite[Ex.~5.2, Rem.~4.3]{Jacob_2018}, where the authors note that requiring the $\Delta_2$ growth
    condition for the Young function $F$ would imply that the operator $B$ would need to be $\Ll^p$-admissible
    for some finite $p$. By constructing a counterexample, i.e., a $B$ operator, admissible with respect to $\Ll^\infty$ and not $\Ll^p$-admissible for any $p < \infty$, the authors show
    that this need not always be the case.
\end{remark}
\begin{remark}
    Question 27 in \cite{jacobContinuitySolutionsParabolic2019} on Orlicz-admissibility for
    analytic semigroups on Hilbert spaces $X$ and $\Ll^\infty$-admissible scalar operators $b \in \LL(\C,X_{-1})$ is resolved
    positively by Corollary \ref{cor:reflexiveforB} above.
\end{remark}
\subsection{Balancing assumptions on $X$ and $U$}
In this section we present a selection of conditions on the 
Banach spaces $X$ and $U$ that are sufficient to conclude that specific
classes of $\Cc$-admissible operators are all 
also $\Ee_F$-admissible for some Young function. In particular, we weaken the assumptions that $X$ is reflexive or of nontrivial type at the price of imposing extra conditions on the space $U$.
\subsubsection{Pełczyński's property (V)}\label{sec:prop(V)}
Using known results on Pełczyński's property (V), we present a set of conditions on Banach 
spaces $X$ and $U$ that imply that any operator $\Cc([0,1];U) \to X$ is weakly compact.
Property (V) traces back to Pełczyński's 
article \cite{pelczynskiBanachSpacesWhich1962}.
We recall the definition from \cite{Kalton1985}, see also \cite{Randrianantoanina1996,pelczynskiBanachSpacesWhich1962}. 

    Let $X$ and $Y$ be Banach spaces.
    An operator $T \in \LL(X,Y)$ is called unconditionally converging if $T$ maps each
    \wuc{} series $\sum_{n\in\N} x_n$ into a unconditionally $Y$-convergent series $\sum_{n\in \N} Tx_n$. 
    If for every Banach space $Y$ each unconditionally converging $T \in 
    \LL(X,Y)$ is also weakly compact, then $X$ is said to
    have (Pełczyński's) property (V).

    In \cite[Prop.\ 3]{pelczynskiBanachSpacesWhich1962}, it is shown that 
    if $X$ has (V) and $Y\not\supset c_0$, then any $T \in \LL(X,Y)$ is weakly compact.
    In our application, we will assume that $\Cc([0,1];U)$ has (V) and that $X \not\supset c_0$. 

    We summarize some known results 
    from the literature on Pełczyński's property (V).

    First, reflexive spaces have property (V), see, e.g., \cite[Prop.\ 7]{pelczynskiBanachSpacesWhich1962}. If $X$ has (V), then $X \not\supset c_0$
    and reflexivity of $X$ are equivalent by \cite[Prop.\ 8]{pelczynskiBanachSpacesWhich1962}.

    On property (V) for spaces of vector-valued continuous functions: if 
    $U$ is separable, then due to \cite[Thm.\ 1]{Randrianantoanina1996}, $\Cc([0,1];U)$ 
    has (V) if and only if $U$ does. Even if $U$ is non-separable, $U$ has (V) if 
    $\Cc(K;U)$ has (V); see, e.g., \cite[Proof\ Thm.\ 1]{Randrianantoanina1996},  \cite[Proof Thm.\ 6]{Kalton1985} and Lemma 
    \ref{lem:compl(V)} and Remark \ref{rem:Pelczynski_2} in the sequel. It follows 
    that $X$ must be reflexive if $X$ does not contain $c_0$ and $\Cc([0,1];X)$ has 
    property (V), which limits applicability of the argument if $U = X$.

    Lastly, by \cite[Thm.\ 9]{battLinearBoundedTransformations1969}, cf.\ also \cite{battIntegraldarstellungenLinearerTransformationen1967}, if $U$ is reflexive
and $X \not\supset c_0$, then any bounded operator $\Cc([0,1];U) \to X$ is is weakly compact, without
needing $U$ to be separable.
\begin{corollary}\label{cor:pelczynski_for_B}
    Let $X$ be a Banach space that does not contain a subspace isomorphic
    to $c_0$. Assume further that $U$ is a Banach space such that the
    space $\Cc([0,1];U)$ has Pełczyński's property (V). Then, if $T$
    is a \czero{}-semigroup and $B \in \LL(U,X_{-1})$ is $\Cc$-admissible
    for $T$, then there is a Young function $F$ such that $B$ is 
    $\Ee_F$-admissible for $T$.
\end{corollary}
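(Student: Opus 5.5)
The plan is to reduce the statement to Theorem~\ref{thm:mainwcompactness}. That theorem requires only that the input operator $\Phi_1\colon \Cc([0,1];U)\to X$, $u\mapsto \int_0^1 T(1-s)Bu(s)\dd s$, be weakly compact. So it suffices to show that, under the present hypotheses on $X$ and $U$, every bounded operator $\Cc([0,1];U)\to X$ is weakly compact. We then apply this to $\Phi_1$, which is bounded because $B$ is $\Cc$-admissible.

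For the weak compactness, we invoke Pełczyński's result \cite[Prop.\ 3]{pelczynskiBanachSpacesWhich1962}, recalled in Section~\ref{sec:prop(V)}: if a Banach space $Z$ has property (V) and $Y\not\supset c_0$, then every $T\in\LL(Z,Y)$ is weakly compact. For completeness we sketch why.

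\begin{itemize}
\item Let $T\in\LL(Z,Y)$ and let $\sum_n z_n$ be a \wuc{} series in $Z$. Then $\sum_n Tz_n$ is \wuc{} in $Y$, since $y'\circ T\in Z'$ for every $y'\in Y'$.
\item By the Bessaga--Pełczyński theorem, in a space not containing $c_0$ every \wuc{} series is unconditionally convergent. Hence $T$ is unconditionally converging.
\item Property (V) of $Z$ then gives that $T$ is weakly compact.
\end{itemize}

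We apply this with $Z=\Cc([0,1];U)$, which has (V) by assumption, and $Y=X$, which contains no copy of $c_0$ by assumption. This shows that $\Phi_1$ is weakly compact.

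Theorem~\ref{thm:mainwcompactness} now yields a Young function $F$ such that $B$ is $\Ee_F$-admissible, as claimed.

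No step here is a genuine obstacle, since the argument only chains results already stated. The one point to check is that $\Phi_1$ really is a bounded operator on all of $\Cc([0,1];U)$ with values in $X$ (not merely in $X_{-1}$). This is exactly the definition of $\Cc$-admissibility, so the hypotheses of Theorem~\ref{thm:mainwcompactness} are met.
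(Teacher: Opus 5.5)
Your proposal is correct and matches the paper's proof: $\Cc$-admissibility makes $\Phi_1$ bounded, Pełczyński's Prop.~3 (property (V) of $\Cc([0,1];U)$ together with $X\not\supset c_0$) makes it weakly compact, and Theorem~\ref{thm:mainwcompactness} gives $\Ee_F$-admissibility. Your sketch of that proposition via the Bessaga--Pełczyński characterization, that in a space without $c_0$ every \wuc{} series converges unconditionally, is a correct unpacking of the result the paper only cites.
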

\begin{proof}
    As $B$ is $\Cc$-admissible, the input operator $\Phi_1$ is 
    bounded $\Cc([0,1];U) \to X$. By the preceding remarks, $\Phi_1$ is weakly compact. The claim then follows from Theorem \ref{thm:mainwcompactness}.
\end{proof}
\subsubsection{The Grothendieck property}
A Banach space $X$ is called a Grothendieck space
if sequences in $X'$ converge weakly if and only if they converge weakly*, see, e.g., \cite[97]{diestel_grothendieck_1973}; for more
equivalent formulations of the Grothendieck property, see also \cite[Ch.\ Five, §2, Cor.\ 5]{diestelGeometryBanachSpaces1975}. Compare also the survey \cite{gonzalez_grothendieck_2021}.

Some examples for Grothendieck spaces are reflexive spaces \cite[97]{diestel_grothendieck_1973},
spaces of continuous functions $\Cc(K)$ for compact sets $K$ satisfying certain disjointness
assumptions for closures in $K$ (e.g., Stonean and $\sigma$-Stonean spaces $K$) \cite[269, 271]{gonzalez_grothendieck_2021}. By identifying $\Ll^\infty(\Omega,\mu)$ with a suitable $\Cc(K)$ space,
it follows that $\Ll^\infty(\Omega, \mu)$ is Grothendieck, see \cite[Prop.\ 4.1.5]{gonzalez_grothendieck_2021}.

If $X$ is Grothendieck and $Y$ is a \emph{weakly compactly
generated} Banach space, then any $S\in\LL(X,Y)$ is weakly compact, see, e.g., \cite[Ch.\ Five, §2, Cor.\ 5]{diestelGeometryBanachSpaces1975}.
Note that, by definition, a space $Y$ is weakly compactly generated (\WCG{}) if there exists a weakly compact subset of $Y$ with dense span, see, e.g., \cite[143]{diestelGeometryBanachSpaces1975}. From \cite[143]{diestelGeometryBanachSpaces1975}, some examples for \WCG{} spaces are 
reflexive spaces, separable spaces, $\Ll^1(\mu)$ for $\sigma$-finite measures $\mu$ and the space $c_0$.
We also refer to \cite{guiraoRenormingsBanachSpaces2022} for more on \WCG{} Banach spaces.
\begin{theorem}\label{thm:grothendieck}
    Let $U$ be a Banach space such that $\Ll^\infty([0,1];U)$ has the Grothendieck
    property. Assume that $X$ is \WCG{}
    and let $T$ be a \czero{}-semigroup on $X$. Then any $\Ll^\infty$-admissible
    $B \in \LL(U,X_{-1})$ is also $\Ee_F$-admissible for some
    Young function $F$. In particular, $B$ is then also zero-class $\Ll^\infty$-admissible for $T$.
\end{theorem}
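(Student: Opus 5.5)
The plan is to reduce the statement to Theorem \ref{thm:mainwcompactness}. The input operator on $\Ll^\infty$ will be shown to be weakly compact using the Grothendieck property. Its restriction to the continuous functions is then still weakly compact.

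First, $\Ll^\infty$-admissibility of $B$ means that
\[
\Phi_1^\infty\colon \Ll^\infty([0,1];U)\to X,\qquad u\mapsto \int_0^1 T(1-s)Bu(s)\dd s
\]
is well defined and bounded. Since $\Ll^\infty([0,1];U)$ is Grothendieck and $X$ is \WCG{}, the cited result from \cite[Ch.\ Five, §2, Cor.\ 5]{diestelGeometryBanachSpaces1975} gives that $\Phi_1^\infty$ is weakly compact.

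Next, $\Cc([0,1];U)$ embeds isometrically into $\Ll^\infty([0,1];U)$: the essential supremum of a continuous function equals its supremum. On continuous inputs the two input maps agree, so $\Phi_1 = \Phi_1^\infty\circ\iota$, where $\iota$ denotes this embedding. In particular $B$ is $\Cc$-admissible. Moreover,
\[
\Phi_1(\ball_{\Cc([0,1];U)})\subset \Phi_1^\infty(\ball_{\Ll^\infty([0,1];U)}).
\]
The right-hand set is relatively weakly compact, so $\Phi_1\colon \Cc([0,1];U)\to X$ is weakly compact by the ideal property. Theorem \ref{thm:mainwcompactness} now yields a Young function $F$ such that $B$ is $\Ee_F$-admissible, and also that $B$ is zero-class $\Ll^\infty$-admissible. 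The latter also follows directly from Orlicz-admissibility via \cite[Proof of Prop.\ 5]{jacobContinuitySolutionsParabolic2019}.

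The only delicate point is bookkeeping. We must confirm that the $\Ll^\infty$-input map restricted to $\Cc$ is exactly the $\Cc$-input map, which holds by definition. We must also check that the weak compactness criterion is applied in the right direction: the domain has to be Grothendieck and the codomain \WCG{}, and this matches the hypotheses. There is no genuine obstacle beyond invoking these facts.
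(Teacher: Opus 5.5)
Your proposal is correct and is essentially the paper's proof. The Grothendieck/\WCG{} criterion makes the $\Ll^\infty$ input map weakly compact, and Theorem~\ref{thm:mainwcompactness} then gives the conclusion. You also spell out the restriction to $\Cc([0,1];U)$ through the isometric embedding, which the paper's one-line proof leaves implicit; this is worth doing, since Theorem~\ref{thm:mainwcompactness} is stated for the input map on $\Cc([0,1];U)$.
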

\begin{proof}
     Since $\Phi_1 \in \LL(\Ll^\infty([0,1];U),X)$ is weakly compact, this follows
     from Theorem \ref{thm:mainwcompactness}.
     \end{proof}
\begin{remark}
    Assuming that $\Ll^\infty([0,1];U)$ has the Grothendieck property places severe restrictions on the class of possible Banach spaces $U$. In particular, by \cite[Prop.\ 5.4.5 (2)]{gonzalez_grothendieck_2021}, if $\mu$ is 
    non-atomic and $\Ll^\infty(\mu, U)$
    is a Grothendieck space, then $U$ not only has nontrivial type, but $U$ is 
    also reflexive. So even in the case that $\Ll^\infty([0,1];X)$ was
    a Grothendieck space for some $X$, Orlicz-admissibility for any $\Cc$-admissible $B$ (and thus in particular for any $\Ll^\infty$-admissible $B$) already 
    follows from the necessary reflexivity of the space $X$, which rules 
    out an application in the maximal regularity context with $X=U$. For this reason, the main application of this Grothendieck criterion is in the situation $U = \C$.
\end{remark}
\begin{remark}
If the space $X$ is \emph{sun-reflexive} with respect to the semigroup $T$, more can be said.
We cite the definition from \cite[7]{neervenAdjointSemigroupLinear1992}: The Banach space $X$ is sun-reflexive or $\odot$-reflexive if $j\colon X \to X^{\odot\prime}$, defined by $\spr{jx}{x^\odot}_{X^{\odot\prime},X^\odot}
\coloneq \spr{x^\odot}{x}_{X',X}$ for $x^\odot \in X^\odot$, surjects onto $X^{\odot\odot}$,
the space of strong continuity of the semigroup $T^{\odot\prime}$.

By \cite[37]{neervenAdjointSemigroupLinear1992}, if there is a \czero-semigroup
such that $X$ is sun-reflexive, then $X$ is \WCG. Hence, if $b \in \LL(\C,X_{-1})$
is $\Ll^\infty$-admissible (for some fixed generator $A$ on $X$; $A$ need not generate 
the semigroup that makes $X$ sun-reflexive), then $b$ is also $\Ee_F$-admissible
for some Young function $F$. This follows from Theorem \ref{thm:grothendieck} since
$\Ll^\infty([0,1])$ is Grothendieck.
\end{remark}
\begin{remark}
    If we choose $X = c_0$ and $U=\C$, given any semigroup $T$ on $c_0$, any $\Ll^\infty$-admissible 
    operator $b\in \LL(\C,(c_0)_{-1})$ must already be $\Ee_F$-admissible for some Young function $F$ due 
    to Theorem \ref{thm:grothendieck} as 
    $c_0$ is \WCG. In particular, $X=c_0$ and $U = \C$ is an example where $\Cc([0,1];U)$ has
    property (V) but Corollary \ref{cor:pelczynski_for_B} does not apply.
\end{remark}
\begin{remark}
    Question 28 in \cite{jacobContinuitySolutionsParabolic2019} on Orlicz-admissibility for $X=\Ll^p(\Omega,\mu)$ and any $b\in X_{-1}$ when $A$ has a bounded $\mathrm{H}^\infty$-functional calculus has a positive answer when $p > 1$ since $X$ is reflexive; when $p = 1$, we can also conclude Orlicz-admissibility if $\mu$ is $\sigma$-finite or $\mu$ is such that $\Ll^1(\Omega,\mu)$ is separable; since $\Phi_1\colon \Ll^\infty([0,1]) \to \Ll^1(\Omega,\mu)$ is then an operator from a Grothendieck space into a \WCG{} space, $\Phi_1$ is then also weakly compact.
\end{remark}
\subsubsection{Type assumptions}
Nontrivial type of $X$ can also be used to conclude Orlicz-admissibility:
\begin{theorem}\label{thm:nikishinforB}
    If $X$ has nontrivial type, then given a
    \czero{}-semigroup $T$ on $X$, and a Banach space $U$, every
    $\Cc$-admissible
    $B \in \LL(U,X_{-1})$ is $\Ee_F$-admissible for some Young function $F$.
\end{theorem}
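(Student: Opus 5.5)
The plan is to reduce to Theorem \ref{thm:equi_for_B} by verifying its condition b). That is, I want to show that $\tilde\Psi_1(\ball_{X^\odot}) = B'|_{(X^\odot)_1}T^\odot(\,\cdot\,)\ball_{X^\odot}$ is uniformly integrable in $\Ll^1([0,1];U')$. Since $B$ is $\Cc$-admissible, Proposition \ref{prop:sun_duality} says that $\tilde\Psi_1 \in \LL(X^\odot,\Ll^1([0,1];U'))$ is bounded. It therefore suffices to apply Corollary \ref{cor:typeforCs} with $W = X^\odot$ and $Y = U'$. For this, the only thing to check is that $X^\odot$ has nontrivial type.

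The key step is thus the geometric transfer: $X$ has nontrivial type $\Rightarrow$ $X^\odot$ has nontrivial type. I would first use the Remark preceding Corollary \ref{cor:typeforCs}, which says that $X$ has nontrivial type if and only if $X'$ has nontrivial type. Next, I would use that $X^\odot$ is a closed subspace of $X'$. Finally, nontrivial type passes to subspaces, as already used in the proof of Corollary \ref{cor:typeforCs} via \cite[219]{diestelAbsolutelySummingOperators1995}. This is immediate from the Maurey--Pisier description too: if $\ell^1_n$ were uniformly contained in $X^\odot$, it would be uniformly contained in $X'$.

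The one point needing care is that type is a property of the norm up to isomorphism, so it does not matter whether $X^\odot$ carries the norm inherited from $X'$ or an equivalent one. In the paper it carries the inherited norm, so nothing further is needed.

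Concluding: Corollary \ref{cor:typeforCs} gives uniform integrability of $\tilde\Psi_1(\ball_{X^\odot})$, which is statement b) of Theorem \ref{thm:equi_for_B}. The implication b) $\Rightarrow$ a) of that theorem needs no Radon--Nikodým assumption on $U'$, so it yields a Young function $F$ such that $B$ is $\Ee_F$-admissible.

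I expect no real obstacle. The only conceptual issue is recognizing that one should not try to show weak compactness of $\Phi_1$, which need not hold since $X$ with nontrivial type need not be reflexive. Instead one should work on the dual side, where Corollary \ref{cor:typeforCs} gives uniform integrability directly without weak compactness.
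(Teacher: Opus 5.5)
Your proposal is correct and takes essentially the same route as the paper. You transfer nontrivial type from $X$ to $X'$ and then to the closed subspace $X^\odot$, and you use sun duality to get $\tilde\Psi_1 \in \LL(X^\odot,\Ll^1([0,1];U'))$. You then obtain uniform integrability from Corollary~\ref{cor:typeforCs} (equivalently, from Proposition~\ref{prop:complementedforui}) and conclude via the implication b)~$\Rightarrow$~a) of Theorem~\ref{thm:equi_for_B}, which needs no Radon--Nikodým assumption.
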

\begin{proof}
    If $X$ has nontrivial type, then $X'$ has nontrivial type, see, e.g., \cite[Cor.\ 7.4.24]{hytonenAnalysisBanachSpaces2018}. The subspace $X^\odot$ of $X'$ then also
    has nontrivial type; see, e.g., \cite[Rem.\ 6.2.11\ (f)]{albiacTopicsBanachSpace2016}. By the sun duality theorem \cite[Thm 3.1]{aroraAdmissibleOperatorsSundual2025}, the map $x^\odot \mapsto B'|_{(X^\odot)_1}T^\odot(\,\cdot\,)x^\odot$ extends to a bounded map $X^\odot \to \Ll^1([0,1];U')$. As in the proof of Corollary
\ref{cor:typeforCs}, $X^\odot$ does not contain $\ell^1$. Due to
Proposition \ref{prop:complementedforui}, it follows that
$B'|_{(X^\odot)_1}T^\odot(\,\cdot\,)\mathrm{B}_{X^\odot}$ is 
uniformly integrable. By Theorem \ref{thm:equi_for_B}, $B$ is $\Ee_F$-admissible for some Young function $F$.
\end{proof}
\begin{remark}
It is open if the statement regarding Orlicz-admissibility from above can be proven under the assumption that $X$ has finite cotype. Note that $\operatorname{cotype}(X) < \infty$
does not imply $\operatorname{type}(X') > 1$; a counterexample
is $X = \ell^1$, see, e.g. \cite[Cor.\ 7.1.10, Sect.\ 7.1.c]{hytonenAnalysisBanachSpaces2018}.

Regarding factorization theorems: Pisier's factorization theorem, see, e.g, 
\cite[Thm.\ 2.4, Cor.\ 2.7]{pisierFactorizationOperatorsL_pinfty1986} and \cite[Thm.\ 7.2.4]{hytonenAnalysisBanachSpaces2018}, concerns
operators $\Cc(K) \to X$, where $K$ 
is a compact Hausdorff space and $X$ has finite cotype.
If $X$ has finite cotype and $U=\C$, the theorem could be used to conclude $\Ee_F$-admissibility 
for $\Cc$-admissible $B$.
However, $\Cc(K)$ has property (V), see, e.g., \cite[Thm.\ 1]{pelczynskiBanachSpacesWhich1962}; as the argument with property (V) does not need any cotype
assumption for $X$, only the weaker assumption that $X\not\supset c_0$, Pisier's theorem is not suitable for our application.
\end{remark}
\begin{remark}
    Question 24 in \cite{jacobContinuitySolutionsParabolic2019} on Orlicz-admissibility for $\Ll^\infty$-admissible operators is resolved positively if any of the preceding sufficient conditions of Section \ref{sec:B_operators} is met.
\end{remark}
\section{Applications}
\label{sec:applications}
\subsection{Admissibility for observation operators}\label{sec:admissibilityobservation}
In systems theory, the notion of admissibility is also well-studied in the context 
of \emph{observation operators}, where the notion is a pivotal part of well-posedness of 
linear systems. 
The general setting of ``semigroup control systems'', supporting the modeling of unbounded observation and control operators on Hilbert spaces, was introduced by Salamon in the fundamental paper  \cite{salamonInfiniteDimensionalLinear1987}. There, the notion of admissibility (with
respect to $\mathrm{Z} = \Ll^2$) of control operators appears as hypothesis (S2) and 
$\Ll^2$-admissibility of observation operators appears as hypothesis (S3).
Later, Weiss, in two seminal works \cite{weissAdmissibleObservationOperators1989,weiss_admissibility_1989} on admissible control and observation operators laid the foundation for the study of the property of admissibility on Banach spaces, providing many general results in the field. For example, an important result is
\cite[Thm.\ 6.9]{weissAdmissibleObservationOperators1989} on duality relations between $\Ll^p$-admissible observation and control operators.
In this context we also mention the comprehensive treatise \cite{staffans_well-posed_2005} on well-posed systems by Staffans; among the many results on admissibility contained therein, we highlight 
the treatment of admissibility with respect to the space of regulated functions, cf.\ \cite[Thm.\ 10.2.2]{staffans_well-posed_2005}. Moreover, the author also considers many special cases,
such as diagonal, normal and contractive semigroups on Hilbert spaces \cite[Sects.\ 10.6, 10.7]{staffans_well-posed_2005}.
See also the survey article \cite{jacobAdmissibilityControlObservation2004} by Jacob and Partington
for an overview of results on admissibility theory.

The present section will be devoted to the discussion of the consequences of Section \ref{sec:operatorsintoL1} for admissibility of observation operators. Let $W$ be a Banach space and $A$ generate a semigroup $T$ on $W$. An observation operator is
an operator $C\in \LL(W_1,Y)$; $C$ need not be bounded $W \to Y$.

Consider the following system:
\begin{align*}
    \dot x(t) &= Ax(t), \qquad t\geq 0,\ \\
    x(0) &= x_0 \in X,\\ y(t) &= Cx(t), \qquad t\geq 0.
\end{align*}
We can ask if the map $w \mapsto CT(\,\cdot\,)w$,
initially defined on $D(A) = W_1$, extends to a bounded operator 
$\Psi_1\colon W \to \mathrm{Z}([0,1];Y)$. 
If $\Psi_1$ does extend to a map in $\LL(W,\mathrm{Z}([0,1];Y))$, we say that $C$ is a $\mathrm{Z}$-admissible observation operator for $T$ (or for
$A$, the generator of $T$). If it is clear from the context that a given operator $C$ is to 
be understood as an observation operator, then we say that $C$ is $\mathrm{Z}$-admissible.
Also, the operator $C$ is called zero-class $\mathrm{Z}$-admissible
if 
\[
    \sup_{w\in \ball_W} \norm{(\Psi_1w)|_{[0,\epsilon]}}_{\mathrm{Z}([0,\epsilon];Y)}
    \to 0\text{ as }\epsilon\searrow 0.
\]
In the sequel, the minimal assumption that we make is that $C$ is $\Ll^1$-admissible for $T$, ensuring that the image $\Psi_1(\ball_W)$ is a subset of $\Ll^1([0,1];Y)$. Given
suitable Banach spaces $W$, the results of Section
\ref{sec:operatorsintoL1} show that any $\Ll^1$-admissible $C$ is even $\Ee_F$-admissible 
for some Young function $F\in\Delta_2$. More precisely, we have the following theorem:
\begin{theorem}\label{thm:OrliczforCs}
    Let $W$ and $Y$ be Banach spaces and let $T$ be a \czero-semigroup on $X$. Assume that $C\in \LL(W_1,Y)$ is a $\Ll^1$-admissible observation operator for $T$.
    In any of the following cases, there exists a Young function $F \in \Delta_2$ such that
    $C$ is also $\Ee_F$-admissible for $T$:
    \begin{enumerate}[label=\alph*\textup{)}]
        \item $W$ is reflexive;\label{item:Wreflexive}
        \item $W$ has nontrivial (Rademacher) type;\label{item:Nontrivialtype}
        \item $W$ does not contain a complemented subspace isomorphic to $\ell^1$;\label{item:Nocomplementedell1}
        \item $\Psi_1$ is a weakly compact operator;\label{item:WeakCompactPsi}
        \item $\Psi_1(\ball_W)$ is a uniformly integrable subset of $\Ll^1([0,1];Y)$.\label{item:UIPsiBallW}
    \end{enumerate} 
    In particular, in this case, $C$ is zero-class $\Ll^{1}$-admissible.
\end{theorem}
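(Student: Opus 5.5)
The plan is to reduce every case to condition e), uniform integrability of $\Psi_1(\ball_W)$, and then apply Theorem \ref{thm:ui_C} with this $W$, this $Y$ and $\Psi=\Psi_1\in\LL(W,\Ll^1([0,1];Y))$. Here $\Psi_1$ is the bounded extension of $w\mapsto CT(\,\cdot\,)w$ provided by $\Ll^1$-admissibility. The implication c)$\Rightarrow$a) of Theorem \ref{thm:ui_C} then gives a Young function $F\in\Delta_2$ and a constant $C_F$ such that $\norm{\Psi_1 w}_{\Ll_F([0,1];Y)}\le C_F\norm{w}_W$ and $\Psi_1$ factors through $\Ee_F([0,1];Y)$. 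Read on $D(A)$ and then on all of $W$, this is precisely $\Ee_F$-admissibility of $C$.

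The reductions to e) are as follows.
\begin{itemize}
\item Case a): Proposition \ref{prop:reflexiveui} applies directly.
\item Case b): Corollary \ref{cor:typeforCs} applies directly.
\item Case c): Proposition \ref{prop:complementedforui} applies directly.
\item Case d): Weak compactness of $\Psi_1$ makes $\Psi_1(\ball_W)$ relatively weakly compact in $\Ll^1([0,1];Y)$. By the vector-valued Dunford--Pettis direction recalled in Section \ref{sec:ui}, it is therefore uniformly integrable. Alternatively, one invokes Corollary \ref{cor:reflexiveforC} directly.
\end{itemize}

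For the zero-class $\Ll^1$ claim, I use the Orlicz bound together with Hölder's inequality on $[0,\epsilon]$ against the constant function $y'$-free weight $\mathbf 1_{[0,\epsilon]}$. Concretely,
\[
\int_0^\epsilon \norm{(\Psi_1w)(t)}_Y\dd t\le 2\norm{\Psi_1 w}_{\Ll_F([0,1];Y)}\norm{\mathbf 1_{[0,\epsilon]}}_{\Ll_{\tilde F}([0,1])}\le 2C_F\norm{w}_W\,\norm{\mathbf 1_{[0,\epsilon]}}_{\Ll_{\tilde F}}.
\]
The Luxemburg norm of an indicator function is $1/\tilde F^{-1}(1/\epsilon)$, which tends to $0$ as $\epsilon\searrow0$ because $\tilde F$ is finite-valued.

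A simpler route to the same conclusion is available: uniform integrability alone gives $\sup_{w\in\ball_W}\int_{[0,\epsilon]}\norm{\Psi_1w}_Y\to0$ directly from the definition, taking $E_n=[0,\epsilon_n]$. I would present this simpler argument.

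The main point to be careful about is not analytic but bookkeeping. The restriction $(\Psi_1 w)|_{[0,\epsilon]}$ must be measured in $\Ll^1([0,\epsilon];Y)$, and the definition of $\Ee_F$-admissibility must refer to the extension $\Psi_1$ mapping into $\Ee_F$. The latter holds because $\Psi_1(D(A))$ consists of continuous functions, which lie in $\Ee_F$, and $\Ee_F$ is closed in $\Ll_F$.
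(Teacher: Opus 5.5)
Your proposal is correct and follows essentially the same route as the paper: every case is reduced to uniform integrability of $\Psi_1(\ball_W)$, via Proposition~\ref{prop:reflexiveui}, Corollary~\ref{cor:typeforCs}, Proposition~\ref{prop:complementedforui} and Dunford--Pettis, after which Theorem~\ref{thm:ui_C} is applied. The paper merely chains a)$\Rightarrow$d)$\Rightarrow$e) and b)$\Rightarrow$c)$\Rightarrow$e), whereas you cite the packaged statements directly; your derivation of zero-class admissibility from the definition of uniform integrability with $E_n=[0,\epsilon_n]$ is exactly the remark the paper makes right after the theorem.
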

\begin{proof}
    By Theorem \ref{thm:ui_C}, \ref{item:UIPsiBallW} is equivalent to existence of a Young function $F \in \Delta_2$
    such that $\Psi_1$ factors through $\Ee_F$. In turn, this is equivalent
    to the assertion that $C$ is $\Ee_F$-admissible. Next, 
    \ref{item:WeakCompactPsi} implies \ref{item:UIPsiBallW} due to the 
    Dunford--Pettis theorem. Item \ref{item:Wreflexive}, reflexivity of $W$, implies that any bounded operator
    defined on $W$ is weakly compact, such that \ref{item:WeakCompactPsi} holds. See also Proposition \ref{prop:reflexiveui} above. By Proposition \ref{prop:complementedforui},
    uniform
    integrability of $\Psi_1(\ball_W)$ follows if $W$ does not 
    contain $\ell^1$ complementedly, i.e., \ref{item:Nocomplementedell1} implies \ref{item:UIPsiBallW}. Lastly,
    if $W$ has nontrivial type, then $W$ cannot contain $\ell^1$, such that \ref{item:Nocomplementedell1} follows
    from \ref{item:Nontrivialtype}; see Corollary \ref{cor:typeforCs} above.
\end{proof}
\begin{remark}
Results on improving $\Ll^1$-admissibility of observation operators based on weak compactness properties
have previously appeared in \cite[Thm.\ 3.9, Rem.\ 3.10]{aroraLimitcaseAdmissibilityPositive2025a}. There, the authors show that \emph{zero-class} $\Ll^1$ admissibility follows when the operator $\Psi_1$ is weakly compact. However, the argument
there uses a version of the Dunford--Pettis theorem that
requires $Y$ is an AL-space.
\end{remark}
Uniform integrability of $\Psi_1(\ball_W)$
directly yields zero-class $\Ll^1$-admissibility of the associated $C$ operator.
Conversely, even when $Y=\C$, a $\Ll^1$-admissible
operator need not be zero-class $\Ll^1$-admissible:
\begin{example}\label{ex:nonzeroclassC}
Consider the Banach space $W = \ell^1$ and the operator $A = \operatorname{diag}(-n)$ with domain
$D(A) = \{ w \in W :  \sum_{n=1}^\infty \abs{nw_n} < \infty \}$. Then $A$
generates a \czero{}-semigroup $T$ on $W$ given by
$(T(t)w)_n = \e^{-nt}w_n$ for $n\in \N$ and $w\in W$. See also \cite[Ex.\ 17.4.6]{hytonenAnalysisBanachSpaces2023} where this semigroup is discussed in the context of maximal $\Ll^1$- and $\Ll^\infty$-regularity.

We define the observation operator $C \in \LL(D(A), \C)$ by
$Cw = \sum_{n=1}^\infty nw_n$ for $w \in D(A)$.
For $w \in D(A)$, set $(\Psi_1 w)(t) \coloneq CT(t)w = \sum_{n=1}^\infty n\e^{-nt}w_n$, $t\in [0,1]$.
For an arbitrary $\epsilon > 0$, we denote the truncation of $\Psi_1 w$ to $[0,\epsilon]$
by $\Psi_1^{(\epsilon)}$. We get:
\begin{align*}
    \norm{\Psi_1^{(\epsilon)} w}_{\Ll^1([0,\epsilon])} &= 
    \int_0^\epsilon \Biggl|\sum_{n=1}^\infty n\e^{-nt}w_n\Biggr|\dd t \leq \int_0^\epsilon \sum_{n=1}^\infty n\e^{-nt} \abs{w_n} \dd t\\
    &= \sum_{n=1}^\infty \abs{w_n} \int_0^\epsilon n\e^{-nt} \dd t = \sum_{n=1}^\infty \abs{w_n} ( 1 - \e^{-n\epsilon})
    \leq \norm{w}_{\ell^1}.
\end{align*}
It follows that $C$ is $\Ll^1$-admissible and $\Psi_1^{(\epsilon)}$ is even bounded on $W = \ell^1$ with $\norm{\Psi_1^{(\epsilon)}}_{\LL(\ell^1,\Ll^1([0,\epsilon]))}\leq 1$.

To show that $C$ is not zero-class admissible, we let
$e_k$ be the $k$th element of the standard basis of $\ell^1$;
we obtain $\norm{\Psi^{(\epsilon)}_1 e_k}_{\Ll^1([0,\epsilon])} = 
    \int_0^1 k\e^{-kt} \dd t = (1 - \e^{-k\epsilon})$,
which implies that $\norm{\Psi^{(\epsilon)}_1}_{\LL(\ell^1,\Ll^1([0,\epsilon]))} \geq (1 - \e^{-k\epsilon})$ for each $k \in \N$. Therefore, we must have
\[
    1 = \lim_{k\to \infty} (1 - \e^{-k\epsilon}) \leq \norm{\Psi_1^{(\epsilon)}}_{\LL(\ell^1,\Ll^1([0,\epsilon]))}\leq 1
\]
which means that $\norm{\Psi^{(\epsilon)}_1} = 1$ for every $\epsilon > 0$.
In particular, uniform integrability fails on the shrinking intervals $[0,\epsilon]$ as $\epsilon \to 0$. 
\end{example}
Another $\Psi_1$ operator that fails the zero-class property is the identity on $\Ll^1([0,1])$; this
operator can be constructed from a semigroup and an observation operator, see \cite[Ex.\ 26]{jacobContinuitySolutionsParabolic2019}, where the authors consider this example in the context of admissibility and Orlicz spaces.
\begin{remark}
    Question 25 in \cite{jacobContinuitySolutionsParabolic2019} on Orlicz-admissibility
    for $\Ll^1$-admissible observation operators, which the authors show to have a negative answer in general \cite[Ex.\ 26]{jacobContinuitySolutionsParabolic2019}, can be resolved positively in any of the cases presented in Theorem \ref{thm:OrliczforCs} above.
\end{remark}
\subsection{Input-to-state stability 
and integral input-to-state stability}
The Orlicz-admissibility result of Theorem~\ref{thm:mainwcompactness} above has implications in the theory of input-to-state stability (\ISS); a property first investigated by Sontag in \cite{sontagSmoothStabilization1989} for finite-dimensional nonlinear control systems. For an overview of the
theory of \ISS, we refer to the survey  \cite{mironchenkoInputtoStateStabilityInfiniteDimensional2020}.

Let $X$ be a Banach space and let $A$ generate a \czero-semigroup on $X$. For
a given input Banach space $U$ and an operator 
$B\in\LL(U,X_{-1})$, we consider the linear system
associated to the pair $(A,B)$, which is the abstract differential equation
\[
    \dot x(t) = Ax(t) + Bu(t),\qquad t\geq 0.
\]
For a given initial value $x_0 \in X$, the mild solution is 
$x(t) = T(t)x_0 + \int_0^t T(t-s) Bu(s) \dd s$; for a general operator $B\in\LL(U,X_{-1})$,
the function
$x$ merely exists as a (continuous) function with values in the extrapolation space $X_{-1}$.
As remarked above, $\mathrm{Z}$-admissibility guarantees that solutions take values in $X$
for input functions $u \in \mathrm{Z}([0,1];U)$.

Input-to-state stability of a control system with respect to some function space $\mathrm{Z} = 
\mathrm{Z}([0,\infty);U)$ 
is a notion that guarantees 
that the system is, in a sense, well-behaved for any input $u \in \mathrm{Z}$ by bounding the mild solution by functions of the initial value $x_0\in X$ and the input $u\in \mathrm{Z}$. 
To give a complete definition of input-to-state stability, we recall the definition of
the function classes $\mathcal{K}_\infty$ and $\mathcal{K}\mathcal{L}$ from 
\cite[537]{mironchenkoInputtoStateStabilityInfiniteDimensional2020}:
\begin{itemize}
    \item A function $f\colon [0,\infty) \to [0,\infty)$ lies in $\mathcal{K}_\infty$
    if: $f$ is continuous, zero only at zero, strictly increasing and unbounded;
    \item A function $g\colon [0,\infty) \times [0,\infty) \to [0,\infty)$
    lies in $\mathcal{K}\mathcal{L}$ if: $g$ is continuous and for any $t\geq 0$, $g(\,\cdot\,, t)$ is zero only at zero and strictly 
        increasing, and for any $s > 0$, $g(s,\,\cdot\,)$ is strictly decreasing and 
        $g(s,t) \to 0$ as $t \to \infty$.
\end{itemize}
For \ISS, we cite 
the following definition:
\begin{definition}[{\cite[p.~873, Definition~2.7]{Jacob_2018}}]
    Let $X$ and $U$ be Banach spaces and let $A$ geneate a \czero-semigroup
    on $X$. Consider an operator $B\in\LL(U,X_{-1})$.
    \begin{itemize}
    \item If there are 
    functions $\beta \in \mathcal{KL}$ and $\mu \in \mathcal{K}_\infty$ such that the mild solution $x$ of the system $(A,B)$ is $X$-valued and satisfies the inequality
    \[
        \norm{x(t)}_X \leq \beta(\norm{x_0}_X,t)+\mu(\norm{u}_{\mathrm{Z}(0,t;U)}),\qquad t\geq0,
    \]
    for any input function $u \in \mathrm{Z}(0,\infty;U)$ and any initial 
    value $x_0 \in X$, then $\Sigma(A,B)$ is
    called $\mathrm{Z}$-input-to-state stable; or, in brief, $\mathrm{Z}$-\ISS.

    \item 
    The system $(A,B)$ is called integral input-to-state stable with respect to $\mathrm{Z}$ or $\mathrm{Z}$-\iISS{} if the 
    mild solution $x$ takes values in $X$ and there are functions $\beta \in \mathcal{KL}$, $\mu \in \mathcal{K}$ and $\theta\in\mathcal K_\infty$
    such that for each $u \in \mathrm{Z}(0,\infty;U)$
    \[
        \norm{x(t)}_X \leq \beta(\norm{x_0}_X, t) + \theta\biggl( \int_0^t \mu(\norm{u(s)}_U)\dd s \biggr)
    \]
    holds for all $t \geq 0$.
    \end{itemize}
\end{definition}
It is a simple fact that  $\mathrm{Z}$-\iISS{} always implies $\mathrm{Z}$-\ISS, \cite[Prop.\ 2.10]{Jacob_2018} and that the latter property is equivalent to $\mathrm Z$-admissibility of $B$ and exponential stability of the semigroup. 
For finite-dimensional linear systems, it is well-known that \ISS{} and \iISS{}
are equivalent; see, e.g., \cite{sontagCommentsIntegralVariants1998}. In the 
case of $\Cc$-\ISS{} and $\Cc$-\iISS{}, the two properties are not equivalent. 
This follows from the Kato example in \cite[Ex.\ 2.3, Rem. 2.4]{jacobRefinementBaillonTheorem2022}
of an exponentially stable semigroup on $X = c_0$ together with an operator $B$
that is $\Cc$-admissible but not zero-class $\Cc$-admissible. The $\Cc$-\iISS{} property
fails due to \cite[Thm.\ 3.1]{Jacob_2018} since $(A,B)$ is not zero-class $\Cc$-admissible. 

The paper \cite{Jacob_2018} gives conditions for \iISS{} based on Orlicz-admissibility 
for control systems on infinite-dimensional state spaces. There 
it is shown that, under the assumption of exponential stability of the
associated semigroup, admissibility implies \ISS{}; and that $\Ee_F$-\ISS{} 
for some Young function $F$ implies $\Ll^\infty$-\iISS{}. We also refer to \cite{arora2026inputtostatestabilityintegralnorms} for extensions of
the \ISS{} concept to other norms.

With our previous
results,
for certain classes of Banach spaces and control systems, we 
resolve the question if $\Ll^\infty$-\ISS{} implies $\Ll^\infty$-\iISS{}, which is open in general; see, e.g. \cite{Jacob_2018}.
\begin{theorem}\label{thm:ISS}
    Let $A$ generate a \czero{}-semigroup $T$ on a Banach space $X$. Let $U$
    be a further Banach space and $B \in \LL(U,X_{-1})$ be a control operator.  
    If $(A,B)$ is $\Cc$-\ISS{} and the corresponding input operator $\Phi_1\colon\Cc([0,1];U)\to X$ is weakly compact, then the system is $\Ll^\infty$-\iISS{}.
\end{theorem}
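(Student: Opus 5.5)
The plan is to reduce the statement to Theorem \ref{thm:mainwcompactness} and then invoke the Orlicz-space criterion for integral input-to-state stability from \cite{Jacob_2018}.

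First, we extract the two ingredients contained in $\Cc$-\ISS{}. As recalled above, $\mathrm{Z}$-\ISS{} is equivalent to $\mathrm{Z}$-admissibility of $B$ together with exponential stability of $T$. Hence $(A,B)$ being $\Cc$-\ISS{} yields:
\begin{itemize}
    \item $B$ is $\Cc$-admissible, so $\Phi_1\colon\Cc([0,1];U)\to X$ is bounded;
    \item $T$ is exponentially stable.
\end{itemize}
If one wants to avoid quoting this equivalence, exponential stability can be read off directly. Take $u=0$ in the \ISS{} estimate to get $\norm{T(t)x_0}\le\beta(\norm{x_0},t)$. Choose $t_0$ with $\beta(1,t_0)<1$; then $\norm{T(t_0)}<1$, and exponential stability follows from the semigroup property.

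Second, since $\Phi_1$ is assumed weakly compact, Theorem \ref{thm:mainwcompactness} applies directly. It gives a Young function $F$ such that $B$ is $\Ee_F$-admissible, and moreover zero-class $\Ll^\infty$-admissible.

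Third, we combine these. Exponential stability together with $\Ee_F$-admissibility gives $\Ee_F$-\ISS{}, by the characterization of $\mathrm{Z}$-\ISS{} recalled above; this uses that the equivalence ``admissible $+$ exponentially stable $\Leftrightarrow$ \ISS{}'' holds for $\mathrm{Z}=\Ee_F$ as well. We then invoke the result of \cite{Jacob_2018} quoted above, namely that $\Ee_F$-\ISS{} for some Young function $F$ implies $\Ll^\infty$-\iISS{}. This yields the claim.

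The main obstacle is this last step: checking that the hypotheses of the cited \cite{Jacob_2018} result match what we have. That result is formulated for $\Ee_F$ or Orlicz inputs on $[0,\infty)$, possibly with $\Ll_F$ rather than $\Ee_F$, whereas our admissibility is on $[0,1]$. Two points need care.
\begin{itemize}
    \item \emph{From $[0,1]$ to $[0,t]$ for all $t$.} Admissibility on $[0,1]$ transfers to $[0,t]$ uniformly in $t$ by exponential stability. Split $[0,t]$ into unit intervals and sum the resulting geometric series $\sum_k \norm{T(k)}$, using that the Luxemburg norm of a restriction is dominated by the full norm.
    \item \emph{$\Ee_F$ versus $\Ll_F$.} If the cited result needs $\Ll_F$, we observe that $\Ll^\infty\subset\Ee_F$ on bounded intervals, and that \iISS{} only concerns $\Ll^\infty$ inputs, so $\Ee_F$-admissibility is enough.
\end{itemize}
Should the cited implication require zero-class admissibility instead, Theorem \ref{thm:mainwcompactness} supplies that too. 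In that case we would alternatively verify the characterization of $\Ll^\infty$-\iISS{} in \cite[Thm.\ 3.1]{Jacob_2018} through zero-class admissibility and exponential stability.
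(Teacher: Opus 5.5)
Your proposal is correct and follows the paper's proof step by step. The paper uses \cite[Prop.~2.10]{Jacob_2018} to extract $\Cc$-admissibility and exponential stability, Theorem~\ref{thm:mainwcompactness} to upgrade to $\Ee_F$-admissibility, \cite[Prop.~2.10]{Jacob_2018} again to obtain $\Ee_F$-\ISS{}, and \cite[Thm.~3.1]{Jacob_2018} to conclude $\Ll^\infty$-\iISS{}. Your extra checks (the direct proof of exponential stability, the passage from $[0,1]$ to $[0,t]$, and $\Ee_F$ versus $\Ll_F$) are harmless but unnecessary, since the cited results are stated for $\Ee_F$. The closing fallback via zero-class admissibility should be dropped, however: in \cite[Thm.~3.1]{Jacob_2018} zero-class admissibility appears as a consequence of $\Ll^\infty$-\iISS{}, not as a sufficient condition for it.
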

\begin{proof}
    Since $(A,B)$ is $\Cc$-\ISS{}, by \cite[Prop.~2.10]{Jacob_2018}, $B$ is also $\Cc$-admissible and the semigroup generated by $A$ is exponentially stable.
    Using weak compactness of $\Phi_1$, by Theorem~\ref{thm:mainwcompactness}, there exists a Young function $F$ such that 
    $B$ is also $\Ee_F$-admissible for $A$. As $T$ is exponentially stable, \cite[Prop.~2.10]{Jacob_2018} implies that the system is $\Ee_F$-\ISS{}. Using \cite[Thm.~3.1]{Jacob_2018}, it follows that the system is even $\Ll^\infty$-\iISS{}.
\end{proof}
We get the following from Theorems \ref{thm:ISS}, \ref{thm:nikishinforB}, \ref{cor:pelczynski_for_B} and \ref{thm:grothendieck}.
\begin{corollary}\label{cor:iISSandISS}
    Let $X$ be a Banach space, $A$ generate a \czero-semigroup on 
    $X$ and $B$ be a control operator $U \to X_{-1}$ for another Banach space $U$. If either
    \begin{itemize}
        \item $X$ is a reflexive Banach space or of nontrivial type, or
        \item $X$ does not contain $c_{0}$ and $U$ is finite-dimensional,
    \end{itemize}
    then the following assertions are equivalent 
    \begin{enumerate}[label=\alph*\textup{)}]
        \item $(A,B)$ is $\Cc$-\ISS{};
        \item $(A,B)$ is  $\Ll^\infty$-\ISS{};
        \item $(A,B)$ is  $\Cc$-\iISS{};
        \item $(A,B)$ is  $\Ll^\infty$-\iISS{}. 
    \end{enumerate}
    Furthermore, assertions b)--d) are also equivalent if $X$ is \WCG{} and $U$ is finite-dimensional.
\end{corollary}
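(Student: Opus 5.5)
The proof organizes the four assertions into a cycle of implications. Two of them hold without any assumption on $X$ or $U$. First, d) implies b) and c) implies a), because $\mathrm{Z}$-\iISS{} always implies $\mathrm{Z}$-\ISS{} by \cite[Prop.~2.10]{Jacob_2018}. Second, b) implies a) and d) implies c) by restricting to continuous inputs. Here $\Cc(0,t;U)\subset\Ll^\infty(0,t;U)$ isometrically, so the defining estimates transfer directly; for the integral version the right-hand side is literally the same expression. It therefore suffices to show a) $\Rightarrow$ d) under the stated hypotheses. This closes the cycle: a) $\Rightarrow$ d) $\Rightarrow$ b) $\Rightarrow$ a) and d) $\Rightarrow$ c) $\Rightarrow$ a).

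For a) $\Rightarrow$ d) we use Theorem~\ref{thm:ISS}, which reduces the task to showing that $\Phi_1\colon\Cc([0,1];U)\to X$ is weakly compact, or more directly that $B$ is $\Ee_F$-admissible. By \cite[Prop.~2.10]{Jacob_2018}, a) gives $\Cc$-admissibility of $B$ and exponential stability of $T$. We then treat the hypotheses case by case.
\begin{itemize}
\item If $X$ is reflexive, every bounded operator into $X$ is weakly compact (this is Corollary~\ref{cor:reflexiveforB}).
\item If $X$ has nontrivial type, Theorem~\ref{thm:nikishinforB} gives $\Ee_F$-admissibility directly. Then, as in the proof of Theorem~\ref{thm:ISS}, exponential stability yields $\Ee_F$-\ISS{}, and \cite[Thm.~3.1]{Jacob_2018} yields $\Ll^\infty$-\iISS{}.
\item If $X\not\supset c_0$ and $\dim U<\infty$, then $\Cc([0,1];U)\simeq\Cc([0,1])^n$ has property (V) by Pełczyński's theorem on $\Cc(K)$, since finite direct sums preserve (V). Alternatively, $U$ is reflexive and the Batt result applies. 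Either way, Corollary~\ref{cor:pelczynski_for_B} applies.
\end{itemize}

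For the final statement ($X$ \WCG{}, $\dim U<\infty$) we show b) $\Rightarrow$ d). By \cite[Prop.~2.10]{Jacob_2018}, b) gives $\Ll^\infty$-admissibility and exponential stability. Since $\Ll^\infty([0,1];U)\simeq\Ll^\infty([0,1])^n$ is Grothendieck (finite sums of Grothendieck spaces are Grothendieck), Theorem~\ref{thm:grothendieck} yields $\Ee_F$-admissibility, and the same chain through \cite{Jacob_2018} gives d). Together with d) $\Rightarrow$ b) and d) $\Rightarrow$ c), this shows that d) implies b) and c), and b) implies d). One more step is needed to close the cycle, namely c) $\Rightarrow$ b) or c) $\Rightarrow$ d). For this we plan to use that $\Cc$-\iISS{} gives zero-class $\Cc$-admissibility via \cite[Thm.~3.1]{Jacob_2018}, and then deduce $\Ll^\infty$-admissibility. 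We expect this last point to be the main obstacle, since the \WCG{} hypothesis alone does not obviously upgrade $\Cc$- to $\Ll^\infty$-admissibility. We would try to show that zero-class $\Cc$-admissibility implies $\Ll^\infty$-admissibility, through regulated-function admissibility \cite[Prop.~2.1]{aroraAdmissibleOperatorsSundual2025} and the continuity results of \cite{Jacob_2018}.
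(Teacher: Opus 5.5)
Your argument for the first part (under either bullet) is correct and follows the paper's route. The implications d)$\Rightarrow$b)$\Rightarrow$a) and d)$\Rightarrow$c)$\Rightarrow$a) are automatic. For a)$\Rightarrow$d) you use \cite[Prop.~2.10]{Jacob_2018}, obtain $\Ee_F$-admissibility from Theorem~\ref{thm:ISS}, Theorem~\ref{thm:nikishinforB} or Corollary~\ref{cor:pelczynski_for_B}, and finish with \cite[Thm.~3.1]{Jacob_2018}. You are right to bypass weak compactness of $\Phi_1$ in the nontrivial-type case. Your proof of b)$\Rightarrow$d) in the \WCG{} case via Theorem~\ref{thm:grothendieck} is also fine.

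The gap is c)$\Rightarrow$b) or c)$\Rightarrow$d) in the final statement. You leave it open, so the equivalence of b)--d) is not proved. Your planned route is also not supported as stated. Reducing c) to zero-class $\Cc$-admissibility throws away the quantitative information you need. Nothing available upgrades zero-class $\Cc$-admissibility to $\Ll^\infty$-admissibility, since regulated admissibility is just equivalent to $\Cc$-admissibility and does not reach general $\Ll^\infty$ inputs.

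The missing idea is that the \iISS{} estimate is itself a modular bound, so it directly gives Orlicz-admissibility. Indeed, c) implies $\Cc$-\ISS{} and hence exponential stability. Take $x_0=0$, $t=1$ and $u\in\Cc([0,1];U)$, extended constantly to $[0,\infty)$. Then
\[
\norm{\Phi_1 u}_X\le\theta\biggl(\int_0^1\mu(\norm{u(s)}_U)\dd s\biggr).
\]
Choose a Young function $F$ and $r_0>0$ with $\mu\le F$ on $[r_0,\infty)$; this is possible because $\mu$ is increasing. If $\norm{u}_{\Ll_F([0,1];U)}\le 1$, then $\int_0^1 F(\norm{u(s)}_U)\dd s\le 1$, so the integral above is at most $\mu(r_0)+1$. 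By linearity of $\Phi_1$,
\[
\norm{\Phi_1 u}_X\le\theta(\mu(r_0)+1)\norm{u}_{\Ll_F([0,1];U)}
\]
for all continuous $u$. Density of $\Cc([0,1];U)$ in $\Ee_F([0,1];U)$ then gives $\Ee_F$-admissibility, and d) follows exactly as in the proof of Theorem~\ref{thm:ISS}.

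This is the easy half of the Orlicz characterization of \iISS{} from \cite{Jacob_2018}, which the paper's one-line justification leaves implicit. Note that this step uses neither the \WCG{} assumption nor $\dim U<\infty$; those hypotheses are needed only for b)$\Rightarrow$d).
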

This partly generalizes \cite[Cor.\ 21]{jacobContinuitySolutionsParabolic2019} as the coincidence of
\iISS{} and \ISS{} there is derived under the assuption that $X$ is
a Hilbert space, among other assumptions on $A$ and $U$. Moreover, the above result also generalizes \cite[Cor.\ 3.7]{jacob2026laplacecarlesonembeddingsinfinitynormadmissibility}, as a space $X$ with a $q$-Riesz basis must be separable and thus \WCG.
We remark that a proof or disproof of the general equivalence of $\Ll^\infty$-\ISS{} and $\Ll^\infty$-\iISS{} (cf.\ \cite{Jacob_2018}) in the cases not covered by Corollary \ref{cor:iISSandISS} above remains open.
\subsection{Perturbation theory}
The Desch--Schappacher and Miyadera--Voigt theorems, see, e.g., \cite[Chap.\ III]{engel_one-parameter_2000},
lead to the above admissibility results having consequences for generation of perturbed semigroups.
These results give conditions on semigroup generation for additively perturbed semigroup
generators. Some of the many recent related works are \cite{aroraLimitcaseAdmissibilityPositive2025a, barbieriPerturbationsPositiveSemigroups2025, barbieriOnStructuredPerturbations2025, batkaiPerturbationsPositiveSemigroups2018, wintermayrPositivityPerturbationTheory2019a} where the relation between positivity properties
and perturbations are studied. See also \cite{kunstmannPerturbationTheorems2001} for perturbation
results in the context of maximal regularity. We also remark that these perturbation results
are particularly useful for \emph{boundary perturbations}; that is, when the operator $B$
arises from a perturbation of the boundary conditions of a differential operator $A$. In this
case, the operator $B$ is indeed of the form $B = A_{-1}B_0$. See, e.g., \cite{greinerPerturbingtheBoundaryConditions1987} and \cite[Chap.\ 10]{tucsnak_observation_2009}.

We cite the Desch--Schappacher theorem in its version from \cite[Corollary III.3.3]{engel_one-parameter_2000}, see also \cite[Theorem III.3.1]{engel_one-parameter_2000}. The theorem 
states the following: Let $T$ be 
a \czero-semigroup on a Banach space $X$, generated by $A$. Assume $B \in \LL(X,X_{-1})$ is such that 
there are $t_0 > 0$ and $0 \leq \kappa(t_0) < 1$ with the property that 
\[
    \Norm{\int_0^{t_0}T(t_0 - s)Bu(s)\dd s}_X \leq \kappa(t_0) \norm{u}_{\Cc([0,t_0];X)}
\]
holds for each continuous $u\colon [0,t_0]\to X$. Then the operator
$(A_{-1} +B)_X$ with domain $D((A_{-1} +B)_X) = \{ x \in  X : A_{-1}x + Bx \in X \}$
generates a strongly continuous semigroup on $X$.

The requirements from above can be interpreted as asking $B$ to be 
admissible with respect to the continuous functions, and additionally that
the constant can be chosen to be less than one for some $t_0$. Hence the ``smallness condition'' $\kappa(t_0) < 1$ is always 
satisfied if $B$ is even zero-class $\Cc$-admissible.
\begin{theorem}
    Let $X$ be a Banach space that is reflexive or has nontrivial type. If
    $A$ generates a \czero-semigroup on $X$ and $B\in\LL(X,X_{-1})$ has the
    property that $\int_0^1 T(1-s)Bu(s) \dd s \in X$ for each $u\in \Cc([0,1];X)$,
    then $(A_{-1} + B)_X$ also generates a \czero-semigroup on $X$.
\end{theorem}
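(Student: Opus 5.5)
The plan is to reduce the statement to the Desch--Schappacher theorem in the form of \cite[Cor.\ III.3.3]{engel_one-parameter_2000} quoted above. For this it suffices to show that $B$ is zero-class $\Cc$-admissible, since then the smallness condition $\kappa(t_0)<1$ holds for all sufficiently small $t_0>0$.

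First, the hypothesis only says that $\Phi_1 u=\int_0^1 T(1-s)Bu(s)\dd s$ lies in $X$ for every $u\in\Cc([0,1];X)$. Boundedness is not assumed, so I would recover it by the closed graph theorem. The map $\Phi_1\colon\Cc([0,1];X)\to X_{-1}$ is clearly bounded, because $T_{-1}$ is a \czero-semigroup on $X_{-1}$ and $B\in\LL(X,X_{-1})$. Now let $u_n\to u$ in $\Cc$ and $\Phi_1u_n\to y$ in $X$. Then $\Phi_1 u_n\to\Phi_1 u$ in $X_{-1}$, and since $X\hookrightarrow X_{-1}$ continuously, $y=\Phi_1u$. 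Hence $\Phi_1\in\LL(\Cc([0,1];X),X)$, which means $B$ is $\Cc$-admissible with $U=X$.

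Next, I would invoke the main results. If $X$ is reflexive, Corollary \ref{cor:reflexiveforB} applies. If $X$ has nontrivial type, Theorem \ref{thm:nikishinforB} applies. In both cases $B$ is $\Ee_F$-admissible for some Young function $F$. By the last part of Theorem \ref{thm:mainwcompactness}, or directly by \cite[Proof of Prop.\ 5]{jacobContinuitySolutionsParabolic2019}, this yields zero-class $\Cc$-admissibility:
\[
\sup_{\norm{u}_{\Cc([0,\epsilon];X)}\le1}\Norm{\int_0^\epsilon T(\epsilon-s)Bu(s)\dd s}_X\to0 \quad\text{as } \epsilon\searrow0.
\]
For the nontrivial-type case this step needs a little care, since Theorem \ref{thm:nikishinforB} does not state zero-class admissibility explicitly. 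I would note the underlying estimate: the $\Ll_F([0,\epsilon];X)$-norm of a function bounded by $1$ is at most $1/F^{-1}(1/\epsilon)$, which tends to $0$. The $\Ee_F$-bound on $[0,1]$ transfers to $[0,\epsilon]$ by extending $u$ by zero and using the semigroup property, which gives the factor $T(1-\epsilon)$. Any discontinuity introduced by the zero extension is harmless in $\Ee_F$.

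Finally, I would choose $t_0>0$ with $\kappa(t_0)<1$ and apply the Desch--Schappacher theorem to conclude that $(A_{-1}+B)_X$ generates a \czero-semigroup. The main obstacle is the zero-class step, namely turning Orlicz-admissibility on $[0,1]$ into small constants on short intervals. The concern there is only that zero extensions and restrictions behave well in $\Ee_F$, which follows from density of simple functions. The closed-graph step is routine but has to be stated explicitly, since the theorem assumes only well-definedness of $\Phi_1$ and not its boundedness.
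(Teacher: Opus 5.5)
Your proposal is correct and follows the paper's proof step for step. You first obtain $\Cc$-admissibility, making explicit the closed-graph argument that the paper leaves implicit. You then get $\Ee_F$-admissibility from Corollary~\ref{cor:reflexiveforB} or Theorem~\ref{thm:nikishinforB}, zero-class $\Cc$-admissibility from \cite[Proof of Prop.\ 5]{jacobContinuitySolutionsParabolic2019}, and conclude with the Desch--Schappacher theorem.

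There is one slip in your optional self-contained sketch of the zero-class step. Extending $u$ by zero on $(\epsilon,1]$ gives $\int_0^1T(1-s)Bv(s)\dd s=T(1-\epsilon)\int_0^\epsilon T(\epsilon-s)Bu(s)\dd s$, and this does not control the quantity you need. Instead, place the shifted $u$ on $[1-\epsilon,1]$ and set $v=0$ on $[0,1-\epsilon)$. Then $\int_0^1T(1-s)Bv(s)\dd s=\int_0^\epsilon T(\epsilon-s)Bu(s)\dd s$ exactly, which gives the bound $C\norm{u}_{\Cc([0,\epsilon];X)}/F^{-1}(1/\epsilon)$ with no semigroup factor.
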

\begin{proof}
    The assumption on $B$ implies that $B$ is $\Cc$-admissible. Using Corollary \ref{cor:reflexiveforB}
    for reflexive $X$ or Theorem \ref{thm:nikishinforB} for $X$ with nontrivial 
    type, it follows that there is a Young function $F$ such that 
    $B$ is also $\Ee_F$-admissible for $A$. By \cite[Proof of Prop.\ 5]{jacobContinuitySolutionsParabolic2019}, $B$ is zero-class $\Cc$-admissible, i.e.,
    \[
        \Norm{ \int_0^{t} T(t - s) Bu(s) \dd s}_X \leq \kappa(t) \norm{u}_{\Cc([0,t];X)}
    \]
    holds for each $t > 0$ and any $u \in \Cc([0,t];X)$ and we additionally have $\kappa(t) \to 0$ for $t \searrow 0$.
    Due to \cite[Cor.\ III.3.3]{engel_one-parameter_2000}, this implies that $B$ is a valid Desch-Schappacher perturbation for $A$. Hence $(A_{-1} + B)_X$ generates
    a strongly continuous semigroup on $X$.
\end{proof}
The corresponding version for observation operators $C$ is formulated using the 
Miyadera--Voigt perturbation theorem, which can be found in, e.g., \cite[Theorem III.3.14, Corollary III.3.16]{engel_one-parameter_2000}. The perturbation theorem
states that if $A$ generates a strongly continuous semigroup on $X$ and $C \in\LL(X_1, X)$ is $\Ll^1$-admissible with admissibility constant strictly
less than one, then the operator $A + C$ with domain $D(A + C) = D(A) = X_1$ is itself the generator of a strongly continuous semigroup on $X$. Hence:
\begin{corollary}
    If $X$ is reflexive or has nontrivial type, then any $\Ll^1$-admissible
    observation operator $C \in \LL(X_1,X)$ is a Miyadera--Voigt perturbation, such
    that $A + C$ is the generator of a strongly continuous semigroup on $X$.
\end{corollary}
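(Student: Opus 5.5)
The plan is to go through the Orlicz improvement for observation operators from Theorem~\ref{thm:OrliczforCs} and then feed the resulting zero-class property into the Miyadera--Voigt theorem, in direct analogy with the Desch--Schappacher result just proved. Throughout, the observation operator is $C\in\LL(X_1,X)$, so $W=Y=X$. Let $\Psi_t\colon x\mapsto CT(\,\cdot\,)x$ denote the extension to a map $X\to \Ll^1([0,t];X)$, which exists by $\Ll^1$-admissibility. By Miyadera--Voigt \cite[Cor.\ III.3.16]{engel_one-parameter_2000}, it suffices to find some $t_0>0$ with
\[
\int_0^{t_0}\norm{CT(s)x}_X\dd s\le q\norm{x}_X,\qquad x\in D(A),
\]
for some $q<1$. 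This will follow from the stronger statement that $\norm{\Psi_t}_{\LL(X,\Ll^1([0,t];X))}\to0$ as $t\searrow0$, i.e., that $C$ is zero-class $\Ll^1$-admissible.

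\textbf{Step 1.} If $X$ is reflexive, case a) of Theorem~\ref{thm:OrliczforCs} applies. If $X$ has nontrivial type, case b) applies. Either way, $\Psi_1(\ball_X)$ is uniformly integrable in $\Ll^1([0,1];X)$; alternatively, one can cite Proposition~\ref{prop:reflexiveui} and Corollary~\ref{cor:typeforCs} directly. The theorem also yields a Young function $F\in\Delta_2$ such that $C$ is $\Ee_F$-admissible. We only need the uniform integrability, or equivalently the final statement of that theorem, namely zero-class $\Ll^1$-admissibility.

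\textbf{Step 2.} Deduce the zero-class property from uniform integrability by applying the definition of uniform integrability to the sets $E_n=[0,1/n]$. Since $\abs{E_n}\to0$, we get
\[
\sup_{x\in\ball_X}\int_0^{1/n}\norm{(\Psi_1x)(s)}_X\dd s\to0.
\]
Because $(\Psi_1 x)|_{[0,t]}=\Psi_t x$, and the quantity is monotone in $t$, this gives $\norm{\Psi_t}\to0$ as $t\searrow0$. We can therefore choose $t_0$ with $\norm{\Psi_{t_0}}\le 1/2$. Applying Miyadera--Voigt with $q=1/2$ shows that $A+C$ with domain $D(A)$ generates a \czero-semigroup, which proves the corollary.

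\textbf{Main obstacle.} The only potentially delicate point is checking that the hypothesis format of Miyadera--Voigt in \cite{engel_one-parameter_2000} matches. That theorem is stated for $x\in D(A)$, with the bound $\int_0^{t_0}\norm{CT(s)x}\dd s\le q\norm{x}$. This is exactly the restriction of $\Psi_{t_0}$ to the dense set $D(A)$, where $\Psi_{t_0}x$ is given pointwise by $CT(\,\cdot\,)x$, so no extension subtleties arise. Everything else is a direct citation of the results established earlier in the paper.
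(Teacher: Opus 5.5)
Your proposal is correct and follows the route the paper intends. Cases a) and b) of Theorem~\ref{thm:OrliczforCs} give uniform integrability, hence zero-class $\Ll^1$-admissibility, so the admissibility constant drops below one for small $t_0$ and the Miyadera--Voigt theorem \cite[Cor.\ III.3.16]{engel_one-parameter_2000} applies; your derivation via $E_n=[0,1/n]$ simply spells out the ``in particular'' clause of that theorem, which the paper leaves implicit.
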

\subsection{Continuity of mild solutions}
For linear systems with admissible operators a natural question
is that of continuity of the mild solutions. Let $X$ be a Banach space,
$T$ be a \czero-semigroup on $X$, $U$ a second Banach space and $B\in \LL(U,X_{-1})$
be an operator. 

Suppose $B$ is $\Ll^p$-admissible for some $1 \leq p < \infty$. By a well-known 
result of Weiss \cite[Prop.\ 2.3, Thm.\ 3.9]{weiss_admissibility_1989} (see also \cite[Rem.\ 2.6]{Jacob_2018}) 
the mild solutions of the system
\[
\begin{split}
    \dot x(t) &= Ax(t) + Bu(t),\qquad t\geq 0,\\ 
    x(0) &= x_0 \in X 
\end{split}\tag{$\Sigma$}\label{eq:sys}
\]
are functions $x\in \Cc([0,t_0];X)$ for any given $u \in \Ll^p([0,\infty);U)$
and any initial datum $x_0 \in X$. In \cite[Prob.\ 2.4]{weiss_admissibility_1989}
Weiss asks if this property extends to $\Ll^\infty$-admissible $B$.

Building on Section \ref{sec:B_operators}, 
we answer Weiss' question affirmatively for a range of special cases;
namely those where Orlicz-admissibility follows automatically
from $\Ll^\infty$- or $\Cc$-admissibility.
Due to Prop.\ 2.1.9 in \cite{hosfeldInputtostateStabilityClasses2025},
the mild solution of any $\Ee_F$-admissible system with 
is continuous for any input $u\in \Ee_F([0,\infty);U)$. Hence we have:
\begin{corollary}\label{cor:linftymildsol}
    Let $B$ be $\Ll^\infty$-admissible. If 
    \begin{itemize}
        \item $X$ is reflexive; or
        \item $X$ has nontrivial type; or
        \item $X\not\supset c_0$ and $\Cc([0,1];U)$ has property (V); or
        \item $X$ is \WCG{} and $\Ll^\infty([0,1];U)$ is a Grothendieck space;
    \end{itemize}
    then for any $u \in \Ll^\infty([0,\infty);U)$ the corresponding
    mild solution of \eqref{eq:sys} is a continuous function taking
    values in $X$.
\end{corollary}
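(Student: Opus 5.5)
The plan is to reduce the claim to Orlicz-admissibility and then invoke the continuity result for $\Ee_F$-admissible systems. First, $\Ll^\infty$-admissibility of $B$ implies $\Cc$-admissibility, since $\Cc([0,1];U)\subset\Ll^\infty([0,1];U)$ isometrically. Next I obtain a Young function $F$ such that $B$ is $\Ee_F$-admissible, treating each bullet separately.
\begin{enumerate}
\item If $X$ is reflexive, this is Corollary~\ref{cor:reflexiveforB}.
\item If $X$ has nontrivial type, it is Theorem~\ref{thm:nikishinforB}.
\item If $X\not\supset c_0$ and $\Cc([0,1];U)$ has property (V), it is Corollary~\ref{cor:pelczynski_for_B}.
\item If $X$ is \WCG{} and $\Ll^\infty([0,1];U)$ is Grothendieck, it is Theorem~\ref{thm:grothendieck}. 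This is the one case that genuinely uses $\Ll^\infty$-admissibility rather than only $\Cc$-admissibility: $\Phi_1$ must be defined on the Grothendieck space $\Ll^\infty([0,1];U)$ for its weak compactness to follow.
\end{enumerate}

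Second, I use that $\Ee_F$-admissibility on $[0,1]$ gives $\Ee_F$-admissibility on every interval $[0,t_0]$. This follows by the standard semigroup/concatenation argument: split $[0,t_0]$ into unit-length pieces and use $T(t)\in\LL(X)$. Then Prop.~2.1.9 in \cite{hosfeldInputtostateStabilityClasses2025} yields that the mild solution
\[
x(t)=T(t)x_0+\int_0^tT(t-s)Bu(s)\dd s
\]
is continuous with values in $X$ for every $u\in\Ee_F([0,\infty);U)$ and every $x_0\in X$.

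Third, it remains to handle $u\in\Ll^\infty([0,\infty);U)$. The restriction of such a $u$ to each $[0,t_0]$ lies in $\Ee_F([0,t_0];U)$, because bounded measurable functions on a finite interval belong to $\Ee_F$: a bounded strongly measurable function is a uniform limit a.e.\ of simple functions, and on a bounded interval uniform convergence implies Luxemburg-norm convergence. Continuity on every $[0,t_0]$ then gives continuity on $[0,\infty)$.

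The main subtlety I expect is this last membership $\Ll^\infty\subset\Ee_F$ together with the fact that the solution built via the $\Ee_F$-extension coincides with the one given by the $\Ll^\infty$-input map. Both operators are bounded into $X$ (or at least into $X_{-1}$) and agree on simple functions, which are dense in $\Ee_F$ and approximate $\Ll^\infty$ functions in the $\Ee_F$ norm. Since $\Ee_F$-convergence implies $\Ll^1$-convergence, the two definitions of the mild solution coincide in $X_{-1}$, and hence in $X$.
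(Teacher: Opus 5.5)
Your proposal follows the paper's own argument. In each of the four cases, Corollary~\ref{cor:reflexiveforB}, Theorem~\ref{thm:nikishinforB}, Corollary~\ref{cor:pelczynski_for_B} or Theorem~\ref{thm:grothendieck} gives $\Ee_F$-admissibility, and Prop.~2.1.9 of \cite{hosfeldInputtostateStabilityClasses2025} then gives continuity. Your localization to $[0,t_0]$ (needed because $\Ll^\infty([0,\infty);U)\not\subset\Ee_F([0,\infty);U)$) and your consistency check make explicit what the paper leaves implicit.

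One justification in your third step is false, however. For infinite-dimensional $U$, a bounded strongly measurable function need not be a uniform limit of simple functions, because uniform limits of finitely-valued functions have essentially relatively compact range. For example, $f=\sum_n e_n\chi_{[2^{-n},2^{-n+1})}$ with values in $\ell^2$ stays at uniform distance at least $\sqrt2/2$ from every finitely-valued function.

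The inclusion $\Ll^\infty([0,t_0];U)\subset\Ee_F([0,t_0];U)$ is nevertheless true. Take simple $f_n\to f$ a.e.\ with $\norm{f_n}_U\le 2\norm{f}_U$ pointwise; such $f_n$ are obtained by setting an arbitrary approximating sequence to zero where it exceeds $2\norm{f}_U$. Then, for every $k>0$, the integrands $F(\norm{f_n-f}_U/k)$ tend to $0$ a.e.\ and are dominated by the constant $F(3\norm{f}_{\Ll^\infty}/k)$. Dominated convergence gives $\int_0^{t_0}F(\norm{f_n-f}_U/k)\dd s\to0$, hence $\norm{f_n-f}_{\Ll_F([0,t_0];U)}\le k$ eventually.

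With this repair, and the trivial zero extension of $u|_{[0,t_0]}$ to apply Hosfeld's result, your argument is complete.
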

 \begin{corollary}
    Let $B_0 \in \LL(U,X)$.
     If $\lambda \in \rho(A)$ and $B = (\lambda\id_X - A)B_0$ is even $\Ee_F$-admissible for some Young function
     $F$, then $\Phi\colon u \mapsto A\int_0^{\,\cdot\,}T(\,\cdot\, - s) B_0 \dd s$ is a bounded
     operator $\Ee_F([0,1];U) \to \Cc([0,1];X)$.
 \end{corollary}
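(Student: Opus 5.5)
The plan is to reduce the claim to $\Ee_F$-admissibility of $B$ on every subinterval $[0,t]$, $t\in(0,1]$, with a uniform constant, and then to show that $\Phi u$ is a continuous $X$-valued function. We write $B = (\lambda\id_X - A)B_0 = \lambda B_0 - A_{-1}B_0$. For smooth $u$ this gives
\[
  \Phi u(t) = A\int_0^t T(t-s)B_0u(s)\dd s = \lambda\int_0^t T(t-s)B_0u(s)\dd s - \int_0^t T(t-s)Bu(s)\dd s ,
\]
where the second integral is first computed in $X_{-1}$ and then lands in $X$. The first term is a bounded operator from $\Ll^1([0,1];U)$, and hence from $\Ee_F([0,1];U)\hookrightarrow\Ll^1([0,1];U)$, into $\Cc([0,1];X)$. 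This follows from the standard continuity of convolutions of \czero-semigroups with Bochner-integrable functions, since $B_0\in\LL(U,X)$. Everything therefore reduces to the second term, $u\mapsto \int_0^{\,\cdot\,}T(\,\cdot\,-s)Bu(s)\dd s$.

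For the second term, $\Ee_F$-admissibility on $[0,1]$ gives admissibility on every $[0,t]$ with $t\le 1$ and the same constant. To see this, extend $u\in\Ee_F([0,t];U)$ by zero on $[t,1]$ and shift it to $[1-t,1]$. The zero extension does not change the Luxemburg norm, and the shift is an isometry. Hence $\norm{\int_0^t T(t-s)Bu(s)\dd s}_X\le C\norm{u}_{\Ll_F([0,1];U)}$ uniformly in $t\in[0,1]$. This shows pointwise that $\Phi u(t)\in X$ with $\sup_t\norm{\Phi u(t)}_X\lesssim \norm{u}_{\Ll_F}$.

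The main obstacle is continuity in $t$. The most direct route is to cite Prop.\ 2.1.9 in \cite{hosfeldInputtostateStabilityClasses2025}, already used before Corollary \ref{cor:linftymildsol}: for $\Ee_F$-admissible $B$, mild solutions with $x_0=0$ and inputs in $\Ee_F$ are continuous in $X$. If a self-contained argument is preferred, use density instead. For $u\in\Cc^1([0,1];U)$, an integration by parts shows that $\int_0^{\,\cdot\,}T(\,\cdot\,-s)Bu(s)\dd s$ is continuous into $X$; alternatively, note that for $u$ constant the map equals $\int_0^{t}T(r)B\,\dd r$, which is continuous by admissibility and the semigroup property. Such functions are dense in $\Ee_F([0,1];U)$ by \cite[Lemma~1.2.28~(iii)]{hosfeldInputtostateStabilityClasses2025} together with the closure of simple functions. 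Since the map is uniformly bounded in the sup norm by the previous paragraph, it is a uniform limit of continuous functions, and so continuous.

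Combining the two terms, $\Phi$ extends from smooth functions to a bounded operator $\Ee_F([0,1];U)\to\Cc([0,1];X)$ with norm at most $\abs{\lambda}\,\norm{B_0}\sup_{t\le1}\norm{T(t)} + C$. The extension agrees with the defining formula on smooth, hence dense, inputs.
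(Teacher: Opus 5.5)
Your proof is correct and follows essentially the paper's route. You use a bound uniform in $t\in[0,1]$ from $\Ee_F$-admissibility, which is the paper's step $\sup_{t}\kappa(t)\norm{u}_{\Ee_F([0,t];U)}\le\kappa(1)\norm{u}_{\Ee_F([0,1];U)}$, whose monotonicity your zero-extension-and-shift argument justifies, together with continuity of mild solutions from \cite[Prop.~2.1.9]{hosfeldInputtostateStabilityClasses2025}; you are in fact more careful than the paper, which tacitly identifies $A\int_0^t T(t-s)B_0u(s)\dd s$ with the admissibility integral of $B$ (effectively taking $\lambda=0$), whereas you split off the bounded term $\lambda\int_0^t T(t-s)B_0u(s)\dd s$ explicitly.
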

 \begin{proof}
 Let $u \in \Ee_F([0,1];U)$.
     Since $\Phi$ is the mild solution operator corresponding to  \eqref{eq:sys}, we only need to show the norm inequality 
     $\norm{\Phi u}_{\Cc([0,1];X)} \leq M \norm{u}_{\Ee_F([0,1];U)}$
     with some absolute constant $M \leq 0$. 
We have
\[
    \norm{\Phi u}_{\Cc([0,1];X)} =
    \sup_{0\leq t \leq 1}\Norm{A\int_0^t T(t-s)B_0 u(s) \dd s}_X\leq \sup_{0\leq t \leq 1} \kappa(t)\norm{u}_{\Ee_F([0,t];U)}
\]
by admissibility. It follows that $\norm{\Phi u}_{\Cc([0,1];X)} \leq \kappa(1) \norm{u}_{\Ee_F([0,1];U)}$, ending the proof.
    \end{proof}
    Under any of the assumptions of Corollary \ref{cor:linftymildsol}, we obtain continuity
    of the mild solutions for any $\Ll^\infty$-admissible $B$, partially answering Weiss' question \cite[Prob.\ 2.4]{weiss_admissibility_1989} in the positive.
\subsection{Consequences of positivity assumptions}
If additional assumptions in regards to positivity are made, then there are classes
of observation and control operators that are automatically $\Ll^1$- or $\Cc$-admissible, see, e.g., \cite[Sects.\ 3, 4]{aroraLimitcaseAdmissibilityPositive2025a}
and \cite[Sect.\ 4]{wintermayrPositivityPerturbationTheory2019a}. See also  \cite{barbieriPerturbationsPositiveSemigroups2025} and \cite{batkaiPerturbationsPositiveSemigroups2018}. 
In particular, we 
work with the assumptions made in \cite[Assumptions 3.1, 4.1]{aroraLimitcaseAdmissibilityPositive2025a}, and we refer to \cite{aroraLimitcaseAdmissibilityPositive2025a} for the terminology used.
The automatic admissibility results from \cite[Prop.\ 3.2, Thm.\ 4.9]{aroraLimitcaseAdmissibilityPositive2025a}
have the following consequences:
\begin{corollary}
    Suppose $X$, $Y$ and $U$ are ordered Banach spaces, and assume that
    $X$ has a generating and normal cone. Let $T$ be a \czero-semigroup of positive operators on $X$.

    Under the following assumptions, every positive $C \in \LL(X_{1},Y)$ is 
    $\Ee_F$-admissible for some Young function $F$:
    \begin{itemize}
        \item the norm of $Y$ is additive on $Y_+$ (e.g., $Y$ is an AL-space); 
        \item $X$ does not contain a complemented copy of $\ell^1$ (e.g.,
        if $X$ is reflexive or has nontrivial type).
    \end{itemize}

    If the following hold, then every positive $B\in\LL(U,X_{-1})$ is 
    $\Ee_F$-admissible for some Young function $F$:
    \begin{itemize}
        \item the open unit ball of $U$ is upward directed (e.g., $U$ is an AM-space);
        \item $X$ is reflexive or has nontrivial type or $\Cc([0,1];U)$
        has Pełczyński's property (V).
    \end{itemize}
\end{corollary}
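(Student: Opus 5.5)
The plan is to reduce both halves of the corollary to earlier results of the paper. The cited automatic admissibility results from \cite{aroraLimitcaseAdmissibilityPositive2025a} supply the limit-case admissibility. Our abstract improvement theorems then upgrade it to Orlicz-admissibility.

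\emph{Observation operators.} Under the standing assumptions ($X$ ordered with generating and normal cone, $T$ positive), \cite[Prop.\ 3.2]{aroraLimitcaseAdmissibilityPositive2025a} shows that every positive $C\in\LL(X_1,Y)$ is $\Ll^1$-admissible when the norm of $Y$ is additive on $Y_+$. Hence $\Psi_1\colon X\to \Ll^1([0,1];Y)$ is bounded. We apply Theorem \ref{thm:OrliczforCs} with $W=X$. Case c) there asks only that $X$ contain no complemented copy of $\ell^1$, which is our assumption; it yields a Young function $F\in\Delta_2$ such that $C$ is $\Ee_F$-admissible. The parenthetical examples are also covered. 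A reflexive $X$ has no subspace isomorphic to $\ell^1$, since closed subspaces of reflexive spaces are reflexive. A space of nontrivial type has none either, by the argument in Corollary \ref{cor:typeforCs}.

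\emph{Control operators.} By \cite[Thm.\ 4.9]{aroraLimitcaseAdmissibilityPositive2025a}, if the open unit ball of $U$ is upward directed, every positive $B\in\LL(U,X_{-1})$ is $\Cc$-admissible. We then split into three cases.
\begin{itemize}
\item If $X$ is reflexive, Corollary \ref{cor:reflexiveforB} gives $\Ee_F$-admissibility.
\item If $X$ has nontrivial type, Theorem \ref{thm:nikishinforB} gives it.
\item If $\Cc([0,1];U)$ has property (V), we use Corollary \ref{cor:pelczynski_for_B}, which additionally requires $X\not\supset c_0$.
\end{itemize}

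The third case is the main obstacle, because the statement lists property (V) of $\Cc([0,1];U)$ without an explicit hypothesis $X\not\supset c_0$. I would first try to extract $X\not\supset c_0$ from the order assumptions or from the setting of \cite{aroraLimitcaseAdmissibilityPositive2025a}. If that fails, I would read the hypothesis as implicitly including $X\not\supset c_0$, as in Corollary \ref{cor:pelczynski_for_B}, and state this explicitly. It cannot simply be dropped: by Pełczyński's result, an operator from a (V)-space into a space containing $c_0$ need not be weakly compact, so Theorem \ref{thm:mainwcompactness} would not apply.

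The remaining care is bookkeeping. The positivity results must produce admissibility on $[0,1]$ in exactly the sense used in Section \ref{sec:B_operators}, that is, boundedness of $\Phi_1$ on $\Cc([0,1];U)$, respectively of $\Psi_1$ into $\Ll^1([0,1];Y)$. Granting that, each case follows from a single citation.
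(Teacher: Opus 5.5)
Your proposal is the paper's proof. \cite[Prop.\ 3.2, Thm.\ 4.9]{aroraLimitcaseAdmissibilityPositive2025a} give $\Ll^1$- resp.\ $\Cc$-admissibility, and Theorem~\ref{thm:OrliczforCs}\,c) resp.\ Corollary~\ref{cor:reflexiveforB}, Theorem~\ref{thm:nikishinforB} and Corollary~\ref{cor:pelczynski_for_B} upgrade this to $\Ee_F$-admissibility.

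Your worry about the third case is justified, and your fallback of adding $X\not\supset c_0$ is the right fix. The paper cites Corollary~\ref{cor:pelczynski_for_B} without that hypothesis, and it cannot come from the order structure. It is in fact necessary. Take $X=U=c_0$, $A=\operatorname{diag}(-n)$ and the positive operator $B=\operatorname{diag}(n)$; note that $\Cc([0,1];c_0)$ has property (V). Then $\abs{(\Phi_1u)_n}\le\sup_s\abs{u_n(s)}\to0$, so $B$ is $\Cc$-admissible. But $\norm{\int_0^\epsilon T(\epsilon-s)Be_n\dd s}=1-\e^{-n\epsilon}$, so $B$ is not zero-class and hence not $\Ee_F$-admissible for any $F$.
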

\begin{proof}
    The assertion on observation operators follows from \cite[Prop.\ 3.2]{aroraLimitcaseAdmissibilityPositive2025a}, which states that an operator 
$C$ satisfying the above assumptions must be $\Ll^1$-admissible and \ref{thm:OrliczforCs},
which implies that $C$ is even $\Ee_F$-admissible for some Young function $F$.

For control operators, \cite[Thm.\ 4.9]{aroraLimitcaseAdmissibilityPositive2025a}
implies that any $B$ with the above properties is $\Cc$-admissible. The $\Ee_F$-admissibility
then follows from Corollary \ref{cor:reflexiveforB}, Theorem \ref{thm:nikishinforB}
or Corollary \ref{cor:pelczynski_for_B}, respectively.
\end{proof}
\subsection{Maximal regularity and the theorems of Guerre-Delabriere and Baillon}
The classical maximal regularity with respect to $\Cc$ or $\Ll^1$ is considered `exotic' by the dichotomy results due to Baillon \cite{baillonCaractere1980} and Guerre-Delabriere \cite{guerre-delabriere$L_p$regularityCauchyProblem1995}. Both results show that this type of maximal regularity can only occur if either the generator is bounded or the underlying Banach space contains a (complemented) copy of $c_{0}$ ($\ell^{1})$. Note that in PDE applications, the case of bounded generators is typically irrelevant.
The existing proof of Baillon's theorem---see Baillon's original article \cite[Theorème 1]{baillonCaractere1980} and compare with \cite[Thm.\ 17.4.4]{hytonenAnalysisBanachSpaces2023}, \cite[Thm.\ 0.5]{eberhardtBaillonsTheoremMaximal1992}---relies on a characterization of spaces containing $c_{0}$. More precisely, by assuming that $A$ is unbounded, one can construct a sequence which leads to a Schauder basis of an isomorphic copy of $c_{0}$, see also \cite[Prop.\ 2.5]{jacobRefinementBaillonTheorem2022}. 
On the other hand, Guerre-Delabriere's argument rests on a duality argument and, in turn, on Baillon's theorem. 

Using our results, we give
an alternative and direct proof of Guerre-Delabriere's result \cite[Prop.\ 1]{guerre-delabriere$L_p$regularityCauchyProblem1995} without making use of Baillon's
theorem.
\begin{theorem}[Guerre-Delabriere; {\cite[Prop.\ 1]{guerre-delabriere$L_p$regularityCauchyProblem1995}}]\label{thm:guerredelabriere}
    Let $X$ be a Banach space that does not contain $\ell^1$ as a
    complemented subspace. If $A$ has maximal $\Ll^1$-regularity, then $A$
    is bounded.
\end{theorem}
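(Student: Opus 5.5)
The plan is to extract from maximal $\Ll^1$-regularity an $\Ll^1$-admissible \emph{observation} operator on $W=X$, namely $C=A$ itself, and then apply Theorem~\ref{thm:OrliczforCs}\,\ref{item:Nocomplementedell1}. That result upgrades $\Ll^1$-admissibility to zero-class $\Ll^1$-admissibility. Zero-class admissibility of $A$ will force the semigroup to be norm continuous at $0$, and hence $A$ to be bounded.

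\emph{Step 1 ($A$ is an $\Ll^1$-admissible observation operator).} Maximal $\Ll^1$-regularity gives a constant $K$ with
\[
\Norm{A\int_0^{\,\cdot\,}T(\,\cdot\,-s)f(s)\dd s}_{\Ll^1([0,1];X)}\le K\norm{f}_{\Ll^1([0,1];X)} .
\]
For $x\in D(A)$ I test this with the approximate identities $f_n\coloneq n\mathbf 1_{[0,1/n]}x$, so that $\norm{f_n}_{\Ll^1}=\norm{x}$. For $t>1/n$ one computes
\[
A\int_0^tT(t-s)f_n(s)\dd s=n\bigl(T(t)x-T(t-1/n)x\bigr)=n\int_{t-1/n}^{t}T(r)Ax\dd r .
\]
For fixed $\epsilon>0$ this converges to $T(t)Ax=AT(t)x$ uniformly on $[\epsilon,1]$. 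Hence $\int_\epsilon^1\norm{AT(t)x}\dd t\le K\norm{x}$, and letting $\epsilon\searrow0$ gives
\[
\int_0^1\norm{AT(t)x}_X\dd t\le K\norm{x}_X\qquad\text{for }x\in D(A).
\]
So $C\coloneq A\in\LL(X_1,X)$ is an $\Ll^1$-admissible observation operator, and $\Psi_1\colon x\mapsto AT(\,\cdot\,)x$ extends to a bounded map $X\to\Ll^1([0,1];X)$. This approximation step is the only place where care is needed: one must check that the maximal regularity estimate transfers to the singular kernel without assuming any a priori extension. Working on $[\epsilon,1]$ and using the uniform convergence there handles this.

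\emph{Step 2 (zero-class).} Since $X$ contains no complemented copy of $\ell^1$, Proposition~\ref{prop:complementedforui} (equivalently Theorem~\ref{thm:OrliczforCs}\,\ref{item:Nocomplementedell1}) shows that $\Psi_1(\ball_X)$ is uniformly integrable in $\Ll^1([0,1];X)$. Applying uniform integrability with the sets $E_n=[0,\epsilon_n]$, $\epsilon_n\searrow0$, gives
\[
\kappa(\epsilon)\coloneq\sup_{x\in\ball_X\cap D(A)}\int_0^\epsilon\norm{AT(t)x}_X\dd t\longrightarrow0\qquad(\epsilon\searrow0).
\]

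\emph{Step 3 (conclusion).} For $x\in D(A)$ we have $T(\epsilon)x-x=\int_0^\epsilon AT(t)x\dd t$, and therefore $\norm{T(\epsilon)x-x}\le\kappa(\epsilon)\norm{x}$. By density of $D(A)$ this bound holds for all $x\in X$, so $\norm{T(\epsilon)-\id_X}_{\LL(X)}\le\kappa(\epsilon)\to0$. Thus $T$ is uniformly continuous, and its generator $A$ is bounded by the standard characterization of norm-continuous semigroups, e.g.\ \cite{engel_one-parameter_2000}. No appeal to Baillon's theorem is made.
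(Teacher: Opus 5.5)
Your proof is correct and follows the same route as the paper: show that $A$ is an $\Ll^1$-admissible observation operator, upgrade this to zero-class $\Ll^1$-admissibility via uniform integrability (since $X$ has no complemented $\ell^1$), and conclude that $A$ is bounded. The only difference is that you prove inline the two facts the paper cites. Your Step~1 replaces \cite[Thm.~3.6]{kaltonREMARKSL1MAXIMAL2008}, and your Step~3, which gives $\norm{T(\epsilon)-\id_X}\le\kappa(\epsilon)$, replaces \cite[Prop.~16]{jacobContinuitySolutionsParabolic2019}.
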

\begin{proof}
    Suppose $A$ has maximal $\Ll^1$-regularity. Due to \cite[Thm.~3.6]{kaltonREMARKSL1MAXIMAL2008}, see also \cite[Prop.\ 2.1]{jacobRefinementBaillonTheorem2022} $A$ is an $\Ll^1$-admissible observation operator for the
    semigroup generated by $A$. As $X$ does not contain a
    complemented subspace isomorphic to $\ell^1$, Theorem \ref{thm:OrliczforCs} implies that $A$ is zero-class $\Ll^1$-admissible.
    By \cite[Prop.\ 16]{jacobContinuitySolutionsParabolic2019}, $A$ is bounded.
\end{proof}
We can also prove a generalization of Baillon's result for structured continuous maximal regularity; however, we need a slightly stronger assumption on the Banach space $X$.

\begin{theorem}[Partial generalization of Baillon's result]
   \label{thm:Baillongen}  Let $A$ generate a strongly continuous semigroup on a Banach space $X$ such that  $X^{\odot\prime}$ contains no copy of $c_{0}$ and let $B_{0}\in \LL(U,X)$. If $A$ has  $B_{0}$-structured $\Cc$-maximal regularity, then $\lim_{\lambda\to\infty}\lambda R(\lambda,A)B_{0}=B_{0}$ in $\LL(U,X)$. 
\end{theorem}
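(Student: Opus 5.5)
The plan is to reduce the claim to zero-class $\Cc$-admissibility of $B \coloneq (\mu\id_X - A_{-1})B_0 \in \LL(U,X_{-1})$ for a fixed $\mu\in\rho(A)$, and to obtain that zero-class property from the Orlicz machinery of Section~\ref{sec:B_operators}. By the proposition relating structured $\Cc$-maximal regularity and $\Cc$-admissibility (Theorem~\ref{thm:structuredmaxreg}), the hypothesis says that $B$ is $\Cc$-admissible. Equivalently, $A_{-1}B_0$ is $\Cc$-admissible, since $\mu B_0$ is bounded into $X$ and hence trivially zero-class $\Cc$-admissible. Proposition~\ref{prop:sun_duality} then gives a bounded operator
\[
\tilde\Psi_1\colon X^\odot\to \Ll^1([0,1];U'),\qquad x^\odot\mapsto B'|_{(X^\odot)_1}T^\odot(\,\cdot\,)x^\odot .
\]

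The key step is to show that $\tilde\Psi_1(\ball_{X^\odot})$ is uniformly integrable. I will use Proposition~\ref{prop:complementedforui} with $W = X^\odot$, so it suffices to check that $X^\odot$ contains no complemented copy of $\ell^1$. Suppose, to the contrary, that $P$ is a bounded projection on $X^\odot$ whose range is isomorphic to $\ell^1$. Dualizing, $P'$ is a bounded projection on $X^{\odot\prime}$ whose range is isomorphic to $(\ell^1)'=\ell^\infty$. Since $\ell^\infty \supset c_0$, the space $X^{\odot\prime}$ would contain a copy of $c_0$, contradicting the hypothesis. Hence $\tilde\Psi_1(\ball_{X^\odot})$ is uniformly integrable. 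Theorem~\ref{thm:equi_for_B} (direction b) $\Rightarrow$ a)) then yields a Young function $F$ such that $B$ is $\Ee_F$-admissible. By the remark following that proposition (\cite[Proof of Prop.\ 5]{jacobContinuitySolutionsParabolic2019}), $B$ is then zero-class $\Cc$-admissible. Subtracting the trivially zero-class part $\mu B_0$, the operator $A_{-1}B_0$ is also zero-class $\Cc$-admissible, i.e.\ there is $\kappa(\epsilon)\to0$ as $\epsilon\searrow0$ with
\[
\Norm{\int_0^\epsilon T_{-1}(\epsilon-s)A_{-1}B_0u(s)\dd s}_X\le\kappa(\epsilon)\norm{u}_{\Cc([0,\epsilon];U)} .
\]

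Next I test this estimate with constant inputs $u\equiv v\in U$. The integral then equals $(T(\epsilon)-\id_X)B_0v$, so $\norm{(T(\epsilon)-\id_X)B_0}_{\LL(U,X)}\le\kappa(\epsilon)\to0$. It remains to pass from the semigroup to the resolvent. For $\lambda$ large, the Laplace transform representation gives
\[
\lambda R(\lambda,A)B_0-B_0=\int_0^\infty\lambda\e^{-\lambda t}\bigl(T(t)-\id_X\bigr)B_0\dd t .
\]
I split this integral at a small $\delta>0$. On $[0,\delta]$ the integrand has operator norm at most $\sup_{t\le\delta}\kappa(t)$, and the weights $\lambda\e^{-\lambda t}$ integrate to at most one. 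On $[\delta,\infty)$ I use the exponential bound $\norm{T(t)}\le M\e^{\omega t}$; this tail tends to zero as $\lambda\to\infty$ because $\lambda\e^{-(\lambda-\omega)\delta}\to0$. Letting first $\lambda\to\infty$ and then $\delta\searrow0$ gives convergence in $\LL(U,X)$.

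I expect the main obstacle to be bookkeeping rather than any single hard estimate. Most of the work is in the identifications: that structured maximal regularity really corresponds to $\Cc$-admissibility of $B$ with the correct sign and shift, that zero-class admissibility transfers between $B$ and $A_{-1}B_0$, and that the constant-input computation is legitimate in $X_{-1}$. The one conceptually delicate point is the duality step: one must ensure that a complemented copy of $\ell^1$ in $X^\odot$ really produces $c_0$ inside $X^{\odot\prime}$. This uses only the adjoint of the projection and $\ell^\infty\supset c_0$, so it should go through cleanly.
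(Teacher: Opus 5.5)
Your proposal is correct and follows essentially the paper's route. You go from $\Cc$-admissibility of $A_{-1}B_0$ (via Theorem~\ref{thm:structuredmaxreg}) through sun duality and the absence of a complemented $\ell^1$ in $X^\odot$ to uniform integrability (Proposition~\ref{prop:complementedforui}), then to zero-class $\Cc$-admissibility, and finish with the Laplace transform/resolvent step. The deviations are minor: you verify the easy direction of Bessaga--Pe\l czy\'nski directly by dualizing the projection, and you reach zero-class $\Cc$-admissibility via Theorem~\ref{thm:equi_for_B} and Orlicz-admissibility, rather than via zero-class $\Ll^1$-admissibility of $B_0'A^\odot$ and a zero-class sun-duality transfer.
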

\begin{proof}
Suppose now that $X^{\odot\prime}$ contains no copy of $c_{0}$. Thus, by \cite[Thm.\ 4]{Bessaga1958}, $X^\odot$ contains no complemented copy of $\ell^{1}$. 
By the sun duality Proposition \ref{prop:sun_duality} above, $B_{0}'A^{\odot}$ is an $\Ll^1$-admissible observation operator on $X^{\odot}$, which is zero-class since $X^{\odot}$ does not contain $\ell^{1}$ complementedly. By \cite[Thm.\ 3.1]{aroraAdmissibleOperatorsSundual2025}, $A_{-1}B_{0}$ is zero-class $C$-admissible. Using the relation between the Laplace transform and the resolvent, we conclude that 
$R(\lambda,A)A_{-1}B_{0}$ converges to $0$ in the operator topology as $\lambda\to\infty$. The resolvent identity shows the assertion.
\end{proof}
\begin{remark}
Some notes on the relation to Baillon's theorem are in order:
\begin{enumerate}
    \item Baillon's theorem states that if $X$ does not contain $c_{0}$ and $A$ has continuous maximal regularity, i.e., $B_0$-structured $C$-maximal regularity with $B_{0}=\id_X$, then $A$ is bounded. Since $X$ is canonically embedded in ${X^{\odot\prime}}$, see, e.g., \cite[7-8]{neervenAdjointSemigroupLinear1992},
    $X^{\odot\prime}$ contains $c_0$ if $X$ does. Thus, the assumption of $X^{\odot\prime}$ not containing $c_{0}$ is stronger than the one in Baillon's result.
    However, in the case $B_{0}=\id_X$, the assertion of our Theorem \ref{thm:Baillongen}, that is, $\lim_{\lambda\to\infty}\lambda R(\lambda,A)=\id_X$, implies that $A$ is a bounded operator. This follows by choosing $\lambda$ such that $\norm{\lambda R(\lambda,A)-\id_X}<1$ and a Neumann series argument.
    \item A result of Bessaga and Pełczyński \cite[Thm.\ 4]{Bessaga1958}, see also \cite[Prop.\ 2.e.8]{lindenstraussClassicalBanachSpaces1977} implies that $X^\odot$ contains $\ell^1$
   as a complemented subspace if and only if ${X^{\odot\prime}}$ contains $c_0$. 
    The Bourgain--Delbaen space $Y$, see \cite[Ch.\ 5]{bourgainClassSpecialLaSpaces1980}, cf.\ also \cite[Thm.\ 487]{guiraoRenormingsBanachSpaces2022},
    has the property that $Y\not\supset c_0$ but $Y' \simeq \ell^1$ such that $Y''\simeq \ell^\infty \supset c_0$. 
    However, we do not know if there is a (non-trivial) \czero-semigroup on $Y$ such that
    $Y^{\odot\prime}\supset c_0$. For example, $Y$ could have the Lotz property, i.e., 
    that every \czero-semigroup on $Y$ has a bounded generator; cf., \cite{vanneervenConverseLotz1992}. Lotz showed 
    in \cite[Thm.\ 3]{lotzUniformConvergenceOperators1985} that semigroups on Grothendieck spaces
    with the Dunford--Pettis property (see, e.g., \cite[208]{lotzUniformConvergenceOperators1985}) have bounded generators; whence such spaces are Lotz.
    Regarding the Bourgain--Delbaen space $Y$: it is known that $Y$ is not a Grothendieck space
    \cite[Ex.\ 4.3.1]{gonzalez_grothendieck_2021} although $Y$ has the Dunford--Pettis property \cite[155]{bourgainClassSpecialLaSpaces1980}. It is also known in the folklore
    that $Y$ is not a Banach lattice (see, e.g., \cite[46]{talagrandDualBanachLattices1981}), which means that the characterization
    of the Lotz property on Banach lattices \cite[Thm.\ 2]{vanneervenConverseLotz1992} cannot
    be used.
    \item We also note that Baillon's theorem for Banach spaces $X$ not containing $c_{0}$ cannot be proven in full generality with any argument
    that yields weak compactness of all operators $\Cc([0,1];X) \to X$. This is a result 
    of the known fact that every operator 
    $\Cc([0,1];X) \to X$ is weakly compact only if $X$ is reflexive, see, e.g., 
    \cite[Proof\ Thm.\ 1.6]{emmanueleBanachSpacesProperty1988}. Hence, potential proofs of Baillon's
    theorem using weak compactness must be adapted to the special structure of input maps $\Phi u=A\int_{0}^{1}T(1-s)u(s)\dd s$.
\item The assumption that $X^{\odot\prime}$ does not contain $c_{0}$ is satisfied in the following special situations:
    \begin{enumerate}
        \item $X^\odot$ does not contain $\ell^1$ as complemented subspace; 
        \item $X$ is reflexive;
        \item $X$ has nontrivial type;
        \item $X^{\odot\prime}$ has finite cotype;
        \item $X$ has finite cotype and $X^{\odot\prime} = X''$.
    \end{enumerate}
     By the second item in this remark, a) is equivalent to $X^{\odot\prime}$ containing $c_{0}$. Since nontrivial type of $X$ implies that $X'$ and hence $X^{\odot}$ have nontrivial type as well, $X^{\odot}$ does not contain a copy of $\ell^{1}$. 
     Maurey's characterization of finite cotype, see \cite{hytonenAnalysisBanachSpaces2018}, directly shows that d) implies that $X^{\odot\prime}$ does not contain $c_{0}$. 
     By \cite[Cor.\ 11.9]{diestelAbsolutelySummingOperators1995}, \cite[p.\ 95, (b)]{wojtaszczykBanachSpacesAnalysts1991}, the principle
    of local reflexivity implies that $\operatorname{cotype}(X) = \operatorname{cotype}(X'')$. Thus e) implies d).  
\end{enumerate}
    \end{remark}
\appendix
\section{Appendix}
\subsection{Pełczyński's property (V)}
It seems to be well-known in the
literature that property (V) transfers to complemented subspaces, cf., e.g., \cite{Kalton1985,Randrianantoanina1996}. For completeness, we give a proof:
\begin{lemma}\label{lem:compl(V)}
    If $X$ is a Banach space with Pełczyński's property (V) and $Y$ is a complemented subspace 
    of $X$, then $Y$ has Pełczyński's property (V).\label{lem:Vforcomplementedsubspaces}
\end{lemma}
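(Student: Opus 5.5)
Let $P\in\LL(X)$ be a projection with $\ran P=Y$, write $\iota\colon Y\to X$ for the inclusion, and let $Z$ be any Banach space with an unconditionally converging operator $S\in\LL(Y,Z)$. To show that $Y$ has property (V) we have to prove that $S$ is weakly compact.

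The natural step is to extend $S$ to all of $X$ by $\tilde S\coloneq SP\in\LL(X,Z)$. We first check that $\tilde S$ is unconditionally converging. Let $\sum_n x_n$ be a \wuc{} series in $X$. Then $\sum_n Px_n$ is \wuc{} in $Y$, because for every $y'\in Y'$ the functional $y'\circ P$ lies in $X'$, so $\sum_n y'(Px_n)=\sum_n (P'y')(x_n)$ converges unconditionally. Since $S$ is unconditionally converging, $\sum_n SPx_n=\sum_n\tilde S x_n$ converges unconditionally in $Z$. Thus $\tilde S$ is unconditionally converging, and property (V) of $X$ makes $\tilde S$ weakly compact.

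Finally we recover $S$ from $\tilde S$: since $P\iota=\id_Y$, we have $S=\tilde S\iota$. The operator $\iota$ is bounded, so $\iota(\ball_Y)$ is bounded in $X$. Hence $S(\ball_Y)\subset\norm{\iota}\,\tilde S(\ball_X)$ is relatively weakly compact; this is the ideal property of weakly compact operators. Therefore $S$ is weakly compact, and $Y$ has property (V).

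The only point that needs care is the transfer of the \wuc{} property through $P$. This is exactly where complementedness is used: without a bounded projection we could not extend $S$ to $X$. Once that is settled, the remaining steps are routine.
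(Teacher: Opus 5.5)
Your proposal is correct and follows essentially the same route as the paper: you extend $S$ to $SP$, show that $P$ maps weakly unconditionally Cauchy series in $X$ to weakly unconditionally Cauchy series in $Y$ (since $y'\circ P\in X'$), apply property (V) of $X$, and recover weak compactness of $S$ from $S(\ball_Y)=SP(\ball_Y)\subset SP(\ball_X)$. The paper's last step is the same, noting directly that $\tilde T(\ball_Y)=T(\ball_Y)$.
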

\begin{proof}
    Let $X$ have Pełczyński's property (V) and let $Y \subset X$ be a complemented subspace. Consider
    another Banach space $Z$ and a unconditionally converging operator $T\in\LL(Y,Z)$. To show
    that $Y$ has (V), we need to show that $T$ is weakly compact.

    Since $Y$ is complemented in $X$, there is a projection $P_Y\in\LL(X)$
    with $Y$ as its range. Hence $\tilde T\colon X \to Z$ , $x \mapsto TP_Y x$
        is a well-defined bounded operator. 
        
        Moreover, if $\sum x_n$ is a \wuc{} series in $X$, then, by definition, $\sum \abs{\spr{x_n}{x'}_{X,X'}}$ converges 
    for any $x' \in X'$. Then $\sum P_Y x_n$ is \wuc{} in $Y$: for any $y' \in Y'$ it follows that $y' \circ P_Y$ is an 
    element of $X'$. By the \wuc\ property of $\sum x_n$ it follows that $\sum\abs{\spr{x_n}{y'\circ P_Y}_{X,X'}} = \sum\abs{\spr{P_Yx_n}{y'}_{Y,Y'}}$ converges. Hence $\sum P_Y x_n$ is \wuc\ in $Y$.
    As $T$ is an \uc\ operator on $Y$, it follows that $\sum TP_Y x_n$ converges
    unconditionally in $Z$. Therefore the operator $\tilde T = TP_Y$ is \uc{} 

    Since $X$ has (V), $\tilde T$ is weakly compact. Hence
    $\tilde T(\ball_Y)$ is relatively weakly compact in $Z$. As $\tilde T(\ball_Y) = T(\ball_Y)$,
    $T$ is a weakly compact operator $T\colon Y \to Z$.
    Thus $Y$ has property (V).
\end{proof}
\begin{remark} \label{rem:Pelczynski_2}
    As remarked in 
    \cite[Proof Thm.\ 6]{Kalton1985} and \cite[Proof Thm.\ 1]{Randrianantoanina1996},
    a subspace isomorphic to $X$ is complemented in $\Cc([0,1];X)$; indeed, the projection 
    $f \mapsto f(0)\chi_{[0,1]}$ has range $\{ x\chi_{[0,1]} : x \in X\}\simeq X$.
    Thus, by Lemma \ref{lem:compl(V)},
    $X$ has property (V) if $\Cc([0,1];X)$ does. In particular, if $X \not\supset c_0$ and $\Cc([0,1];X)$
    has (V), by \cite[Thm.\ 8 (B)]{pelczynskiBanachSpacesWhich1962}, $X$ is reflexive.
\end{remark}
\subsection{Structured maximal regularity and admissibility}
The goal of this section is to show that the structured continuous maximal regularity
property from the introduction is indeed equivalent to $\Cc$-admissibility. The setting is as follows:
Let $A$ generate a \czero-semigroup $T$ on a Banach space $X$, and suppose without loss of generality that $0 \in \rho(A)$. Then let $U$ be another 
Banach space and $B_0 \in \LL(U,X)$ be an operator. Given $B_0$ and identifying $A$
with its extension to a generator on $X_{-1}$, we can ask 
whether $B \coloneq A B_0 \in \LL(U,X_{-1})$ is $\Cc$-admissible.

The equivalence of (classical) continuous maximal regularity and admissibility
of the generator is known in the literature; the statement follows from Travis' results in \cite[Lem.\ 3.1, Lem.\ 3.2, Prop.\ 3.1]{travisDifferentiabilityWeakSolutions1981}, where the result is 
not phrased using the terminology of admissible operators; in \cite[Prop.\ 2.2]{jacobRefinementBaillonTheorem2022} the authors note that the results in \cite{travisDifferentiabilityWeakSolutions1981} can be seen through the lens of admissibility theory, while proving that both statements are also equivalent to 
their respective versions for regulated functions.

We say that $A$ has $B_0$-structured $\Cc$-maximal regularity if 
\[
    \Phi\colon \Cc([0,1];U) \to \Cc([0,1];X),\qquad f\mapsto A\int_0^{\,\cdot\,} T(\,\cdot\, - s) B_0 f(s) \dd s 
\]
is a well-defined and bounded operator. It is clear that in the case $U = X$
and $B_0 = \id_X$ the structured maximal regularity reduces to classical
maximal regularity.

See, e.g., \cite[Def.\ 3.13]{kruseContinuousMaximalRegularity2025} where the authors discuss structured maximal regularity, in a more general context, leaving Banach spaces,
using the wording ``$(T(t))_{t\geq 0}$ satisfies $\Cc$-maximal regularity for 
$(B,r)$'', where $r>0$ is the length of the interval. In \cite[Cor.\ 5.4]{kruseContinuousMaximalRegularity2025} the authors show that admissibility of $B$
and structured maximal regularity of $B_0$ are equivalent under the assumption 
that $B_0$ commutes with the semigroup; we show that (in the Banach space case) 
the commutation condition can be removed, adapting Travis' proof to the structured
case.

The proofs in the literature use the concept of \emph{bounded semivariation} of operator-valued functions;
see, e.g., \cite[426]{travisDifferentiabilityWeakSolutions1981} for a definition. To 
show equivalence of $\Cc$-admissibility and maximal regularity, we will use the fact
that $\Cc$-admissibility of $B$ is equivalent to bounded semivariation of 
$T(\,\cdot\,)B_0$ by \cite[Prop.\ 2.1]{aroraAdmissibleOperatorsSundual2025}. 
Travis' proof also relies on Riemann--Stieltjes integrals with respect to operator-valued 
functions, see, e.g., \cite[Prop.\ 2.1]{hoenigGreenFunctionLateral1973}. See also the references cited in \cite{hoenigGreenFunctionLateral1973}, especially \cite[Sect.\ 4]{battDarstellungLinearerTransformationen1959}, where an introduction to Riemann--Stieltjes
integration with operator-valued functions can be found. Compare also \cite[Ch.\ 1.9]{arendtVector-valuedLaplaceTransforms2011}.
\begin{theorem}\label{thm:structuredmaxreg}
    Let $A$ generate a \czero-semigroup on the Banach space $X$ and let $\lambda \in \rho(A)$. If $U$ is another Banach space and $B_0 \in \LL(U,X)$, then $A$ has 
    $B_0$-structured $\Cc$-maximal regularity if and only if $B \coloneq (\lambda\id_X - A)B_0$
    is $\Cc$-admissible for $A$.
\end{theorem}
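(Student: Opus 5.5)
Assume without loss of generality $\lambda = 0 \in \rho(A)$ (replacing $A$ by $A-\lambda$ changes neither $\Cc$-admissibility nor structured maximal regularity, since $\Phi$ only changes by the bounded convolution $f \mapsto \lambda\int_0^{\cdot}T(\cdot-s)B_0f(s)\dd s$ and an exponential weight), so $B = -AB_0 = -A_{-1}B_0$. For $f\in\Cc([0,1];U)$ and $t\in[0,1]$ we then have
\[
(\Phi f)(t) = A_{-1}\int_0^t T_{-1}(t-s)B_0f(s)\dd s = -\int_0^t T_{-1}(t-s)Bf(s)\dd s ,
\]
where the integral is taken in $X_{-1}$. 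Thus $\Phi f(t)$ is, up to sign, the $X_{-1}$-valued mild solution at time $t$ of $\dot x = Ax+Bu$ with input $u=f$. The direction ``maximal regularity $\Rightarrow$ admissibility'' is then immediate. Evaluation at $t=1$ gives $\norm{\Phi_1 f}_X \le \norm{\Phi}\norm{f}_{\Cc}$, so $B$ is $\Cc$-admissible; this is the easy implication already noted as 3.$\Leftrightarrow$4.

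For the converse, assume $B$ is $\Cc$-admissible with constant $K$. I will show two things: that $\Phi f(t)\in X$ with a uniform bound, and that $t\mapsto \Phi f(t)$ is continuous in $X$.

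\emph{Uniform bound.} For $t\in(0,1]$, write $\int_0^t T(t-s)Bf(s)\dd s = \Phi_t f$. By a shift/extension argument, $\norm{\Phi_t}\le \norm{\Phi_1}$ up to a constant. Concretely, extend $f|_{[0,t]}$ to $\tilde f$ on $[0,1]$ by setting $\tilde f(s)=f(0)$ for $s\le 1-t$ and $\tilde f(s) = f(s-(1-t))$ otherwise; this keeps $\tilde f$ continuous with $\norm{\tilde f}_\infty=\norm{f}_\infty$. Then
\[
\Phi_1\tilde f = T(t)\,\Phi_{1-t}(f(0)\mathbf 1) + \Phi_t f .
\]
The first term is controlled using the constant input. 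Indeed, $\int_0^{r}T_{-1}(r-s)Bx\dd s = A_{-1}^{-1}(T(r)-I)Bx = (I-T(r))B_0x$, which is bounded by $(M+1)\norm{B_0}\norm{x}$, uniformly in $r$. This gives $\sup_t\norm{\Phi_t}<\infty$ and hence $\norm{\Phi f}_{\infty}\lesssim\norm f_\infty$.

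\emph{Continuity.} This is the main obstacle: one must show $t\mapsto\Phi_t f$ is continuous into $X$, not only into $X_{-1}$. I plan to use the characterization from \cite[Prop.\ 2.1]{aroraAdmissibleOperatorsSundual2025}: $\Cc$-admissibility is equivalent to admissibility for regulated functions, and to bounded semivariation of $s\mapsto T(s)B_0$. In the Riemann--Stieltjes language of Travis, this lets me write
\[
\Phi_t f = \int_0^t \mathrm d_s\bigl[T(t-s)B_0\bigr] f(s)
\]
as a Stieltjes integral with respect to a function of bounded semivariation.

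First take $f\in \Cc^1([0,1];U)$. Integration by parts gives
\[
\Phi_t f = T(t)B_0f(0) - B_0f(t) + \int_0^t T(t-s)B_0f'(s)\dd s
\]
up to sign conventions. Each term is manifestly continuous in $t$ with values in $X$. Then use density of $\Cc^1$ in $\Cc$ together with the uniform bound $\sup_t\norm{\Phi_t}<\infty$: $\Phi f$ is a uniform limit of continuous $X$-valued functions, hence lies in $\Cc([0,1];X)$.

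If the density step needs care because the bound is only known at each fixed $t$, note that this is precisely what $\sup_t\norm{\Phi_t}<\infty$ provides, so the uniform limit argument goes through. Combining both steps yields boundedness of $\Phi\colon\Cc([0,1];U)\to\Cc([0,1];X)$, which is $B_0$-structured $\Cc$-maximal regularity.
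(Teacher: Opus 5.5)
Your proof is correct, but it takes a genuinely different and more elementary route than the paper.

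\textbf{The paper's route.} It invokes \cite[Prop.~2.1]{aroraAdmissibleOperatorsSundual2025} to obtain bounded semivariation of $T(\,\cdot\,)B_0$. It then adapts Travis' Lemma~3.1 to write $A\int_0^t T(t-s)B_0f(s)\dd s$ as the Riemann--Stieltjes integral $-\int_0^t \dd_s T(t-s)B_0f(s)$. Right and left continuity in $t$ are proved separately. Left continuity needs a new splitting $K_1+K_2-K_3$, because $T$ itself need not have bounded semivariation. All bounds come from H\"onig's semivariation inequality.

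\textbf{Your route.} You use only the definition of $\Cc$-admissibility at the single time $1$. The constant pre-extension $\tilde f$, together with the explicit constant-input formula $\int_0^r T_{-1}(r-s)Bx\dd s=(I-T(r))B_0x$, shows that $\Phi_t f\in X$. It also gives $\sup_{t\in[0,1]}\norm{\Phi_t}\le\norm{\Phi_1}+M(M+1)\norm{B_0}$, with $M\coloneq\sup_{s\in[0,1]}\norm{T(s)}$. The integration-by-parts formula gives continuity for $f\in\Cc^1([0,1];U)$. Density of $\Cc^1$ then transfers continuity to all of $\Cc([0,1];U)$, because the uniform bound makes $\Phi f$ a uniform limit of continuous $X$-valued functions.

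\textbf{An unused detour.} The Stieltjes representation and the semivariation characterization you announce are never used. The integration by parts for $\Cc^1$ inputs is just the product rule for $s\mapsto T_{-1}(t-s)B_0f(s)$ in $X_{-1}$, with ordinary Bochner integrals. The density step rests entirely on your shift bound. You can drop that paragraph.

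\textbf{What each approach buys.} Yours is self-contained. It avoids operator-valued Stieltjes integration, the cited semivariation result, and the sign and term corrections to \cite{travisDifferentiabilityWeakSolutions1981} that the paper has to make. The paper's route gives the explicit Stieltjes formula for $A\int_0^t T(t-s)B_0f(s)\dd s$ and a bound in terms of the semivariation of $T(\,\cdot\,)B_0$. This ties the result to the regulated-function and semivariation characterizations of $\Cc$-admissibility used elsewhere in the paper.
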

\begin{proof}
We assume that $\lambda = 0$ without loss of generality, such that we can consider $B = AB_0$. Suppose $A$ has structured $\Cc$-maximal regularity with respect to $B_0$. Let $f \in 
\Cc([0,1];U)$. By
definition, 
\[
   \sup_{t\in[0,1]} \Norm{A\int_0^{t} T(t - s) B_0f(s) \dd s}_U \leq C \norm{f}_{\Cc([0,1];U)};
\]
it directly follows that
\[
    \Norm{\int_0^{1} T(t - s) AB_0f(s) \dd s}_U \leq \sup_{t\in[0,1]} \Norm{A\int_0^{t} T(t - s) B_0f(s) \dd s}_U \leq C \norm{f}_{\Cc([0,1];U)},
\]
which implies that $B = AB_0$ is $\Cc$-admissible for $A$.

For the converse, we use the fact that, by \cite[Prop.\ 2.1]{aroraAdmissibleOperatorsSundual2025},
$\Cc$-admissibility implies that $t \mapsto T(t)B_0$ has bounded semivariation on $[0,\tau]$ for
any fixed $\tau > 0$. To complete the proof, adapting the proofs of Lemmas 3.1 and 3.2 
in \cite{travisDifferentiabilityWeakSolutions1981} to the structured case, we 
show that structured maximal regularity can be recovered from bounded semivariation.

Once we replace $T(\,\cdot\,)$ by $T(\,\cdot\,)B_0$, Travis' proof of Lemma 3.1 can be carried over exactly and we get, for $0 \leq t \leq 1$ and $f\in\Cc([0,1];U)$,
\[
    A\int_0^t T(t-s)B_0f(s) \dd s = - \int_0^t \dd_s T(t-s)B_0f(s);
\]
in particular, the left hand integral lies in $D(A)$.
We note that there seems to be a sign mistake in this step in \cite{travisDifferentiabilityWeakSolutions1981}.

To show that the Riemann--Stieltjes integral is continuous in the parameter $t \in [0,1]$,
in Lemma 3.2, Travis considers left and right continuity separately.
For right continuity, once again, the argument applies almost verbatim to $T(\,\cdot\,)B_0$
in place of the semigroup $T$.

For completeness, we sketch the proof of right continuity in the structured case, following
the proof of \cite[Lem.\ 3.2]{travisDifferentiabilityWeakSolutions1981}.
Let $t \in [0,\tau)$ and $0 < h \ll 1$ small enough. We have
\begin{align*}
    &\int_0^{t+h} \dd_s T(t + h - s) B_0f(s) - \int_0^t \dd_s T(t-s) B_0 f(s) \\
    &= \int_0^t \dd_s T(t + h - s) B_0f(s) - \int_0^t \dd_s T(t-s) B_0 f(s) + \int_t^{t+h} \dd_s T(t + h - s) B_0f(s)\\
    &= \int_0^t [T(h) - \id_X] \dd_s T(t-s)B_0 f(s) + \int_t^{t+h} \dd_s T(t + h - s) B_0f(s)\\
    &= [T(h) - \id_X] \int_0^t \dd_s T(t-s)B_0 f(s) + \int_t^{t+h} \dd_s T(t + h - s) B_0f(s)\\
    &= [T(h) - \id_X] \int_0^t \dd_s T(t-s)B_0 f(s) + \int_0^{h} \dd_s T(s) B_0f(t + h - s)\\
    &\eqqcolon I_1 + I_2;
\end{align*} 
again, there seems to be a discrepancy between this expression and the analogous one given in 
\cite{travisDifferentiabilityWeakSolutions1981} (in \cite{travisDifferentiabilityWeakSolutions1981},
the term corresponding to $I_2$ is $\int_0^h \dd_s T(t-s) f(t+h-s)$). However, the argument given by Travis
still applies: $I_1$ tends to $0$ as $h \searrow 0$ due to strong continuity of 
$T$ and $I_2 \to 0$ as $h \searrow 0$ by \cite[Lem.\ 2.1]{travisDifferentiabilityWeakSolutions1981}. Hence $t \mapsto \int_0^t \dd_s T(t-s) B_0f(s)$
is right continuous.

For left continuity, we set up the integrals in the same way as Travis:
Let $t \in (0,\tau]$ and $0 < h \ll 1$, and consider
\begin{align*}
    &\int_0^t \dd_s T(t-s) B_0 f(s) - \int_0^{t-h} \dd_s T(t-h - s)B_0f(s)\\
    &= \int_0^{t-h} \dd_s T(t-s) B_0 f(s) + \int_{t-h}^t \dd_s T(t-s) B_0f(s)
    - \int_0^{t-h} \dd_s T(t-h - s)B_0f(s)\\
    &\eqqcolon J_1 + J_2 - J_3.
\end{align*}
The term $J_2$ can be seen to converge to $0$ as $h \searrow 0$ by \cite[Lem.\ 2.1]{travisDifferentiabilityWeakSolutions1981}, which is as in the non-structured case.

For $J_1 - J_3$, as $T(\,\cdot\,)$ does not necessarily have bounded semivariation, Travis' argument 
does not carry over to the structured case without additional assumptions such 
as $B$ commuting with $T(\,\cdot\,)$ or uniform continuity of the semigroup.
We therefore use the following alternative argument:
\begin{align*}
    &J_1 - J_3 = \int_0^{t-h} \dd_s T(t-s) B_0 f(s) - \int_0^{t-h} \dd_s T(t-h - s)B_0f(s)\\
    &= \int_0^{t-h} \dd_s T(t-s) B_0 f(s) - \int_{h}^{t} \dd_s T(t- s)B_0f(s-h)\\
    &= \int_0^{h} \dd_s T(t-s) B_0 f(s) +
    \int_h^{t-h} \dd_s T(t-s)B_0[f(s) - f(s-h)]- \int_{t-h}^{t} \dd_s T(t- s)B_0f(s-h)\\
    &\eqqcolon K_1 + K_2 - K_3.
\end{align*}
As before, $K_1 \to 0$ as $h\searrow0$ by \cite[Lem.\ 2.1]{travisDifferentiabilityWeakSolutions1981}; the same result implies
that 
\[
    K_3 = \int_0^h \dd_s T(h-s)B_0 f(s+t-2h) \to 0
\]
as $h \searrow 0$. It only remains to show $K_2$, the integral over an interval
of length $t - 2h \not\to 0$, also converges to zero.

By \cite[Prop.\ 1]{hoenigGreenFunctionLateral1973}, since $T(\,\cdot\,)B_0$ has
bounded semivariation on $[0,t]$, we have 
\[
    \norm{K_2}_X = \Norm{\int_h^{t-h} \dd_s T(t-s)B_0[f(s)- f(s-h)]}_X
    \leq \operatorname{SV}(T(\,\cdot\,)B_0)\norm{f(\,\cdot\,) - f(\,\cdot\, - h)}_{
    \Cc([h,t-h];U)}.
\]
Since $f$ is continuous on the compact interval $[0,t]$,
$f$ is also uniformly continuous there. Thus 
\[
    \norm{f(\,\cdot\,) - f(\,\cdot\, - h)}_{
    \Cc([h,t-h];U)} = \sup_{s\in[h,t-h]}\norm{f(s) - f(s - h)}_U \to 0
\]
as $h \to 0$. It follows that $K_2 \to 0$.

We conclude that $t \mapsto \int_0^t \dd_s T(t-s) B_0f(s)$ is also 
left continuous on $[0,\tau]$, such that $\Phi f = A\int_0^{\,\cdot\,} T(\,\cdot\, - s) B_0f(s)\dd s \in \Cc([0,\tau];X)$ for $f\in \Cc([0,\tau];U)$. Moreover, 
\[
    \norm{\Phi f}_{\Cc([0,1];X)} = \sup_{r\in[0,t]} \Norm{A\int_0^{r} T(r - s) B_0f(s)\dd s}_X = \sup_{r\in[0,t]} \Norm{\int_0^r \dd_s T(r-s)B_0f(s)}_X
\]
so, using the bounded semivariation of $T(\,\cdot\,)B_0$, the inequality from \cite[Prop.\ 1]{hoenigGreenFunctionLateral1973} yields
\[
    \sup_{r\in[0,t]} \Norm{\int_0^r \dd_s T(r-s)B_0f(s)}_X \leq
    \sup_{r\in[0,t]}\operatorname{SV}(T(r-\,\cdot\,)B_0) \norm{f}_{\Cc([0,r];U)}
    \leq C \norm{f}_{\Cc([0,t];U)}
\]
with some $C \geq 0$. Hence $A$ has $B_0$-structured maximal regularity.
\end{proof}
\printbibliography
\end{document}